\documentclass[a4paper]{amsart}

\pdfoutput=1

\usepackage[utf8]{inputenc}
\usepackage[T1]{fontenc}
\usepackage{lmodern}
\usepackage{color}
\usepackage{amsthm, amssymb, amsmath, amsfonts, mathrsfs, dsfont, esint}
\usepackage[colorlinks=true, pdfstartview=FitV, linkcolor=blue, citecolor=blue, urlcolor=blue,pagebackref=false]{hyperref}




\usepackage{microtype}


\newcommand{\ignore}[1]{}
\newcommand{\oh}{{\textstyle\frac{1}{2}}}
\newtheorem{definition}{Definition}

\newtheorem{theorem}{Theorem}
\newtheorem{remark}{Remark}
\newtheorem{lemma}{Lemma}
\newtheorem{corollary}{Corollary}
\providecommand{\dx}{\, \mathrm{d} x}

\renewcommand{\le}{\leqslant}
\renewcommand{\ge}{\geqslant}

\renewcommand{\subset}{\subseteq}

\numberwithin{equation}{section}

\title[Green's function for elliptic systems]{Green's function for elliptic systems: existence and \\
Delmotte-Deuschel bounds }
\author{Joseph G. Conlon, Arianna Giunti, Felix Otto}

\address[Joseph G. Conlon]{University of Michigan, Department of Mathematics, Ann Arbor, MI 48109-1109, USA}

\address[Arianna Giunti]{Max Planck Institute for Mathematics in the Sciences, Leipzig, Germany}

\address[Felix Otto]{Max Planck Institute for Mathematics in the Sciences, Leipzig, Germany}

\begin{document}
\begin{abstract}
 This paper is divided into two parts: in the main deterministic part, we prove that for an open domain $D \subset \mathbb{R}^d$ with $d \geq 2$, for every (measurable) uniformly elliptic tensor field $a$ and for almost every point $y \in D$, there exists a unique Green's function centred in $y$ associated to the vectorial
operator $-\nabla \cdot a\nabla $ in $D$. 
This result  implies  the existence of the fundamental solution for elliptic systems when  $d>2$, i.e. the Green function for $-\nabla \cdot a\nabla$ in $\mathbb{R}^d$.
In the second part, we introduce a shift-invariant ensemble $\langle\cdot \rangle$ over the set of uniformly elliptic tensor fields, and infer for the fundamental solution $G$ some pointwise bounds for $\langle |G(\cdot; x,y)|\rangle$, $\langle|\nabla_x G(\cdot; x,y)|\rangle$ and $\langle |\nabla_x\nabla_y G(\cdot; x,y)|\rangle$.
These estimates scale optimally in space and provide a generalisation to systems of the bounds obtained by Delmotte and Deuschel for the scalar case.
\end{abstract}
\maketitle
\section{Introduction}
In this work we shall be concerned with the study of the Green function for the second order vectorial operator in divergence form $-\nabla \cdot a \nabla$, on a general open domain $D \subset \mathbb{R}^d$ with $d \geq 2$.  
More precisely, let $G_D(a;\cdot ,y), \ y \in D$ denote the Green function centred in $y$ and corresponding to $-\nabla \cdot a\nabla$ in $D$.
We assume that $a$ is \textit{any} measurable uniformly elliptic tensor field on $\mathbb{R}^d$: Our notion of ellipticity is slightly weaker than the more standard strong ellipticity, and this allows us to   include
the setting of linearised elasticity. We also need to impose  an extra condition on the domain $D$ in the case $d=2$, namely that it has at least one bounded direction. 

The paper is structured in two parts. In the first \textit{deterministic} part we provide an existence and uniqueness result for Green functions. That is we prove  for every $a$ and almost every $y \in D$ 
the function $G_D(a; \cdot , y)$ exists (in fact, in the case of systems it is a tensor field). 
In the case $d>2$ this implies the existence of the full-space Green function, i.e. of $G(a;\cdot,y)=G_{\mathbb{R}^d}(a;\cdot,y)$. In the second \textit{stochastic} part we introduce a shift-invariant probability measure 
on the coefficient fields $a$ (or, equivalently, an ensemble $\langle \cdot \rangle$), and consider when $d > 2$ the random field given by $G(a; \cdot, y)$. In this random setting we establish for $G$ optimal pointwise moment bounds:
If $\langle\cdot\rangle$ denotes expectation with respect to the ensemble  and $\lambda$ is the ellipticity ratio of $a$, we prove that
\begin{align}\label{DD1}
\langle |G(\cdot;x,y)|\rangle \leq \frac{C\mbox{\scriptsize $(\lambda, d)$}}{|x-y|^{d-2}},
\end{align}
with similar estimates for $\nabla_x G$, and  $\nabla_x\nabla_yG$.

In the scalar case it is a well-known result (see e.g. Gr\"uter and Widman \cite{gw},  Littman, Stampacchia, and Weinberger \cite{lsw}) that for any measurable and strongly uniformly elliptic $a$, the Green function exists and 
has optimal pointwise decay, e.g. as the Green function associated to the Laplacian (c.f. also the r.h.s. in \eqref{DD1}). This bound on the decay is a consequence of the De Giorgi-Nash-Moser theory, which does not hold in the case of systems. 
Moreover, when working with systems the existence of a Green's function is itself not ensured  for this class of (possibly very rough) coefficient fields: A famous example of De Giorgi \cite{DeG}, disproving both the Liouville property and the $C^{\alpha}$-regularity theory for $a$-harmonic functions, also implies that there are strongly elliptic tensor fields and points $y \in \mathbb{R}^d$ for which a Green's function centred in $y$ does not exist.

Under additional regularity assumptions on the coefficient fields and/or on the domain $D$, the existence of the Green function has been proved also for systems: For any bounded and $C^1$ domain $D$, Fuchs \cite{f} establishes
existence of the Green function for strongly elliptic continuous coefficient fields $a$, and optimal pointwise bounds under the stronger assumption of H\"older continuity of $a$. Subsequently, Dolzmann and M\"uller \cite{dm}
improve the previous result by obtaining for continuous $a$ not only the existence of the Green function, but also optimal decay properties. 
In a series of works, Hoffman and Kim \cite{hk} and Kim and collaborators
(see e.g. \cite{kk} and \cite{cdk}) considerably weaken the assumptions on the domain $D$ and on the regularity of $a$ ( both in the elliptic and in the corresponding parabolic setting): In \cite{hk}, they establish the existence of the Green
function  for an arbitrary open domain $D \subset \mathbb{R}^d$ with $d > 2$, provided that the coefficient field is such that $a$-harmonic functions satisfy an interior H\"older continuity estimate (e.g. if $a$ is VMO).
In \cite{kk}, Kang and Kim (see also Cho, Dong and Kim \cite{cdk} for the case $d=2$) further develop the previous theory and in addition provide a necessary and sufficient condition on $a$ in order to have for the Green function
an optimal pointwise bound. We also mention that a result similar to \cite{kk} has been proved by Auscher and Tchamitchian \cite{at} in the parabolic case via the introduction of a criterion (the Dirichlet Property (D)) for the parabolic
Green function to have Gaussian bounds.

In this paper we adopt a different approach: Instead of restricting the class of coefficient fields $a$ by further regularity assumptions, we show that the 
"bad'' cases as the one of De Giorgi's example are \textit{exceptional} for any coefficient field $a$. The main idea consists of extending our definition of the Green function to a two-variable object $G_D(a; \cdot , \cdot)$ 
which solves the equation almost surely in $y$: With this understanding, we may establish  $L^2$ a-priori bounds in $(x,y)$ on  the gradients $\nabla_x G$, $\nabla_y G$ and the mixed derivatives $\nabla_x\nabla_yG$.
By an approximation procedure, we then obtain the existence statement.

The optimal stochastic estimates (c.f. \eqref{DD1} ), obtained in the second part of this work, extend the inequalities established by Delmotte and Deuschel \cite{dd} for scalar equations to elliptic systems:
Their methodology relies on the theory of De Giorgi, Nash and Moser for uniformly elliptic and parabolic PDEs in divergence form and therefore does not generalise to elliptic systems.
Stochastic estimates on the (whole space) Green function have been largely used in the context of stochastic homogenization for elliptic PDE's in divergence form, in particular to quantify the decay of the \textit{homogenisation error},
i.e. the difference between the solution of the heterogeneous equation and the solution of the homogenised equation. 
Inspired by the work of Naddaf and Spencer \cite{ns2} on Gradient Gibbs Measures, the third author and Gloria and the third author, Gloria and Neukamm (see e.g. \cite{go1}, \cite{GNO}) provide optimal estimates for the fluctuations (variance) of the corrector
by controlling the decay of the moments of the gradients and mixed derivatives Green function, i.e. $\langle |\nabla_x G(\cdot ; x, y)|^{2p} \rangle^{\frac 1 p}$ and $\langle |\nabla_x\nabla_y G(\cdot ; x, y)|^{2p} \rangle^{\frac 1 p}$ for $p \geq 1$.
There, an important role is played by the assumption on the ensemble of coefficient fields to satisfy a quantification of ergodicity in the form of a Spectral Gap.
In \cite{MO}, Marahrens and the third author rely on Logarithmic Sobolev's inequalities to upgrade the bounds obtained by Delmotte and Deutschel for any moment of $G$, $\nabla_x G$ and $\nabla_x\nabla_y G$
and infer optimal estimates on the fluctuation of the homogenisation error.
The result of this paper should therefore allow to obtain the optimal quantitative results cited above also in the case of stochastic homogenisation of elliptic systems. 
We also mention that in \cite{BG}, Bella and the second author upgraded as well $\eqref{DD1}$ to a bound for any moment in probability of $G$, its gradient and its mixed derivatives.

Estimates \eqref{DD1} immediately imply optimal decay bounds for the averaged Green function $\langle G( a; \cdot, \cdot) \rangle$.
It is an interesting exercise to compare the methodology used in the present paper with the methodology used by the first author and Naddaf \cite{cn2} to prove in the scalar case pointwise estimates on the averaged Green function.
and their derivatives. While in this work we infer the stochastic bounds on the Green function directly from the deterministic existence result for $G(a; \cdot, \cdot)$, in \cite{cn2} a major part is played by the Fourier representation 
of the averaged Green function, which is a generalisation of the Fourier representation of the Green function for an elliptic PDE with constant coefficients. 
Naddaf and the first author then obtain Fourier space estimates strong enough to imply the pointwise estimates on averaged Green functions. We remark that the method in \cite{cn2} does not make use of the scalar structure and therefore may be
applied also to obtain a-priori estimates in the vectorial case. In the last section of this paper we indeed summarise how our main estimate in the proof of \eqref{DD1} can be obtained using this Fourier method.

We conclude this introduction by remarking that the issue of the regularity of averaged Green functions for PDEs with random coefficients plays an important role in statistical mechanics (in fact, \cite{dd} belongs itself to this context).
In particular, it appears to have first come up in the study of the equilibrium statistical mechanics of the Coulomb dipole gas.
Correlation inequalities for the dipole gas on the integer lattice were first obtained by Gawedzki  and Kupiainen \cite {gk} by means of a complicated multi-scale induction argument known as the renormalisation group method
\cite{br2}.  A major drawback to the implementation of the renormalisation group method is that it always requires  smallness in some parameters.
In the case of the dipole gas this implies that the density of the gas must be extremely small, and with no reasonable estimate on how large the density is allowed to be. In \cite{ns} Naddaf and Spencer pioneered an alternative
approach to estimating correlation functions for the dipole gas which was based on convexity theory.  Their starting point was the observation that a  correlation function closely related to the charge-charge correlation function
for the dipole gas is equal to the integral over time of an averaged Green function for a  {\it parabolic} PDE in divergence form with random coefficients.  
One of the main results of \cite{gk}  follows immediately from  this identity by application of a discrete version of the Aronson bounds  \cite{aronson}  for Green  functions of parabolic PDE. In addition, the argument applies for gases with density of order  $1$.
The Aronson bounds  make use of the main ingredient of Nash's argument for the De Giorgi-Nash-Moser theory, and are thus restricted to the scalar setting.  

An important intuition in the study of the Coulomb gas is the notion of {\it screening}. That is the interaction between two particles of the gas is decreased by the presence of the other particles.  
In the case of two dipoles centred at $x,y\in\mathbb{R}^d$ the interaction  behaves like $1/|x-y|^d$ at large distances.  Hence one expects that for a dilute Coulomb gas of dipoles the charge-charge correlation between two dipoles
at $x,y$ also behaves like $1/|x-y|^d$ at large distances.  In the Naddaf-Spencer representation the charge-charge correlation function is approximately given by the averaged second derivative $\nabla_x\nabla_yG$ of
the Green function evaluated at $(x,y)$.  Hence one is motivated to expect pointwise bounds on averages of second derivatives of Green functions for certain parabolic PDE in divergence form with random coefficients,
a conjecture formulated by Spencer \cite{sp} and proven in \cite{dd}.

\section{Notation and setting}\label{Not}
In this section we introduce the elliptic systems' setting we are interested in and the notion of associated Green's function for a general open domain $D\subset \mathbb{R}^d$, $d\geq 2$. In particular, we want to justify the scalar notation which is used throughout the paper.
In order to do so, we first introduce our problem in a more general setting: In the rest of this section we consider a Hilbert space $Y$ with dim$Y:=m<\infty$.
We denote by $z y$ and $z\cdot y$ respectively the inner product in Y and the natural one induced over $Y^d$.
In the same spirit, we write $|z|= (z z)^{\frac{1}{2}}$ and $|y|=(y\cdot y)^{\frac{1}{2}}$ for $z\in Y$ and $y\in Y^d$.

\bigskip

{\bf Coefficient field.}\ \ A coefficient field $a$ is a map
\begin{align}
a\ : \ \ &\mathbb{R}^d\rightarrow\nolinebreak\mathcal{L}(Y^d,Y^d)\notag\\
&x\mapsto a(x): Y^d\rightarrow Y^d.\notag
\end{align}
Let $\Omega$ be the set of all symmetric and elliptic coefficient fields, i.e. all maps $a$ such that
\begin{equation}\label{Sym}
 \forall x\in \mathbb{R}^d,\ \forall \ \xi, \xi'\in Y^d \ \ \xi\cdot a(x)\xi' = \xi'\cdot a(x)\xi ,
\end{equation}
\begin{equation}\label{bdd}
 \forall x\in \mathbb{R}^d,\ \forall \ \xi\in Y^d \ \ \ |a(x)\xi|\leq |\xi| .
\end{equation}
\begin{equation}\label{KC}
\exists \lambda\in(0,1) \ : \ \  \forall \ \zeta\in C^{\infty}_0(\mathbb{R}^d,Y) \ \ \ \int \nabla{\zeta}\cdot a\nabla{\zeta}\geq \lambda\int |\nabla{\zeta}|^2,
\end{equation}
We stress that (\ref{KC}) is a weaker condition than the uniform ellipticity assumption
\begin{equation} \label{StE}
\exists \lambda\in(0,1)  \ : \ \ \forall \ a.e.\ \  x\in X, \ \forall \ \xi\in Y^d \ \ \ \xi\cdot a(x)\xi \geq \lambda |\xi|^2 \  ,
\end{equation}
and it includes a larger class of elliptic systems to which belongs also the case of linearised elasticity. 
In this paper we mainly consider coefficient fields $a \in \Omega$, thus elliptic in the more general sense (\ref{KC}).

\medskip

{\bf Generalised Green's function.} For an open domain $D\subset \mathbb{R}^d$ with $ d \geq 2$ and a given $a\in\Omega$, we refer to the map $G_D(a;\cdot,\cdot) :\mathbb{R}^d\times \mathbb{R}^d \rightarrow \mathcal{L}(Y,Y)$ as a \textit{Green's function},
if there exists an $\alpha \in (0, d)$\footnote{In fact, we show that such $\alpha$ can be chosen to be any $\alpha \in (d-2 ,d)$ and thus be optimal in the sense that it corresponds to the constant coefficients case.} and $R>0$ such that for every $z\in \mathbb{Z}^d$
\begin{align}
\int_{|y-z|< 2R}\int_{|x-z|< 2R} |x-y|^{\alpha} |\nabla G_D(a; x, y)|^2 < +\infty,\label{WB} \\
\int_{|y-z|< R}\int_{|x-z|> 2R} |\nabla G_D(a; x, y)|^2 < +\infty, \label{WB2}
\end{align}
and for almost every $y\in \mathbb{R}^d$ the application $G(a, \cdot, y)$ satisfies
\begin{align}\label{TG}
\begin{cases}
 -\nabla\cdot a\nabla G_D(a;\cdot, y) =\delta(\cdot-y) \mbox{ \hspace{0.5cm} in $D$}\\
 \ \ G_D(a;\cdot , y) = 0  \hspace{2.76cm} \mbox{in $\partial D$},
\end{cases}
\end{align}
in the sense that $G_D(a; \cdot , y) = 0$  almost everywhere outside $D$ or vanishes at infinity for $D=\mathbb{R}^d$, and that for any $\xi\in Y$, $|\xi|=1$ it holds for every $\zeta\in C^\infty_0(D; Y)$
\begin{equation}\label{G}
\int \nabla\zeta(x)\cdot a(x)\nabla\bigl(G(a;x, y)\xi\bigr) = \zeta(y)\xi.
\end{equation}
We note that if we obtain estimates over $G(a;\cdot,y)\xi$, independent of $\xi$, then we automatically deduce the same bounds for $G(a; \cdot, y)$ itself.
Therefore, as long as we estimate uniformly in $\xi$, instead of (\ref{G}) we can adopt the formal notation
\begin{align}\label{E}
\int \nabla\zeta(x)\cdot a(x)\nabla G(a;x, y) = \zeta(y).
\end{align}
Given $G(a; \cdot, \cdot)$ as defined before, we may also consider $\nabla_y G_{D}(a; \cdot , y)$ which, for almost every $y\in \mathbb{R}^d$, is formally a solution (with the same understanding of (\ref{TG}) ) of 
\begin{align}\label{TGy}
\begin{cases}
 -\nabla\cdot a\nabla \nabla_y G_{D}(a; \cdot , y)= \nabla_y\delta(\cdot-y) \mbox{\ \ \hspace{0.8cm}  in D}\\
 \ \ \nabla_y G_{D}(a; \cdot , y) = 0  \ \mbox{ \hspace{3.3cm} in $\partial D$,}
\end{cases}
\end{align}
where the distribution $\nabla_y\delta(\cdot-y)$ acts on any $\zeta \in C^\infty_0 (D)$ as
\begin{align*}
\int \nabla_y\delta(x-y) \zeta(x) = \nabla \zeta (y).
\end{align*}
Throughout the rest of the paper we fix $Y=\mathbb{R}^m$ with the canonical inner product and use the previous scalar-like notation.
When no ambiguity occurs, we write $\nabla G_D$, $\nabla\nabla G_D$ for the gradient $\nabla_xG(a;x,y)$ and the mixed derivatives $\nabla_x\nabla_yG_D(a;x,y)$. In the case $D=\mathbb{R}^d$, we also use the notation $G(a; \cdot, \cdot)=G_{\mathbb{R}^d}(a; \cdot, \cdot)$. 
In the estimates carried out in this paper,  $\lesssim $ stands for $\leq C$ with a constant depending exclusively on the dimension d and the ellipticity ratio $\lambda$ and thus in particular independent of the choice of the domain $D$; 
similarly, $\lesssim_D $ stands for $\leq C$ with $C$ depending on $d$, $\lambda$ and also on the domain $D$: Unless stated otherwise, the dependence of $C$ on the domain is merely through the size of the smallest bounded direction of $D$.

\medskip

We remark that our definition of Green's function guarantees that for every coefficient field $a\in \Omega$ and for every open domain $D \subset \mathbb{R}^d$ with $d \geq 2$, $G_D(a; \cdot ,\cdot)$  is \textit{unique}. 
More precisely, we have the following
\begin{lemma}\label{U}
Let $a \in \Omega$ and let $D$ be an open domain in  $\mathbb{R}^d$, with $d \geq 2$. Then, $G_D(a, \cdot, \cdot)$ is unique (in the sense of $L^1_{loc}(\mathbb{R}^d \times \mathbb{R}^d))$. 
\end{lemma}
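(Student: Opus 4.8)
Suppose that $G^{(1)}_D := G_D(a;\cdot,\cdot)$ and $G^{(2)}_D$ both satisfy the defining properties \eqref{WB}--\eqref{G}, and set $H := G^{(1)}_D - G^{(2)}_D$; the plan is to show $H = 0$ in $L^1_{loc}(\mathbb{R}^d\times\mathbb{R}^d)$. First I would record the properties inherited by $H$. With $\alpha$ taken to be the larger of the two exponents (still in $(0,d)$), $H$ satisfies \eqref{WB} and \eqref{WB2}; since the right-hand side of \eqref{G} is common to $G^{(1)}_D$ and $G^{(2)}_D$, for a.e.\ $y$ the field $u := H(\cdot,y)\xi$ (any fixed $\xi\in Y$, $|\xi|=1$) is $a$-harmonic \emph{across} $y$, i.e.\ $\int\nabla\zeta\cdot a\nabla u = 0$ for all $\zeta\in C^\infty_0(D;Y)$, and $u = 0$ a.e.\ outside $D$ (resp.\ vanishes at infinity if $D=\mathbb{R}^d$). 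By Fubini, \eqref{WB} gives for a.e.\ $y$ that $\nabla u\in L^1_{loc}(\mathbb{R}^d)$ — the Cauchy--Schwarz bound $\int_B|\nabla u|\le\bigl(\int_B|x-y|^{-\alpha}\bigr)^{1/2}\bigl(\int_B|x-y|^{\alpha}|\nabla u|^2\bigr)^{1/2}$ uses $\alpha<d$ — while \eqref{WB} and \eqref{WB2} together give $\nabla u\in L^2(\{|x-y|>\rho\})$ for every $\rho>0$. It is enough to prove that each such $u$ is identically zero, and for this it suffices to prove $\nabla u\equiv 0$, since then $u$ is locally constant on $D$ and the constant is $0$ by the condition outside $D$ (or at infinity).

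To get $\nabla u\equiv 0$ I would run the energy argument. Granting for the moment that $u\in W^{1,2}_{loc}(D)$ (so that, together with \eqref{WB2}, in fact $\nabla u\in L^2(D)$), fix cutoffs $\eta_R$ equal to $1$ on $B_R$, supported in $B_{2R}$, and adapted to $\partial D$, so that $\eta_R u\in H^1_0(D)$ with compact support. Testing $\int\nabla\zeta\cdot a\nabla u = 0$ with $\zeta=\eta_R u$ and writing $\nabla(\eta_R u) = \nabla u + \nabla((\eta_R-1)u)$ in the first slot gives
\begin{align*}
\int\nabla(\eta_R u)\cdot a\nabla(\eta_R u) = \int\nabla(\eta_R u)\cdot a\nabla\bigl((\eta_R-1)u\bigr),
\end{align*}
whence, by \eqref{KC} applied to the full test function $\eta_R u$ and by \eqref{bdd},
\begin{align*}
\lambda\,\|\nabla(\eta_R u)\|_{L^2}^2 \le \|\nabla(\eta_R u)\|_{L^2}\,\|\nabla((\eta_R-1)u)\|_{L^2}.
\end{align*}
Since $(\eta_R-1)u$ is supported where $\eta_R\ne 1$, the right factor is controlled by $\|\nabla u\|_{L^2(B_{2R}\setminus B_R)}+R^{-1}\|u\|_{L^2(B_{2R}\setminus B_R)}$, and both pieces tend to $0$ as $R\to\infty$: the first by \eqref{WB2}, the second by a Sobolev inequality on the dyadic shell $B_{2R}\setminus B_R$ (whose scaling precisely absorbs the factor $R^{-1}$) together with the vanishing of $u$ at infinity — read, given $\nabla u\in L^2$ near infinity, as $u\in L^{2^*}$ near infinity when $D=\mathbb{R}^d$, while for $D$ with a bounded direction $u\in L^2(\mathbb{R}^d)$ already by Poincar\'e and the argument is shorter. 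Hence $\|\nabla(\eta_R u)\|_{L^2}\to 0$, and since $\eta_R\nabla u\to\nabla u$ in $L^2(D)$ this forces $\nabla u = 0$.

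The step I expect to be the main obstacle is precisely the granted one: showing that $u = H(\cdot,y)\xi$ lies in $W^{1,2}_{loc}(D)$, i.e.\ that the \emph{a priori} possible singularity of $u$ at $y$ is removable. The difficulty is that \eqref{WB} only furnishes $\nabla u\in L^2_{loc}(D\setminus\{y\})$ and the weighted bound $\int_B|x-y|^{\alpha}|\nabla u|^2<\infty$ near $y$, and since $\alpha$ is merely required to lie in $(0,d)$ this is still compatible with a fundamental-solution-type behaviour $u\sim|x-y|^{2-d}$; removability must therefore come from the equation $-\nabla\cdot a\nabla u = 0$ in $\mathcal{D}'(D)$, not from the integrability alone. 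In the scalar case this is immediate (Weyl's lemma / the De Giorgi--Nash regularity theory), but for systems it has to be done by hand. The route I would try: interpolating $\nabla u\in L^2(B\setminus B_\rho(y))$ against the weight $|x-y|^{\alpha}$ via H\"older shows $\nabla u\in L^q_{loc}(B)$ for some $q>1$ (again using $\alpha<d$), so $u$ is a \emph{very weak} solution of the system on $B$; one then upgrades $u$ to $W^{1,2}_{loc}(B)$ by a self-improvement-of-integrability argument and/or by Widman's hole-filling on the dyadic annuli $B_{2^{-j}R}(y)\setminus B_{2^{-j-1}R}(y)$, using the Caccioppoli inequality $\lambda\int|\nabla(\eta(u-c))|^2\le\int(u-c)^2|\nabla\eta|^2$ — which holds for systems by testing the equation with $\eta^2(u-c)$ and invoking \eqref{Sym}, \eqref{bdd}, \eqref{KC} — and controlling the shells around $y$ by \eqref{WB}. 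This removability step is the technical heart of the lemma; once it is in place, the energy argument above finishes the proof.
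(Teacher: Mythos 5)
The overall frame of your argument (energy estimate on the difference, with the tail contribution killed by \eqref{WB2}) is reasonable, but the step you yourself isolate --- upgrading $u=H(\cdot,y)\xi$ for a fixed $y$ from the weighted bound $\int_B|x-y|^{\alpha}|\nabla u|^2<\infty$ to $u\in W^{1,2}_{loc}$ across the point $x=y$ --- is a genuine gap, and the route you sketch cannot close it. For a single fixed $y$, all the definition gives you is that $u$ solves $-\nabla\cdot a\nabla u=0$ distributionally in $D$ with $\nabla u\in L^q_{loc}$ for some $q$ only slightly above $1$ (H\"older against the weight gives $q<2d/(d+\alpha)$, and $\alpha$ may be arbitrarily close to $d$). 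For merely measurable coefficients --- and here $a$ is not even assumed strongly elliptic, only \eqref{KC} --- there is no self-improvement from very weak $W^{1,q}$, $q<2$, solutions to $W^{1,2}_{loc}$: this is exactly the regime of Serrin's pathological solutions, which exist already for scalar uniformly elliptic equations with measurable coefficients, and matters are worse for systems (De Giorgi's example). The hole-filling/Caccioppoli inequality you invoke is moreover circular, since testing with $\eta^2(u-c)$ is only admissible once $u\in W^{1,2}_{loc}$ is already known. So the removability step is not a technicality to be filled in later; along the proposed lines it fails in general, and nothing in the definition of the Green function gives you, for an individual $y$, more than the weighted bound.

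The paper avoids this issue entirely by exploiting that uniqueness is only claimed for the two-variable object in $L^1_{loc}(\mathbb{R}^d\times\mathbb{R}^d)$: it mollifies the difference in the singular variable, setting $\hat u(x,y)=\int\rho(y-y')\,\bigl(G^{(1)}_D-G^{(2)}_D\bigr)(x,y')\,dy'$ with $\rho\in C^\infty_0(\{|y|<1\})$. Then $\hat u(\cdot,y)$ is $a$-harmonic in all of $D$, vanishes outside $D$, and --- this is precisely where $\alpha<d$ enters --- has finite global Dirichlet energy: smuggling the weight $(|x-y'|^{\alpha}\wedge 1)^{1/2}$ into the $y'$-integral and applying Cauchy--Schwarz, the singularity only appears through $\int_{|y'-y|<1}(|x-y'|^{-\alpha}\vee 1)\,dy'\lesssim 1$, and the remaining factor is controlled by \eqref{WB} together with \eqref{WB2}. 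Testing the equation with $\hat u(\cdot,y)$ itself and using \eqref{KC} then gives $\hat u\equiv 0$, and the arbitrariness of $\rho$ and of the test function yields $G^{(1)}_D=G^{(2)}_D$ almost everywhere. If you want to keep your structure, replace the per-$y$ removable-singularity step by this averaging in the $y$-variable --- prove finite energy for the mollified difference and run your energy argument on it --- rather than attempting a regularity upgrade that measurable-coefficient systems do not admit.
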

The proof of this result in the appendix is very similar to \cite{GO}, Subsection A.3, Step 4.

\bigskip

{\bf Random coefficient fields.}\ \ We restrict our definition of $\Omega$ as
\begin{align*}
\Omega:= \{\mbox{ measurable \ \ } a: \mathbb{R}^d\rightarrow \mathcal{L}(Y^d,Y^d), \mbox{ satisfying (\ref{Sym}), (\ref{KC}) and (\ref{bdd}) }  \},
\end{align*}
where the measurability is considered with respect to the coarsest $\sigma$-algebra $\mathcal{F}$ such that $\forall \xi\in Y^d$, $|\xi|=1$, the evaluation
\begin{align}\label{F}
a \rightarrow \int  ( a(x)\xi ) \chi(x) dx 
\end{align}
is measurable for every $\chi \in C^\infty_0(\mathbb{R}^d)$ ( where $\mathbb{R}$ is equipped with the usual Borel $\sigma$-algebra). 
We define a random coefficient field by endowing the couple $(\Omega, \mathcal{F})$ with a probability measure $\mathds{P}$, or equivalently by considering an {\it ensemble} $\langle\cdot\rangle$ over symmetric, uniformly elliptic coefficient fields $a$.
We assume the ensemble $\langle \cdot \rangle$ to be {\it stationary}, namely that $\forall z\in \mathbb{R}^d$ the coefficient fields $\{\mathbb{R}^d\ni x\rightarrow a(x)\}$ and $\{\mathbb{R}^d\ni x\rightarrow a(x+z)\}$ have the same distribution, and to be {\it stochastically continuous}, in the sense that for every $x\in \mathbb{R}^d$ and $\delta >0$
\begin{align*}
 \lim_{h\downarrow 0}\langle I[ |a(x+h)-a(x)| > \delta ] \rangle =0.
\end{align*}
This last condition ensures that the map $\Omega\times \mathbb{R}^d\ni (a,z)\ \rightarrow \ a(\cdot + z ) \in \Omega$ is  measurable  with respect to the product topology of $\Omega\times \mathbb{R}^d$.\\
With this additional structure, we can consider the random map $G: \Omega \ni a \rightarrow G(a; \cdot, \cdot)$ .
We also remark that, by definition (\ref{F}), $\mathcal{F}$ is countably generated and therefore for every $p \in [1; +\infty)$, the space
\begin{align*}
L^p( \Omega, \mathcal{F}, \mathds{P}):= \biggl\{ \zeta: \Omega \rightarrow \mathbb{R} \ \ \text{measurable}, \ : \ \langle \ |\zeta|^p \ \rangle^{\frac 1 p} := \mathbb{E}_{\mathds{P}}[\  |\zeta|^p \  ]^{\frac 1 p} < +\infty  \biggr\}
\end{align*}
is separable.

\section{Main result and remarks}
Throughout the paper, as a basic assumption, we consider the domain \mbox{$D \subset \mathbb{R}^d$} to be open and such that
\begin{align}\label{D}
|\partial D |= 0.
\end{align}
This condition basically ensures that if a function $u$ is zero almost everywhere outside $D$, and admits weak derivatives up to order k, then the derivatives are as well almost surely zero outside the domain: This will prove to be useful when defining
the approximate problem for (\ref{TG}), cf.(\ref{PDa}), which calls for a higher order operator and thus Dirichlet boundary conditions also for the derivatives.

In this paper we mainly provide two existence results for the Green function in a domain  $D \subset \mathbb{R}^d$ with $d \geq 2$. As introduced in Section \ref{Not}, for a given $a\in \Omega$, we treat the Green function for a domain $D$ as an object $G_D(a; \cdot, \cdot)$ in two space variables $(x, y) \in \mathbb{R}^d \times \mathbb{R}^d$, which satisfies for almost every singularity point $y\in \mathbb{R}^d$ the equation (\ref{TG}).
It is with this generalised definition of Green function that we manage to prove its existence and uniqueness (cf. Lemma \ref{U}) and also to obtain optimal estimates for the $L^2$-norm in both the space variables of $G_D(a; \cdot, \cdot)$, its gradient and its mixed derivatives.

In the first theorem, we show that if the domain $D$ is open, and if $d=2$ also bounded in at least one direction, then for every coefficient field $a\in\Omega$ the Green function $G_D(a; \cdot, \cdot)$ exists; 
In particular, this result also includes the existence of the fundamental solution, i.e. the Green function for $D= \mathbb{R}^d$, with $d>2$.
In the case of open domains bounded in at least one direction and strongly elliptic (cf. (\ref{StE}) coefficient fields $a\in \Omega$, we also provide in Corollary \ref{Cor1} an improvement of the estimates obtained in Theorem \ref{Teo1}, namely that the off-diagonal $L^2$-norms of $G_D(a; \cdot, \cdot)$, $G_D(a; \cdot, \cdot)$ and $\nabla\nabla G_D(a; \cdot, \cdot)$ decay exponentially in the unbounded directions.

Corollary \ref{Cor2} deals with the introduction of a stationary ensemble on the space of coefficient fields $\Omega$ and provides in the case of systems a generalisation, at the level of the first moments in probability, of the stochastic bounds obtained by Delmotte and Deuschel in \cite{dd} for the scalar case.

More precisely, we prove the following statements
\begin{theorem}{\label{Teo1}}
Let $d\geq 2$ and $D\subset \mathbb{R}^d$ be a general open domain satisfying (\ref{D}). Then
\begin{itemize}
\item[(a)] If $d > 2$, for every $a\in \Omega$ there exists the Green function $G_D(a;\cdot,\cdot)$ and it satisfies for every $z \in \mathbb{R}^d$, $R>0$, $\alpha > d-2$,
\begin{align}
\int_{|y-z|< R}& \int_{|x-z|< R}\hspace{-0.5cm} |x-y|^{\alpha}\bigl(|\nabla G_D(a; x, y)|^2 \notag\\
&\hspace{4cm}+  |\nabla_y G_D(a; x, y)|^2\bigr) \lesssim R^{2+\alpha},\label{Teo1B}\\
\int_{|y-z|< R}& \int_{|x-z| > 8R}\hspace{-0.5cm}|\nabla G_D(a; x, y)|^2 \lesssim R^{2},\label{Teo1A2}
\end{align}
and for every $z \in \mathbb{R}^d$, $R>0$ and $1 \leq p < \frac{d}{d-2}$ and $1 \leq q < \frac{d}{d-1}$
\begin{align}
\int_{|y-z|< R}& \int_{|x-z|< R} |G_D(a; x, y)|^p \lesssim R^{(2-p)d + 2p},\label{Teo1C}\\
\int_{|y-z|< R} &\int_{|x-z|< R} \hspace{-1cm}|\nabla G_D(a; x, y)|^q + |\nabla_yG_D(a; x, y)|^q \lesssim R^{(2-q)d + q},\label{Teo1D}\\
\int_{|y-z|< R}& \int_{|x-z|> 2R} |\nabla\nabla G_D(a; x, y)|^2 \lesssim 1. \label{Teo1A}
\end{align}
\item[(b)] If $d=2$ and $D$ is bounded in at least one direction, then for every $a\in \Omega$ the Green function $G_D(a; \cdot, \cdot)$ exists as well and satisfies the bounds (\ref{Teo1B})-(\ref{Teo1A2}), (\ref{Teo1A}) and (\ref{Teo1C})-(\ref{Teo1D}). All the constants,
with the exception of (\ref{Teo1A}), depend also on the size of the smallest bounded direction of $D$ and the bound (\ref{Teo1A2}) holds for radii $R \gtrsim_D 1$.
\end{itemize}
\end{theorem}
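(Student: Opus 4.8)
In keeping with the philosophy announced in the introduction, I would not try to solve \eqref{TG} for a \emph{fixed} $y$ (which may be impossible), but build the two-variable object $G_D(a;\cdot,\cdot)$ as a limit of Green's functions of a \emph{regularised} operator, controlled only through $L^2$-type norms in the pair $(x,y)$. Fix an integer $k>d/2$ and, for $\mathsf{h}>0$, replace $-\nabla\cdot a\nabla$ by the symmetric, coercive, order $2k$ operator $\Ah:=-\nabla\cdot a\nabla+\mathsf{h}^{2k-2}(-\Delta)^{k}$ on $D$, with Dirichlet data for all derivatives up to order $k-1$; this is where \eqref{D} is used, since $|\partial D|=0$ lets us identify $H^k_0(D)$ with the $H^k$-functions that, together with their first $k-1$ derivatives, vanish a.e.\ outside $D$. (If one prefers to keep the regularised problem entirely elementary, one may in addition exhaust $D$ by bounded open sets, all the estimates below being uniform in $D$.) Since $k>d/2$ one has $\delta(\cdot-y)\in H^{-s}(D)$ with $d/2<s<2k-d/2$, so by Lax--Milgram the Green's function $\Gh=\Gh(a;\cdot,\cdot)$ of $\Ah$ exists, is a genuine continuous function, solves $\Ah\Gh(\cdot,y)=\delta(\cdot-y)$ for \emph{every} $y$, is $\Ah$-harmonic off the diagonal, and its $y$-derivatives solve the differentiated equations as in \eqref{TGy}. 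The whole proof then reduces to deriving for $\Gh$ the bounds \eqref{Teo1B}--\eqref{Teo1A} with constants uniform in $\mathsf{h}$ (and in $D$), and finally letting $\mathsf{h}\downarrow0$.

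\textbf{The uniform a priori estimates.} Here lies the real work, and it must be done purely by energy, Caccioppoli and Poincaré inequalities, with no appeal to the De Giorgi--Nash--Moser pointwise bounds that fail for systems. The clean, non-circular input is an energy identity for a divergence-form right-hand side: testing the $x$-equation of $\nabla_y\Gh$ against itself (with cutoffs) shows that for every $L^2$ vector field $\vec g$ the function $v:=\int\nabla_y\Gh(\cdot,y)\cdot\vec g(y)\,dy$ solves $\Ah v=-\nabla\cdot\vec g$, whence by \eqref{KC} and $|a|\le1$ one gets $\int|\nabla v|^{2}\lesssim\int|\vec g|^{2}$; equivalently the integral operator with kernel $\nabla_x\nabla_y\Gh$ is bounded on $L^2$ with a constant depending only on $\lambda$. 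Because $\Gh$ is $\Ah$-harmonic in each variable away from $x=y$, Caccioppoli's inequality on dyadic annuli converts this into the localised exterior bound \eqref{Teo1A}, again uniformly in $\mathsf{h}$ and $D$. One then ``integrates up'': integrating $\nabla_x\nabla_y\Gh$ in one of the variables over dyadic annuli and telescoping the resulting annular averages gives --- \emph{iteratively improving the decay exponent}, the scalar pointwise estimate $|G|\lesssim|x-y|^{2-d}$ being replaced by its $L^2$-average --- the optimal rates $\fint_{B_{2\rho}(y)\setminus B_\rho(y)}|\nabla_y\Gh|^{2}\lesssim\rho^{2(1-d)}$ and $\fint_{B_{2\rho}(y)\setminus B_\rho(y)}|\Gh|^{2}\lesssim\rho^{2(2-d)}$ (for $d>2$, using that $\nabla_y\Gh$ and $\Gh$ vanish at infinity, together with the symmetry $\Gh(x,y)=\Gh(y,x)^{\mathsf T}$ to pass between $\nabla_x\Gh$ and $\nabla_y\Gh$). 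Feeding these back into Caccioppoli and summing against the weight $|x-y|^{\alpha}$, which converges exactly for $\alpha>d-2$, yields \eqref{Teo1B} and the exterior gradient bound \eqref{Teo1A2}; the local $L^p$ and $L^q$ estimates \eqref{Teo1C}--\eqref{Teo1D} then follow by interpolating the optimal decay against the local energy already contained in \eqref{Teo1B}, which is what pins down the ranges $p<\tfrac{d}{d-2}$ and $q<\tfrac{d}{d-1}$.

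\textbf{Passage to the limit; the case $d=2$.} Since the bounds \eqref{Teo1B}--\eqref{Teo1A} are uniform in $\mathsf{h}$, along a subsequence $\nabla\Gh$, $\nabla_y\Gh$, $\nabla\nabla\Gh$ converge weakly in the relevant (weighted) $L^2_{\mathrm{loc}}$ spaces and $\Gh$ converges in $L^p_{\mathrm{loc}}$ for the admissible $p$; by weak lower semicontinuity the limit $G_D(a;\cdot,\cdot)$ inherits all of \eqref{Teo1B}--\eqref{Teo1A}, in particular \eqref{WB}--\eqref{WB2}. It remains to verify \eqref{G} for a.e.\ $y$: for fixed $\zeta\in C^\infty_0(D)$ the regularising term $\mathsf{h}^{2k-2}\int\nabla^k\zeta\cdot\nabla^k\Gh(\cdot,y)$ is $O(\mathsf{h}^{k-1})$ times a bounded quantity, hence drops out, and the weak formulation survives the limit for a.e.\ $y$; uniqueness is then Lemma~\ref{U}. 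For $d=2$ (part (b)) the only structural change is that $\Gh$ and $\nabla_y\Gh$ no longer decay at infinity; the hypothesis that $D$ be bounded in one direction supplies instead a Poincaré inequality on slabs, which is exactly what closes the telescoping step. This makes the constants depend in addition on the width of the smallest bounded direction, and it renders \eqref{Teo1A2} meaningful only for radii $R\gtrsim_D1$ --- for smaller $R$ the exterior gradient energy of the two-dimensional Green's function is genuinely infinite. The mixed-derivative bound \eqref{Teo1A}, being a purely near-field to intermediate-field statement, stays $D$-independent.

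\textbf{The main obstacle.} Everything hinges on making the a priori estimates simultaneously (i) independent of the regularisation $\mathsf{h}$ and (ii) independent of the, possibly unbounded, domain $D$ (in $d=2$: of its width). This is what forces the purely energetic route and, in particular, the iterative improvement of the decay exponent of the $L^2$-average of $\Gh$ and $\nabla_y\Gh$ in place of a Moser iteration; getting that iteration to converge to the \emph{optimal} exponents $2(2-d)$ and $2(1-d)$ with geometry-independent constants is the heart of the matter, and is exactly where the relaxations ``solve the equation only for a.e.\ $y$'' and ``work with $L^2$-in-$(x,y)$ quantities'' become indispensable.
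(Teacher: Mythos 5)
Your overall architecture (higher-order regularisation with $k>d/2$, uniform two-variable $L^2$ bounds, weak compactness, testing against $\rho(y)\zeta(x)$ to recover the equation for a.e.\ $y$) matches the paper's Steps 1 and 4. But there is a genuine gap exactly at the point you pass over most quickly. From the energy estimate you correctly obtain that the operator with kernel $\nabla_x\nabla_y G_{\mathsf h}$ is bounded on $L^2$ with constant depending only on $\lambda$ — this is the paper's dual bound \eqref{Ex.6a}. You then claim that ``Caccioppoli's inequality on dyadic annuli converts this into the localised exterior bound \eqref{Teo1A}''. It does not: operator-norm boundedness of a kernel does not control the $L^2_{x,y}$-norm of the kernel itself (a bounded convolution operator need not have a square-integrable kernel off the diagonal), and Caccioppoli only trades a gradient for the function on a larger set — at this stage you possess no unconditional $L^2_{x,y}$ bound on anything to feed into it. What is needed is to exchange the supremum over test fields $g$ with the integration in $y$, i.e.\ to upgrade ``$\int_{|y|<1}|\int\nabla\nabla G_{\mathsf h}(x,y)g(x)\,dx|^2\lesssim\int|g|^2$ for every fixed $g$'' to ``$\int_{|y|<1}\int_{|x|>8}|\nabla\nabla G_{\mathsf h}|^2\lesssim 1$''. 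This exchange is the pivotal new ingredient of the paper (Lemmas \ref{BGO2} and \ref{BGO3}, a variant of Lemma 4 of \cite{BGO}, in the spirit of Avellaneda--Lin duality): a compactness statement for $\sigma$-finite ensembles of solutions of the homogeneous regularised equation, proved via Fourier cosine series, an interpolation inequality against $(-\Delta_N)^{-l/2}$, and Caccioppoli estimates adapted to $-\nabla\cdot a\nabla+\varepsilon\mathcal{L}_n$; it uses crucially that the family $\{\nabla_yG_{\varepsilon,D}(\cdot,y)\}_{|y|<1}$ solves the homogeneous equation in the exterior region. Your proposal contains no substitute for this step, and without it the chain of estimates does not start.

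A second problem is your intermediate claim of the annular rates $\fint_{B_{2\rho}(y)\setminus B_\rho(y)}|\nabla_yG_{\mathsf h}|^2\lesssim\rho^{2(1-d)}$ and $\fint_{B_{2\rho}(y)\setminus B_\rho(y)}|G_{\mathsf h}|^2\lesssim\rho^{2(2-d)}$ for fixed $y$: summed over dyadic annuli the first is exactly \eqref{R1}, which Remark \ref{R.1}(i) shows cannot hold for every $a\in\Omega$ in the systems case (De Giorgi's example), so no deterministic iteration can produce it. The correct statements must keep the average in $y$ as well; the paper obtains the near-diagonal bound \eqref{Teo1B} not by iteratively improving a decay exponent but by rescaling the single exterior estimate \eqref{O.6} to all radii (via \eqref{Ex.29}--\eqref{Ex.24}), covering $\{|y|<1\}$ by balls of radius $r$, and summing dyadically in $|x-y|$ — the weight $|x-y|^\alpha$ with $\alpha>d-2$ being precisely what compensates the $r^{-d}$ loss from the covering. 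Your treatment of $d=2$ (Poincaré on slabs replacing Sobolev, constants depending on the width, \eqref{Teo1A2} only for $R\gtrsim_D 1$, \eqref{Teo1A} domain-independent) is in the right spirit and matches the paper's Step 3, but it inherits the same missing duality/compactness ingredient, which there is applied with the roles of $x$ and $y$ exchanged.
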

\begin{corollary}\label{Cor1}
Let $D \subset \mathbb{R}^d$ with $d \geq 2$ be an open domain satisfying (\ref{D}) and bounded in at least one direction. Then for every $a\in \Omega$ satisfying also (\ref{StE}), there exists a constant $C_1 \lesssim_D 1$ such that 
for every $z\in \mathbb{R}^d$ and $R \gtrsim_D 1$ it holds
\begin{align}
\int_{|y-z|< R}\int_{|x-z|> 4R} &|\nabla\nabla G_{D}(a; x,y)|^2 \lesssim \exp{(-\frac{R}{C_1})},\label{Teo1F}\\
\int_{|y-z|< R}\int_{|x-z|> 4R} & |\nabla G_{ D}(a; x,y)|^2 + |G_{ D}(a; x,y)|^2 \lesssim_D \exp{(-\frac{R}{C_1})}.\label{Teo1E}
\end{align}
\end{corollary}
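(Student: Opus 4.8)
The plan is to derive both (\ref{Teo1F}) and (\ref{Teo1E}) from a single \emph{Saint--Venant type energy decay}, which is then fed with the polynomial estimates of Theorem~\ref{Teo1}. After a rotation of coordinates we may assume that $D$ is contained in the slab $\{\,0<x_{1}<L\,\}$, where $L\lesssim_{D}1$ denotes the size of the smallest bounded direction of $D$. The heart of the matter is the following claim: if $v\in H^{1}_{\mathrm{loc}}$ vanishes almost everywhere outside $D$ and is a weak solution of $-\nabla\cdot a\nabla v=0$ in $\{\,|x-z|>\rho_{*}\,\}$ (with the homogeneous Dirichlet condition on $\partial D$ understood, exactly as for $G_{D}(a;\cdot,y)$), then, writing $e(\rho):=\int_{|x-z|>\rho}|\nabla v|^{2}$, there exist $\theta=\theta(d,\lambda)\in(0,1)$ and a step size $\tau\simeq L$ such that
\[
e(\rho+\tau)\ \le\ \theta\,e(\rho)\quad\text{for all }\rho\ge\rho_{*},\qquad\text{hence}\qquad e(\rho_{*}+n\tau)\ \le\ \theta^{\,n}\,e(\rho_{*})\ \ \text{for }n\in\mathbb N .
\]
This produces exponential decay of $e(\rho)$ at rate $\exp(-\rho/C_{1})$ with $C_{1}\simeq\tau/\log(1/\theta)\lesssim_{D}1$, and --- via one more use of the Poincaré inequality in the bounded direction, $\int_{|x-z|>\rho}|v|^{2}\lesssim L^{2}\,e(\rho-L)$ --- the analogous decay for $\int_{|x-z|>\rho}|v|^{2}$, at the cost of a factor $L^{2}\lesssim_{D}1$.

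To prove the claim I would combine two classical ingredients. First, a localised energy (Caccioppoli) estimate: testing the equation for $v$ with $\eta^{2}v$, where $\eta$ equals $1$ on $\{|x-z|>\rho+\tau\}$, vanishes on $\{|x-z|<\rho\}$ and has $|\nabla\eta|\lesssim\tau^{-1}$; after rewriting $\int\eta^{2}\nabla v\cdot a\nabla v$ through $\int\nabla(\eta v)\cdot a\nabla(\eta v)$ by means of the symmetry (\ref{Sym}), applying the ellipticity hypothesis (this is the one step that uses the strong ellipticity (\ref{StE})) and absorbing the cross terms with the help of (\ref{bdd}), one obtains $\int_{|x-z|>\rho+\tau}|\nabla v|^{2}\lesssim\tau^{-2}\int_{\rho<|x-z|<\rho+\tau}|v|^{2}$. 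Second, a Poincaré inequality in the bounded direction: for a.e.\ $x'$ the slice $x_{1}\mapsto v(x_{1},x')$ vanishes at the endpoints of each connected component, of length $\le L$, of the $x_{1}$-section of $D$, whence $\int_{\rho<|x-z|<\rho+\tau}|v|^{2}\lesssim L^{2}\,\bigl(e(\rho-L)-e(\rho+\tau+L)\bigr)$. Combining the two and choosing $\tau\simeq L$ so that the prefactor $\tau^{-2}L^{2}$ is of order one, one gets $e(\rho+\tau)\lesssim e(\rho-L)-e(\rho+\tau+L)$; since $e$ is non-increasing this rearranges to $e(\rho+\tau+L)\le\theta\,e(\rho-L)$ with $\theta=\theta(d,\lambda)\in(0,1)$, which is the claimed decay over a step $\simeq L$.

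It remains to feed in the base values. For (\ref{Teo1F}) I would fix $y$ with $|y-z|<R$ and apply the claim to $v=\nabla_{y}G_{D}(a;\cdot,y)$, which by (\ref{TGy}) solves the homogeneous equation in $\{|x-z|>2R\}$ (the source $\nabla_{y}\delta(\cdot-y)$ being supported at $y$) and has $\nabla_{x}v=\nabla\nabla G_{D}$; with $\rho_{*}=2R$, integrating the resulting inequality over $|y-z|<R$ and using (\ref{Teo1A}) for the base value at $\rho=2R$ gives $\int_{|y-z|<R}\int_{|x-z|>4R}|\nabla\nabla G_{D}|^{2}\lesssim\exp(-R/C_{1})$ for $R\gtrsim_{D}1$, i.e.\ (\ref{Teo1F}). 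For (\ref{Teo1E}) I would apply the claim to $v=G_{D}(a;\cdot,y)$, which by (\ref{E}) solves the homogeneous equation in $\{|x-z|>2R\}$ (the source $\delta(\cdot-y)$ being supported at $y$); the needed base value $\int_{|y-z|<R}\int_{|x-z|>2R}|\nabla G_{D}|^{2}\lesssim R^{2}$ is obtained by splitting $\{|x-z|>2R\}$ into the far part $\{|x-z|>8R\}$, controlled by (\ref{Teo1A2}), and the annulus $\{2R<|x-z|<8R\}$, on which $|x-y|\ge R$, so that (\ref{Teo1B}) applied with centre $z$ and radius $8R$ (and $\alpha$ fixed, say $\alpha=d-1$) bounds its contribution by $\lesssim R^{-\alpha}R^{2+\alpha}=R^{2}$; the iteration then yields $\int_{|y-z|<R}\int_{|x-z|>4R}|\nabla G_{D}|^{2}\lesssim R^{2}\exp(-R/C_{1})$, hence $\lesssim\exp(-R/C_{1})$ for $R\gtrsim_{D}1$ after possibly enlarging $C_{1}$, and the corresponding bound for $\int_{|y-z|<R}\int_{|x-z|>4R}|G_{D}|^{2}$ follows from the Poincaré inequality in the bounded direction, which is responsible for the factor $\lesssim_{D}1$ in (\ref{Teo1E}). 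The step I expect to be the main obstacle is the rigorous justification of the localised energy estimate for the possibly very rough vectorial operator --- in particular, verifying that $G_{D}(a;\cdot,y)$ and $\nabla_{y}G_{D}(a;\cdot,y)$ are genuine $H^{1}$ weak solutions of the homogeneous equation with homogeneous Dirichlet data in the far region $\{|x-z|>2R\}\cap D$, which rests on the off-diagonal $L^{2}$-regularity encoded in the definition of the Green's function and in Theorem~\ref{Teo1} --- together with the bookkeeping that makes the radii match (the reason (\ref{Teo1B}) must be invoked at the enlarged scale $8R$ in addition to (\ref{Teo1A2})).
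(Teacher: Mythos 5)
Your proposal is correct in substance, but it follows a genuinely different route from the paper. The paper proves \eqref{Teo1F} by first establishing an exponentially \emph{weighted} energy estimate (Lemma \ref{exp}) for the auxiliary problem $-\nabla\cdot a\nabla u=\nabla\cdot g$ in the slab -- this is exactly where the pointwise ellipticity \eqref{StE} enters, since the weight $\exp(|x'-x_0'|/C_0)$ is incompatible with the merely integrated condition \eqref{KC} -- and then converts the resulting bound on the functionals $g\mapsto\int\nabla\nabla G_D\,g$ into the energy bound \eqref{Ex.33b} via the duality/compactness Lemma \ref{BGO2} with $\varepsilon=0$ and the exponentially weighted measure; the bounds \eqref{Teo1E} for $\nabla G_D$ and $G_D$ then follow by two applications of Poincar\'e's inequality in the bounded direction, exactly as in your last step. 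You instead work for a.e.\ fixed $y$ with $v=\nabla_yG_D(a;\cdot,y)$ (resp.\ $v=G_D(a;\cdot,y)$) and run a Saint--Venant/hole-filling iteration: Caccioppoli on outer annuli plus the slice-wise Poincar\'e inequality in the bounded direction give geometric decay of the tail energy over steps of size $\simeq L$, seeded by \eqref{Teo1A} (resp.\ by \eqref{Teo1A2} together with \eqref{Teo1B} at scale $8R$, with the polynomial prefactor $R^2$ absorbed into $C_1$), and you integrate in $y$ at the end. What each approach buys: yours dispenses with both Lemma \ref{exp} and Lemma \ref{BGO2}, and since the Caccioppoli step can be run through the identity $\nabla(\eta v)\cdot a\nabla(\eta v)=\nabla(\eta^2v)\cdot a\nabla v+|v|^2\nabla\eta\cdot a\nabla\eta$ together with \eqref{KC} and \eqref{bdd} (as the paper itself does in \eqref{comp}, \eqref{Ex.4a}), it does not actually require \eqref{StE} -- a mild strengthening of the corollary; the paper's route produces the stronger weighted intermediate estimate \eqref{Ex.33b} and, by channelling everything through Lemma \ref{BGO2}, only ever tests the Green function's equation on the bounded inner ball with compactly supported admissible test functions, whereas you must justify, for a.e.\ single $y$, testing the far-field equation with the non-compactly supported $\eta^2 v$ (cutoff at infinity using the finite tail energy plus Poincar\'e) and the fact that the limiting $G_D(\cdot,y)$, $\nabla_yG_D(\cdot,y)$ are solutions in the sense of Definition \ref{R.6} with $\varepsilon=0$ away from the singularity -- the admissibility point you correctly flag, and which is of the same nature as what the paper itself relies on when it applies Lemma \ref{BGO2} with $\varepsilon=0$ to the limiting Green function.
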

\begin{corollary}{\label{Cor2}}
Let $d > 2$ and let $\langle \cdot \rangle$ be a stationary ensemble on $\Omega$. 
Then, the Green function $G(a; \cdot, \cdot)$ for the whole space $\mathbb{R}^d$ satisfies for almost every $ x,y \in \mathbb{R}^d$ the annealed pointwise bounds
\begin{align}
\langle |G(\cdot\ &;x,y)|\rangle \lesssim |x-y|^{2-d},\label{C2A}\\
\langle |\nabla G(& \cdot \ ;x,y)|\rangle\lesssim |x-y|^{1-d},\label{C2B}\\
\langle |\nabla\nabla G&(\cdot \ ;x,y)|\rangle\lesssim |x-y|^{-d}.\label{C2C}
\end{align}
\end{corollary}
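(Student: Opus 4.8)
\textbf{Proof proposal for Corollary \ref{Cor2}.}

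The plan is to deduce the annealed pointwise bounds from the deterministic $L^2$-estimates of Theorem \ref{Teo1} by combining them with a mean-value / Caccioppoli-type property for $a$-harmonic tensor fields that survives in the vectorial setting, and then exploiting stationarity to pass from space-averages to the pointwise expectation $\langle\cdot\rangle$. The key point is that although the pointwise De Giorgi--Nash--Moser bound fails for systems, its \emph{averaged} counterpart is recovered once one takes the expectation: stationarity allows one to trade the spatial integral in \eqref{Teo1B}--\eqref{Teo1A} for a factor $|x-y|^{d}$ times the single-point quantity $\langle|\nabla G(\cdot;x,y)|^2\rangle$, etc. Concretely, fix $x,y$ with $r:=|x-y|$ large (the small-$r$ case being handled by scaling, or rather there is nothing to prove since the bounds are at fixed separation), apply \eqref{Teo1B} with $D=\mathbb{R}^d$, $z$ near $y$ and $R\sim r$, and with $\alpha$ chosen in $(d-2,d)$: this gives $\int\!\int |x'-y'|^{\alpha}|\nabla G|^2 \lesssim r^{2+\alpha}$ over a box of size $r$. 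By stationarity of $\langle\cdot\rangle$ the integrand's expectation $\langle|\nabla G(\cdot;x',y')|^2\rangle$ depends only on $x'-y'$, so after taking $\langle\cdot\rangle$ and using Fubini one obtains a weighted integral of $\langle|\nabla G(\cdot;\cdot,0)|^2\rangle$ that is finite with the stated scaling; a dyadic decomposition in $|x'-y'|$ then yields $\langle |\nabla G(\cdot;x,y)|^2\rangle \lesssim r^{2-2d}$ for a.e.\ pair, hence \eqref{C2B} by Cauchy--Schwarz (Jensen). The same argument run on \eqref{Teo1C} gives \eqref{C2A} and on \eqref{Teo1A} (together with \eqref{Teo1A2}) gives \eqref{C2C}.

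More carefully, the mechanism for the dyadic step is the following. Since $G(\cdot;\cdot,y')$ is $a$-harmonic away from $y'$, the function $\nabla\nabla G$ is $a$-harmonic in $x$ on any ball avoiding $y'$; an interior Caccioppoli inequality (which does hold for systems, being just an energy estimate) bounds the $L^2$-average of $\nabla\nabla G$ over a ball of radius $\rho$ around $x$ by $\rho^{-1}$ times the $L^2$-average of $\nabla G$ over the doubled ball, and iterating/telescoping this together with \eqref{Teo1B}--\eqml{Teo1A2} propagates the estimate from the whole box down to the point $(x,y)$. Here the subtle input is that the left sides of \eqref{Teo1B}, \eqref{Teo1C}, \eqref{Teo1A} are genuinely two-variable integrals, and one needs a Caccioppoli estimate \emph{simultaneously in $x$ and $y$} for the mixed object $\nabla\nabla G$; this is available because $\nabla_y G$ solves \eqref{TGy}, so $\nabla\nabla G$ is $a$-harmonic separately in each variable on the relevant region, and one can apply the energy estimate successively. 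Taking expectations commutes with all of this by stationarity.

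The main obstacle I expect is the passage from the \emph{weighted} box estimate \eqref{Teo1B} (with the $|x-y|^{\alpha}$ weight, which degenerates on the diagonal) to a clean pointwise bound: one must localise away from the diagonal, where the weight is comparable to $r^{\alpha}$, and simultaneously away from the boundary of the box, and then sum the resulting dyadic contributions without losing the optimal power. Choosing $\alpha$ close to $d$ makes the diagonal contribution summable but one must check the geometric series in the dyadic radii converges with the right exponent; choosing $\alpha$ close to $d-2$ optimises the far-field but worsens the near-diagonal sum — the interplay has to be balanced, and this is exactly where the freedom "$\alpha>d-2$ arbitrary" in Theorem \ref{Teo1} is used. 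A secondary technical point is that \eqref{C2A}--\eqref{C2C} are asserted for \emph{a.e.}\ $(x,y)$ rather than every pair, which is forced by the a.e.-in-$y$ nature of the equation \eqref{TG}; one handles this by first proving the bound for the box-averaged quantities for every $z$, and then invoking Lebesgue differentiation in $(x,y)$ together with stochastic continuity of the ensemble to upgrade to a genuine pointwise statement a.e. Once these localisation bookkeeping issues are settled, the three estimates \eqref{C2A}, \eqref{C2B}, \eqref{C2C} follow in parallel, differing only in which line of Theorem \ref{Teo1} is fed into the machine.
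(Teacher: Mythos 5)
There is a genuine gap, and it sits exactly at the step you describe as ``iterating/telescoping Caccioppoli together with a dyadic decomposition propagates the estimate from the box down to the point $(x,y)$''. What stationarity and Fubini really give you (and this part of your plan is correct, and is the paper's Step 2) is that the annealed quantity $\langle|\nabla\nabla G(\cdot;x,y)|^2\rangle$ depends only on $x-y$ and that its integral over annuli $\{|x-y|\sim R\}$ is $\lesssim R^{-d}$ (similarly for $\nabla G$ and $G$), i.e.\ bounds that are pointwise in $y$ but still \emph{averaged in $x$}, cf.\ (\ref{O.5})--(\ref{O.5tris}). Passing from these annulus averages to a pointwise-in-$x$ bound cannot be done by Caccioppoli iteration: an energy estimate only controls $L^2$ averages of derivatives by $L^2$ averages of the function on a larger set, never a pointwise value by an $L^2$ average; the latter is precisely the local boundedness/Lipschitz estimate that fails for systems (De Giorgi's example, Remark \ref{R.1}(i)--(ii)), and taking $\langle\cdot\rangle$ afterwards cannot repair a deterministic step that is false. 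Nor does Lebesgue differentiation in $(x,y)$ help: stationarity reduces $\langle|\nabla G(\cdot;x,y)|^2\rangle$ to a function $h(x-y)$, but you only control integrals of $h$ over annuli of thickness comparable to $|x-y|$, not averages over arbitrarily small boxes in the $x$ (or $x-y$) variable, so there is no differentiation basis to invoke. Note also that your intermediate claim $\langle|\nabla G(\cdot;x,y)|^2\rangle\lesssim|x-y|^{2-2d}$ is a pointwise \emph{second}-moment bound, strictly stronger than \eqref{C2B} and not proved in this paper at all (that is the subject of the follow-up work of Bella and Giunti).

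The paper closes this gap differently. After the averaged Step 2, it fixes $x$ and views $u=G(a;x,\cdot)$ (resp.\ $\nabla_x G(a;x,\cdot)$) as an $a$-harmonic function away from $x$, writes $\eta u$ via the representation formula with the Green function itself as kernel, and obtains the deterministic pointwise bounds (\ref{W10})--(\ref{W11}): $|u(y)|$ and $|\nabla u(y)|$ are controlled by the product of an $L^2$-annulus norm of $(G,\nabla G,\nabla\nabla G)(\cdot;\cdot,y)$ and an $L^2$-annulus norm of $(u,\nabla u)$. Taking $\langle\cdot\rangle$ and applying Cauchy--Schwarz \emph{in probability} splits this product into two factors, each of which is exactly one of the Step-2 averaged annealed estimates; this is why the conclusion is a bound on the first moment only, pointwise in $x$ and $y$, which is precisely the content of \eqref{C2A}--\eqref{C2C}. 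So your proposal captures the stationarity/averaging half of the argument, but the decisive second half -- pointwise control through the Green-function representation plus Cauchy--Schwarz in $\langle\cdot\rangle$, rather than any deterministic local regularity -- is missing, and the route you propose in its place would fail for systems.
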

We recall that in all the previous inequalities $\lesssim$ and $\lesssim_D$ stand for $\leq C$ with the constant C respectively depending on $d$ and $\lambda$ or on $d$,$\lambda$ and the size of the smallest bounded component of $D$.

\medskip 

In the following remark we argue that the bounds (\ref{C2B}) and (\ref{C2C}) require the expectation $\langle \cdot \rangle$:
\begin{remark}\label{R.1}
\begin{itemize}
\item[(i)] For $d > 2$, a coefficient field $a \in \Omega$ and an associated Green's function $G(a; \cdot ,y)$ on $\mathbb{R}^d$, the bound 
\begin{align}\label{R0}
|\nabla G(a; x, y)| \lesssim |x-y|^{1-d} \ \ \ \text{for a. e. $x, y \in \mathbb{R}^d$,}
\end{align}
implies that any finite energy a-harmonic application u is (locally) bounded. More precisely, the local boundedness of a-harmonic applications is also implied if assuming instead of (\ref{R0})
the weaker $L^2$- bound 
\begin{align}\label{R1}
\int_{|x-y| > R}|\nabla G(a; x, y)|^{2} \lesssim R^{2-d} \ \ \text{for every $R>0$ and a.e. $y\in\mathbb{R}^d$.}
\end{align}
While in the scalar case the bound (\ref{R1}) holds (\cite{go1}, Lemma 2.9), we cannot expect it to be true for every coefficient field $a\in \Omega$ in the case of systems. The following example of De Giorgi \cite{DeG} shows indeed
that in $d>2$, the unbounded function $u:\mathbb{R}^d\rightarrow Y$, with $Y=\mathbb{R}^d$, given by
\begin{align}\label{r.7}
u(x)=\frac{x}{|x|^{\gamma}}, \text{\ \ \ $\gamma:=\frac{d}{2}\biggl(1-\frac{1}{\sqrt{(2d-2)^2+1}}\biggr)> 1$}
\end{align}
is locally of finite energy and a-harmonic for the symmetric and elliptic coefficient field
\begin{align}\label{r.9}
\xi\cdot a(x)\xi= \xi:\xi + \bigl((d-2)Tr(\xi)+ d(\frac{x}{|x|}\cdot\xi\frac{x}{|x|})\bigr), \ \ \ \xi\in \mathbb{R}^{d\times d}.
\end{align}
\item[(ii)] Assuming that both (\ref{R1}) and
\begin{align}\label{R2}
\int_{|x-y| > R}|\nabla\nabla G(a; x, y)|^{2} \lesssim R^{-d} \text{\ for every $R>0$ and a.e.  $y\in \mathbb{R}^d$}
\end{align}
hold, implies that any a-harmonic application u is also locally Lipschitz. Hence, also in the scalar case both conditions (\ref{R1}) and (\ref{R2}) cannot be true for every coefficient field $a\in \Omega$ (\cite{PS}, Example 3).
\item[(iii)]  For $\alpha \in (0,1)$, even a suboptimal assumption on the decay of (\ref{R2}) as 
\begin{equation}\label{R3}
\int_{|x-y| > R}|\nabla\nabla G(a; x, y)|^{2}  \lesssim R^{-d +\alpha},
\end{equation}
cannot hold for every coefficient field $a\in \Omega$ both in the scalar and in the systems' case. Indeed, (\ref{R3}) implies a Liouville property for a-harmonic functions, namely that for $\beta\in (\frac{\alpha}{2}, 1)$ for any a-harmonic u on $\mathbb{R}^d$
\begin{align}\label{L}
\lim_{R\rightarrow +\infty}R^{-1+\beta}\bigl(\fint_{|x|<R}|u|^2 \bigr)^{\frac{1}{2}}=0 \ \ \Leftrightarrow \text{\ \ \  u is constant},
\end{align}
where
\begin{align*}
\fint_{|x| < R} |u|^2 = \frac{1}{\bigl|\{|x|< R \} \bigr|} \int_{|x|<R}|u|^2 \simeq R^{-d} \int_{|x|<R}|u|^2.
\end{align*}
It can be shown that in the scalar case (\ref{L}) doesn't even hold for uniformly elliptic and smooth coefficient fields (\cite{FO}, Proposition 21): For every $\varepsilon >0$, there exists indeed a smooth $a\in \Omega$ and an a-harmonic function u such that $\bigl(\fint_{|x|<R}|u|^2 \bigr)^{\frac{1}{2}} \lesssim R^{\varepsilon}$.
Moreover, in the case of systems, De Giorgi's example (\ref{r.7}) shows that a-harmonic functions can also (non trivially) vanish at infinity.
\end{itemize}
We postpone the proofs of (i), (ii) and (iii) to the Appendix. 
\end{remark}

\bigskip

This paper is organised as follows: In Section \ref{ST1} we give the argument for \mbox{Theorem \ref{Teo1}}, part (a) and (b).
The core of the proof for part (a), i.e. when $d>2$, is an $L^2$-off-diagonal bound for $\nabla\nabla G_D$ and $\nabla G_D$, in both space variables $x$ and $y$
and depending only on the dimension and the ellipticity ratio. It is mainly obtained through a duality argument \`a la Avellaneda-Lin (\cite{AL1}, Theorem 13) on standard energy estimates for solutions of $-\nabla \cdot a\nabla u = \nabla \cdot g$, combined with
an inner-regularity estimate for a-harmonic functions in the spirit of Lemma 4 of \cite{BGO}. We stress here that this result is inspired by Lemma 2 of \cite{BGO} and provides the new and pivotal ingredient for the first fundamental estimate
for $G_D$. This may be considered as the key ingredient for the whole argument of Theorem \ref{Teo1}.
Sobolev's inequality allows to extend the previous estimates also for $\nabla_y G_D$ and $G_D$. Finally, with the aid of rescaling and dyadic decomposition arguments, from the off-diagonal estimate on $\nabla\nabla G_D$ 
we also infer bounds for $\nabla G_D$ and $G_D$ close to the singularity $x=y$.

In dimension $d=2$, it is well known that the fundamental solution, i.e. the Green function for $D= \mathbb{R}^d$, does not exist. We indeed restrict our attention to domains $D$ which have at least one bounded direction: By 
substituting the scale-invariant Sobolev's inequality, which holds only for $d> 2$, with Poincar\'e-type inequalities, we may extend the arguments of part (a) to the two-dimensional case. 
We point out that the appeal to Poincar\'e-type inequalities introduces in the estimates for $G_D$ and its derivatives a dependence also on the minimal bounded direction of $D$.

We stress that our assumptions on $\Omega$ include in this set also very rough coefficient fields for which the existence of $G_D$ is not \textit{a priori} guaranteed. Therefore, we need to first approximate the problem (\ref{TG}),
carry out and adapt to the approximate solution the aforementioned a priori bounds on $G_D$, and then argue by standard weak-compactness of $W^{1,q}_{loc}$- spaces.
We approximate (\ref{TG}) through an $\varepsilon$-perturbation of the operator $-\nabla\cdot a \nabla$ with the hyper-elliptic term\footnote{This choice of $\mathcal{L}_n$ instead of the more standard $(-\Delta)^{n}$ will prove to
be more convenient when testing the equation and integrating by parts, as it avoids mixed derivatives and thus simplifies the calculations.}
\begin{align}\label{LL}
\mathcal{L}_n:= \sum_{i=1}^d (-\partial^2_i)^{n},
\end{align}
and thus consider for $\varepsilon > 0$, $a\in \Omega$ and $y\in \mathbb{R}^d$ the problem
\begin{align}\label{PDa}
\begin{cases}
 -\nabla\cdot a\nabla G_{\varepsilon,D}(a; \cdot, y) + \varepsilon\mathcal{L}_n G_{\varepsilon,D}(a; \cdot, y)=\delta(\cdot-y) \mbox{\ \ \hspace{0.2cm}  in D}\\
 \ \ G_{\varepsilon,D}(a; \cdot ,y) = 0  \mbox{ \hspace{5.2cm} in $\partial D$.}
\end{cases}
\end{align}
The assumption (\ref{D}) on the domain D and our understanding of the boundary conditions, i.e. that $G_{\varepsilon, D}$ vanishes almost surely outside $D$ or vanishes at infinity for $D=\mathbb{R}^d$, imply that the same boundary conditions hold also for the higher-order derivatives up to index $n-1$. For $D=\mathbb{R}^d$, the Dirichlet conditions on the boundary turn into the requirement for every $\partial^{\alpha}u$ with $0\leq |\alpha| \leq n-1$, to vanish at infinity.
For $n > \frac d 2$, Riesz's representation theorem ensures the existence of a unique weak solution $G_{\varepsilon,D}$ for every $\varepsilon>0$, $a\in \Omega$ and singularity point $y\in \mathbb{R}^d$. Moreover, assuming $n > \frac d 2 +1$ also implies that there exists a unique $\nabla_yG_{\varepsilon, D}(a; \cdot, y)$ which solves the approximate problem for (\ref{TGy})
\begin{align}\label{PDay}
\begin{cases}
 -\nabla\cdot a\nabla \nabla_y G_{\varepsilon, D}(a; \cdot , y) + \varepsilon\mathcal{L}_n\nabla_y G_D(a; \cdot , y)= \nabla_y\delta(\cdot-y) \mbox{\ \ \hspace{0.2cm}  in D}\\
 \nabla_y G_{\varepsilon, D}(a; \cdot , y) = 0  \ \mbox{\hspace{6.1cm} in $\partial D$.}
\end{cases}
\end{align}

In Section \ref{Corollaries} we provide the proof of Corollary \ref{Cor1} and Corollary \ref{Cor2}; In the first corollary we show that in the case of domains with at least one bounded direction and strongly elliptic coefficient fields
 we improve the estimates of Theorem \ref{Teo1} again by a duality argument which this time relies on a refinement of the standard energy estimate for solutions of $-\nabla \cdot a\nabla u = \nabla \cdot g$ in domains which have a bounded direction.
While the arguments of Theorem \ref{Teo1} and Corollary \ref{Cor1} are purely deterministic, in Corollary \ref{Cor2} we introduce a stationary ensemble $\langle \cdot \rangle$ on $\Omega$ and focus our attention on the fundamental solution $G$ in $d>2$ seen as a random map. The stationarity assumption on $\langle \cdot \rangle$ provides an improvement of the estimates on $G$ by upgrading the bounds of Theorem \ref{Teo1} from space-averaged in both variables $x$ and $y$ to annealed in $a$ but pointwise in $y$. An a priori estimate for locally a-harmonic functions allows us to conclude the argument and obtain estimates averaged in a, but pointwise in x and y.

In the last section we present an alternative partial proof for Corollary \ref{Cor2} which makes use of the Fourier techniques developed in \cite{cn2} and relies on a representation formula for the Fourier transform of the 
Green function. Finally, in the Appendix we give a self-contained proof of all the auxiliary results which are used in the arguments.
\section{Proof of Theorem 1}\label{ST1}
Let $a\in \Omega$ and $D \subset \mathbb{R}^d$ be a generic open domain satisfying (\ref{D}), with $d \geq 2$. For a fixed $y\in \mathbb{R}^d$ and $\varepsilon>0$, we consider the approximate problem for (\ref{TG}) introduced in (\ref{PDa}), i.e.
\begin{align*}
\begin{cases}
 -\nabla\cdot a\nabla G_{\varepsilon,D}(a; \cdot, y) + \varepsilon\mathcal{L}_n G_{\varepsilon,D}(a; \cdot, y)=\delta(\cdot-y) \mbox{\ \ in $D$}\\
 \ \ G_{\varepsilon,D}(a; \cdot, y) = 0 \ \mbox{ \hspace{4.75cm} in $\partial D$,}
\end{cases}
\end{align*}
where $\mathcal{L}_n$ is as in definition (\ref{LL}) and with $n$ a fixed odd integer such that $n > \frac d 2 +1$.\\
\begin{definition}\label{R.6} Let $R>0$, and $g \in [ L^{2}(\{ |x| < R \}) ]^d$. We say that $u \in L^1_{loc}(\mathbb{R}^d)$ is a weak solution of
\begin{align*}
\begin{cases}
 -\nabla\cdot a\nabla u + \varepsilon\mathcal{L}_n u= \nabla \cdot g \mbox{\ \ \ \hspace{0.18cm} in $\{|x| < R\} \cap D$}\\
 \ \ u( \cdot) = 0  \ \mbox{ \hspace{2.75cm} in $\{|x| < R\} \cap \partial D$,}
\end{cases}
\end{align*}
if
\begin{itemize}
\item[(i)] for all $i=1, ... , d$, there exist weak $\partial_i u, \partial_i^n u \in L^2(|x| < R)$;
\item[(ii)] $u = 0$ almost everywhere in $\{|x| < R\}\setminus D$;
\item[(iii)] for all $v$ satisfying (i) and compactly supported in $\{|x| < R\}$, it holds
\begin{align*}
\int \nabla v\cdot a \nabla u + \varepsilon \sum_{i=1}^d \int \partial_i^nv \ \partial_i^n u =  -\int \nabla v \cdot g.
\end{align*}
\end{itemize}
\end{definition}
Analogously we may consider solutions on outer domains by substituting in Definition \ref{R.6} the domain $\{|x| < R \}$ with $\{|x| > R\}$.

\medskip

We start with two variants of Lemma 4 of \cite{BGO}; while this last one is a statement for ensembles of locally a-harmonic functions, the following Lemma \ref{BGO2} takes into account the new perturbation term $\mathcal{L}_n$ and the more general domain $D$.
If $d >2$, then Lemma \ref{BGO3} is a further generalisation to the case of functions solving $-\nabla \cdot a \nabla u + \varepsilon \mathcal{L}_n u=0$ on outer domains. We postpone the proofs of Lemma \ref{BGO2} and Lemma \ref{BGO3} to the Appendix.
\begin{lemma}\label{BGO2}
For a radius $R > 0$ and $a\in \Omega$, we consider a $\sigma$-finite measure $\mu$ on functions $u$ satisfying in the sense of Definition \ref{R.6}
\begin{align}\label{Ex.39a}
\begin{cases}
 -\nabla\cdot a\nabla u + \varepsilon\mathcal{L}_n u=0 \mbox{\ \ \ \hspace{0.35cm} in $\{|x| < 2R\}\cap D$}\\
 \ \ u = 0  \ \ \mbox{\hspace{2.76cm} in $\{|x| < 2R\}\cap \partial D$,}
\end{cases}
\end{align}
with $\varepsilon \geq 0$.
Then we have
\begin{equation}\label{b210a}
\int \int_{|x|<R}|\nabla u|^2 \dx \,\mathrm{d}\mu \lesssim
\sup_{F}{\int |F u|^2 \mathrm{d}\mu},
\end{equation}
where the supremum runs over all linear functionals $F$ bounded in the sense of
\begin{equation}\label{b29a}
|Fv|^2\leq \int_{|x|\le 2R}|\nabla v|^2 \dx,
\end{equation}
with v satisfying (i) and (ii) of Definition \ref{R.6}.
\end{lemma}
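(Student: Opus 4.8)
\smallskip
\noindent\textit{Proof strategy for Lemma \ref{BGO2}.}
The plan is to follow the duality argument behind Lemma~4 of \cite{BGO}, supplying the two extra ingredients that are needed to absorb the perturbation $\varepsilon\mathcal{L}_n$ and the general domain $D$.

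I would first recast \eqref{b210a} by Riesz representation. Let $V$ be the closure in $[L^2(\{|x|<2R\})]^d$ of the (linear) set of gradients $\nabla u$ of functions $u$ solving \eqref{Ex.39a}, let $P_V$ denote the orthogonal projection onto $V$, and set $\mathcal{M}:=\int\nabla u\otimes\nabla u\,\mathrm{d}\mu(u)$, a non-negative symmetric operator on $[L^2(\{|x|<2R\})]^d$ with range in $V$. After extension, every $F$ admissible in the sense of \eqref{b29a} is of the form $Fv=\langle\nabla v,h\rangle_{L^2(\{|x|<2R\})}$ with $h\in V$ and $\|h\|_{L^2(\{|x|<2R\})}\le1$, so the right-hand side of \eqref{b210a} equals $\sup_{\|h\|\le1}\langle\mathcal{M}h,h\rangle=\|\mathcal{M}\|$; by Fubini the left-hand side equals $\int_{|x|<R}\int|\nabla u|^2\,\mathrm{d}\mu\,\mathrm{d}x=\operatorname{tr}(\mathbf{1}_{\{|x|<R\}}\,\mathcal{M}\,\mathbf{1}_{\{|x|<R\}})$. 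Since $\mathcal{M}=P_V\mathcal{M}P_V$ and $\operatorname{tr}(\mathbf{1}_{\{|x|<R\}}\mathcal{M}\mathbf{1}_{\{|x|<R\}})\le\|\mathcal{M}\|\operatorname{tr}(P_V\,\mathbf{1}_{\{|x|<R\}}\,P_V)$, it is enough to prove the $a$-deterministic bound
\[
\operatorname{tr}\bigl(P_V\,\mathbf{1}_{\{|x|<R\}}\,P_V\bigr)=\sum_j\int_{|x|<R}|\nabla u_j|^2\ \lesssim\ 1 ,
\]
the constant depending only on $d$ and $\lambda$, for every orthonormal system $\{\nabla u_j\}_j$ in $L^2(\{|x|<2R\})$ with each $u_j$ solving \eqref{Ex.39a} --- such systems contain orthonormal bases of $V$, and the trace is basis-independent.

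The input that makes this trace finite is the analogue of the inner-regularity estimate of \cite{BGO}: a hole-filling (Widman-type) Caccioppoli inequality for solutions of \eqref{Ex.39a}. For $\rho<\sigma\le2R$ one tests the equation for $u$ with $\eta^{2n}u$, where $\eta$ is a smooth cutoff equal to $1$ on $\{|x|<\rho\}$ and supported in $\{|x|<\sigma\}$; this is admissible because for $\varepsilon>0$ the hyper-elliptic part forces $u\in H^{2n-1}_{\mathrm{loc}}$, so all derivatives $\partial_i^k u$, $k\le n$, produced by the Leibniz rule lie in $L^2_{\mathrm{loc}}$. Using \eqref{KC} for the $-\nabla\cdot a\nabla$ part, the coercivity of $\mathcal{L}_n$ for the $\varepsilon$-part, Gagliardo--Nirenberg interpolation to reabsorb the intermediate-derivative cross terms, and a Poincar\'e inequality on the fat annulus $\{\rho<|x|<\sigma\}$ --- here the boundary condition $u=0$ on $\{|x|<2R\}\cap\partial D$ together with $|\partial D|=0$ lets one extend $u$ by zero off $D$ and apply \eqref{KC} and Poincar\'e on the annulus intersected with $D$ with $D$-independent constants --- one obtains
\[
\int_{|x|<\sigma/2}\Bigl(|\nabla u|^2+\varepsilon\sum_i|\partial_i^n u|^2\Bigr)\ \le\ \theta\int_{|x|<\sigma}\Bigl(|\nabla u|^2+\varepsilon\sum_i|\partial_i^n u|^2\Bigr)
\]
for a fixed $\theta=\theta(d,\lambda)\in(0,1)$, uniformly in $\varepsilon\ge0$. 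Iterating over the dyadic scales between $R$ and $2R$ gives geometric decay of the energy of a single solution under shrinking of the ball. Combining this with the (at most polynomial) bound on the number of mutually orthogonal solutions that can carry a fixed fraction of their $\{|x|<2R\}$-energy at a given dyadic scale --- the counting step performed in the proof of Lemma~4 of \cite{BGO} --- the geometric gain beats the multiplicity, the series $\sum_j\int_{|x|<R}|\nabla u_j|^2$ converges with a bound depending only on $d,\lambda$, and \eqref{b210a} follows.

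The main obstacle I anticipate is exactly the Caccioppoli/hole-filling estimate \emph{uniformly in $\varepsilon\ge0$}: after the Leibniz rule the term $\varepsilon\mathcal{L}_n$ produces a whole string of cross terms containing $\partial_i^k u$ with $0<k<n$, and these have to be controlled --- by interpolation and absorption --- by the two quantities $\int|\nabla u|^2$ and $\varepsilon\sum_i\int|\partial_i^n u|^2$ on the left-hand side, with constants that neither degenerate as $\varepsilon\downarrow0$ nor depend on $D$. The remaining steps are a careful but essentially routine adaptation of the argument of \cite{BGO}.
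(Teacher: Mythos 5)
Your operator-theoretic reformulation is fine: writing the right-hand side of \eqref{b210a} as the operator norm of $\mathcal{M}=\int\nabla u\otimes\nabla u\,\mathrm{d}\mu$ and the left-hand side as a trace, so that the lemma reduces to the deterministic bound $\sum_j\int_{|x|<R}|\nabla u_j|^2\lesssim 1$ for every family of solutions of \eqref{Ex.39a} whose gradients are orthonormal in $L^2(\{|x|<2R\})$, is a legitimate and essentially equivalent way of phrasing the exchange of $\sup_F$ and $\int\mathrm{d}\mu$. The gap is in how you propose to prove this trace bound. A hole-filling inequality $\int_{|x|<\sigma/2}E\le\theta\int_{|x|<\sigma}E$ with a fixed $\theta=\theta(d,\lambda)<1$ cannot do it: between $R$ and $2R$ there are only $O(1)$ dyadic scales, so iterating it yields nothing beyond $\int_{|x|<R}|\nabla u_j|^2\le\theta^{O(1)}\int_{|x|<2R}|\nabla u_j|^2$, and summing this over an infinite orthonormal family gives $\theta^{O(1)}\cdot\infty$; no finiteness of the trace follows. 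The ``counting step'' you attribute to Lemma 4 of \cite{BGO} (polynomially many mutually orthogonal solutions carrying a fixed fraction of their energy at a given dyadic scale) is not what that proof does, and inside the fixed annulus $\{R<|x|<2R\}$ there is no scale separation that would produce such a bound; no mechanism for it is given. (A secondary issue: your $\theta<1$ requires a Poincar\'e inequality with mean subtraction on the annulus, but subtracting a constant is incompatible with the admissibility requirement that test functions vanish outside $D$, so even the single-solution hole-filling needs care.)

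What is actually needed, and what the paper proves, is an inner-regularity estimate that controls the inner Dirichlet energy by a norm of \emph{negative} order $l>d/2$ on the larger set: after reduction to cubes, inequality \eqref{b2} bounds $\int_{(-\frac{\pi}{4},\frac{\pi}{4})^d}|\nabla u|^2$ by $\sum_{k\neq0}|k|^{-2l}|\mathcal{F}u(k)|^2$, where the Fourier cosine coefficients \eqref{b4} are (after Poincar\'e normalisation) admissible functionals in the sense of \eqref{b29a}. The summability of the weights $|k|^{-2l}$ for $l>d/2$ is exactly what makes your trace finite (Bessel in each $k$), equivalently what allows the supremum over $F$ and the $\mu$-integral to be exchanged with a constant depending only on $d,\lambda$. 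Getting from a one-derivative Caccioppoli gain to this $H^{-l}$-type control is where the real work lies: the paper iterates the two-tier interpolation inequalities \eqref{int1}--\eqref{int2} for the Neumann Laplacian and combines them with the Caccioppoli estimate \eqref{Ex.11} adapted to $-\nabla\cdot a\nabla+\varepsilon\mathcal{L}_n$, whose cross terms are absorbed uniformly in $\varepsilon\ge0$ via the further interpolation estimate \eqref{Ex.30}. Your single Caccioppoli/hole-filling step only buys one derivative and cannot reach a norm weak enough for any counting argument, so as written the proof does not close.
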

\begin{lemma}\label{BGO3}
Let $d>2$. For a radius $R > 0$ and $a\in \Omega$, we consider a \mbox{$\sigma$-finite} measure $\mu$ on functions $u$ with finite Dirichlet energy in $\{|x| >R\}$ and satisfying in the sense of Definition \ref{R.6}
\begin{align}\label{Ex.39}
\begin{cases}
 -\nabla\cdot a\nabla u + \varepsilon\mathcal{L}_n u=0 \mbox{\ \ \ \hspace{0.35cm} in $\{|x| > R\}\cap D$}\\
 \ \ u = 0  \ \ \mbox{\hspace{3cm} in $\{|x| > R\}\cap \partial D$,}
\end{cases}
\end{align}
with $\varepsilon \geq 0$.
Then we have
\begin{equation}\label{b210}
\int \int_{|x|> 4R}|\nabla u|^2 \dx\, \mathrm{d}\mu \lesssim
\sup_{F}{\int |F u|^2 \mathrm{d}\mu},
\end{equation}
where the supremum runs over all linear functionals $F$ bounded in the sense of
\begin{equation}\label{b29}
|Fv|^2\leq \int_{|x| > R}|\nabla v|^2 \dx,
\end{equation}
with v satisfying (i) and (ii) in the sense Definition \ref{R.6} where the set $\{|x| < R\}$ is substituted by $\{|x| > R \}$.
\end{lemma}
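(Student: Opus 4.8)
\emph{The plan} is to recast \eqref{b210} as a purely deterministic trace estimate and then prove the latter by a hole-filling/Caccioppoli argument across dyadic exterior shells, in the spirit of the interior Lemma~\ref{BGO2} and of Lemma~2 of \cite{BGO}. Because $d>2$, the exterior Sobolev and Hardy inequalities make
\[
\mathcal H:=\Bigl\{\,v:\ {\textstyle\int_{|x|>R}}|\nabla v|^{2}<\infty,\ \ v=0\ \text{a.e. on}\ \{|x|>R\}\setminus D,\ \ v\to0\ \text{at}\ \infty\,\Bigr\}
\]
a Hilbert space for $\langle v,w\rangle_{\mathcal H}=\int_{|x|>R}\nabla v\cdot\nabla w$, and the functionals admissible in \eqref{b29} are exactly the unit ball of $\mathcal H^{*}$. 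By Riesz representation, $\sup_{F}\int|Fu|^{2}\,\d\mu=\|\Pi\|_{\mathcal H\to\mathcal H}$, where $\Pi\geq0$ is the operator with $\langle\Pi g,g\rangle_{\mathcal H}=\int\langle g,u\rangle_{\mathcal H}^{2}\,\d\mu(u)$; if $\|\Pi\|=+\infty$ there is nothing to prove. The solution set $\mathcal S$ of \eqref{Ex.39} is a closed subspace of $\mathcal H$ (to pass to the limit in the $\varepsilon\mathcal L_{n}$ term one uses the $\varepsilon$-Caccioppoli bound below), and $\mu$ is carried by $\mathcal S$; writing $B\geq0$ for the operator on $\mathcal S$ with $\langle Bu,u\rangle_{\mathcal H}=\int_{|x|>4R}|\nabla u|^{2}$, Tonelli's theorem gives
\[
\int\!\!\int_{|x|>4R}|\nabla u|^{2}\,\dx\,\d\mu=\operatorname{tr}\!\bigl(B\,\Pi\bigr)\le\|\Pi\|\;\operatorname{tr}_{\mathcal S}(B).
\]
So it is enough to prove the deterministic bound $\operatorname{tr}_{\mathcal S}(B)\lesssim1$, i.e.\ that $u\mapsto\mathbf 1_{\{|x|>4R\}}\nabla u$ is Hilbert--Schmidt on $\mathcal S$ with norm controlled by $d$ and $\lambda$.

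The deterministic core is an $\varepsilon$-uniform hole-filling inequality for $-\nabla\cdot a\nabla+\varepsilon\mathcal L_{n}$ on exterior annuli. For $j\geq1$ one tests \eqref{Ex.39} with $\eta^{2n}(u-c)$, where $\eta\equiv0$ on $\{|x|<2^{j}R\}$, $\eta\equiv1$ on $\{|x|>2^{j+1}R\}$, $|\partial^{\alpha}\eta|\lesssim(2^{j}R)^{-|\alpha|}$ for $|\alpha|\leq n$, and $c$ is the average of $u$ over $\{2^{j}R<|x|<2^{j+1}R\}$. Since $\mathcal L_{n}$ kills constants $c$ is free, and the special shape $\mathcal L_{n}=\sum_{i}(-\partial_{i}^{2})^{n}$ with $n$ odd makes the integrations by parts produce, coordinate by coordinate, one sign-definite top-order term plus lower-order terms that Young's inequality absorbs; the scale-invariant Poincar\'e inequality on the annulus then yields $\theta=\theta(d,\lambda)\in(0,1)$ with
\[
\int_{|x|>2^{j+1}R}\!\!\Bigl(|\nabla u|^{2}+\varepsilon{\textstyle\sum_{i}}|\partial_{i}^{n}u|^{2}\Bigr)\ \le\ \theta\int_{|x|>2^{j}R}\!\!\Bigl(|\nabla u|^{2}+\varepsilon{\textstyle\sum_{i}}|\partial_{i}^{n}u|^{2}\Bigr).
\]
Iterating (and controlling the $\varepsilon$-term at scale $2R$ by $\int_{|x|>R}|\nabla u|^{2}$ via one more application of the same test function) gives $\int_{|x|>2^{j}R}|\nabla u|^{2}\lesssim\theta^{\,j}\int_{|x|>R}|\nabla u|^{2}$ for every $u\in\mathcal S$: the far-field energy of a solution decays geometrically across dyadic shells at a $(d,\lambda)$-rate.

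The very same test function also yields $\int_{|x|>2^{j+1}R}|\nabla u|^{2}\lesssim(2^{j}R)^{-2}\int_{2^{j}R<|x|<2^{j+1}R}|u-c|^{2}$, so by compact Sobolev embedding on each annulus the unit ball of $\mathcal S$ is, after restriction to any far shell, precompact in $L^{2}_{\mathrm{loc}}$; hence $B$ is compact. Its ordered eigenvalues $\lambda_{1}\geq\lambda_{2}\geq\cdots$ then inherit from the shell-decay a \emph{summable} bound, $\operatorname{tr}_{\mathcal S}(B)=\sum_{k}\lambda_{k}\lesssim1$: a unit-$\mathcal H$-norm solution orthogonal to the first $k-1$ eigenspaces must already have $\int_{|x|>2^{j}R}|\nabla u|^{2}$ exponentially small as soon as the ($a$-independent, dimensionally bounded) number of ``low modes'' seen on $\{2^{j}R<|x|<2^{j+1}R\}$ falls below $k$ --- the exterior analogue of the entropy count behind Lemma~2 of \cite{BGO}. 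This proves $\operatorname{tr}_{\mathcal S}(B)\lesssim1$, hence \eqref{b210}. The main obstacle is the interplay of the last two steps: obtaining the hole-filling with constants \emph{uniform in} $\varepsilon$ while carrying the $2n$-th order term on a domain where $u$ has no data on $\{|x|=R\}$ is exactly what forces the shape of $\mathcal L_{n}$ and the parity of $n$; and upgrading the uniform shell-decay (which only gives $B\leq\theta^{2}I$) to the trace bound demands the quantitative, coefficient-independent control on the number of surviving modes. Everything else --- the reduction above, the exterior Hardy/Sobolev inequalities (valid precisely for $d>2$), and reducing a $\sigma$-finite $\mu$ to a finite one by exhaustion --- is routine.
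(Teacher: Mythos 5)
Your opening reduction is sound and is, in effect, a repackaging of what the lemma asserts: proving \eqref{b210} amounts to showing that the quadratic form $u\mapsto\int_{|x|>4R}|\nabla u|^2$ is dominated, on the space $\mathcal S$ of solutions of \eqref{Ex.39}, by a trace-class form built from admissible functionals, with trace $\lesssim 1$; the paper proves exactly this by exhibiting an explicit countable family of admissible functionals with summable weights. The gap lies in your two deterministic steps. First, the hole-filling inequality is not available as stated: the test function $\eta^{2n}(u-c)$, with $c$ the annulus average, does not vanish outside $D$ and hence is not an admissible test function in the sense of Definition \ref{R.6} as soon as $\partial D$ meets the annulus, and the mean-zero Poincar\'e inequality you invoke is being applied to a function constrained to vanish on $\partial D$, not to have zero mean. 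Worse, the conclusion itself --- geometric decay of the exterior energy across dyadic shells at a rate $\theta=\theta(d,\lambda)<1$ uniform in the domain --- is false: take $a=\mathrm{id}$, $d=3$, and $D$ the complement of a sequence of tiny closed balls centred at points $x_j$ with $|x_j|\sim 2^jR$, the radii chosen so that the capacities are $2^{-\delta j}$; the finite-energy harmonic function in $\{|x|>R\}\cap D$ vanishing on the holes (roughly, a constant minus the sum of the capacitary potentials) solves \eqref{Ex.39}, and its shell-to-shell energy ratio is $\simeq 2^{-\delta}$, arbitrarily close to $1$. This is precisely why the paper does not subtract constants in its exterior Caccioppoli estimate (\ref{CC3}), and instead treats the ``constant mode'' by the separate functional $\mathcal{F}_0 u=\fint u$ over the annulus, whose admissibility in the sense of \eqref{b29} comes from the exterior Sobolev inequality --- this is where $d>2$ enters.

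Second, even granting a uniform decay, your passage from it to $\operatorname{tr}_{\mathcal S}(B)\lesssim 1$ is only asserted. Decay bounds the operator norm of the far-field restriction (as you concede), and compact embeddings give compactness of $B$, but neither yields summability of its eigenvalues; the ``coefficient-independent count of surviving modes'' is the actual content of the lemma and cannot be imported by analogy with Lemma 2 of \cite{BGO}. The paper supplies this count explicitly: after a cut-off, the exterior oscillation of a solution is bounded via the interpolation inequality (\ref{b7}) and a Caccioppoli estimate for $-\nabla\cdot a\nabla+\varepsilon\mathcal{L}_n$ compatible with the boundary conditions by $\sum_{k\neq 0}|k|^{-2l}|\tilde{\mathcal F}u(k)|^2$, each cosine coefficient being an admissible functional and the weights summable for $l>d/2$, and the remaining mean value is handled by $\mathcal{F}_0$ as above; summing these weighted functionals and integrating in $\mu$ gives \eqref{b210}. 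Without an argument of this quantitative type, your proposal does not close.
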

Analogously to Theorem \ref{Teo1}, $\lesssim$ means $\le C$ with a generic $C=C(d,\lambda)$.

\medskip

We remark  that the inequalities
\begin{align*}
\int \int_{|x|<R}|\nabla u|^2 \dx\, \mathrm{d}\mu \lesssim
{\int \bigl( \sup_{F} |F u|^2 \bigr)\mathrm{d}\mu},\\
\int \int_{|x| > 4R }|\nabla u|^2 \dx\, \mathrm{d}\mu \lesssim
{\int \bigl( \sup_{F} |F u|^2 \bigr)\mathrm{d}\mu},
\end{align*}
hold trivially by conditions (\ref{b29a}) and (\ref{b29}) and duality (in $L^2$). Roughly speaking, Lemma \ref{BGO2} and Lemma \ref{BGO3} state that the previous inequalities remain true also if we exchange in the r.h.s. the order of the integration in $\mu$ and the supremum over the functionals $F$.

\medskip

We may refer to the result of Lemma 4 of \cite{BGO}, which corresponds to Lemma \ref{BGO2} with $D= \mathbb{R}^d$ and $\varepsilon=0$, as a \textit{compactness} statement for ensembles of locally a-harmonic functions. Indeed, as we show in the appendix, inequality (\ref{b210a}) actually follows by an inner regularity estimate which allows to control the energy of an a-harmonic function u in an interior domain by the $L^2$-norm on $\{|x| < 2R\}$ of $(-\Delta_N)^{-\frac l 2} u$ for any even $l\in \mathbb{N}$. Here, $-\Delta_N$ denotes the Laplacian with Neumann boundary conditions. This last estimate basically implies that in the space of locally a-harmonic functions, the local $W^{1,2}$-norm (the strongest norm which is meaningful to consider for weak solutions of a variable-coefficient and uniformly elliptic second-order operator) is actually equivalent to much weaker norms, provided we consider a slightly bigger domain. Therefore, in this sense we may say that the space of !
 locally a-harmonic functions is ``close'' to being finite-dimensional, in which case all the norms are equivalent.
The previous lemmas state similar compactness results in the case of the approximate operator $-\nabla \cdot a \nabla + \varepsilon \mathcal{L}_n$.

\bigskip

{\sc Proof of Theorem \ref{Teo1}.} Throughout the whole proof we assume $D \subset \mathbb{R}^d$ to be a generic open domain satisfying (\ref{D}) which is  also bounded in at least one direction if $d=2$.

\medskip

{{ \sc Step} 1: Construction of the approximate family $\{ G_{\varepsilon, D}(a; \cdot ,y)\}_{\varepsilon \downarrow 0}$}. 
We start by showing that for every $a \in \Omega$, $D \subset \mathbb{R}^d$, $\varepsilon >0$ and $y\in \mathbb{R}^d$ there exist $G_{\varepsilon,D}(a, \cdot, y)$ and $\nabla_yG_{\varepsilon}(a; \cdot, y)$, unique weak solutions\footnote{We give the precise definition along the proof of this step.}
 respectively for the problem (\ref{PDa}) and (\ref{PDay}).
 
Given the Hilbert spaces
\begin{align*}
\mathcal{X}:= \{u\in L^{2^*}(\mathbb{R}^d) \ &\ | \ \ \nabla u, \partial_i^nu \in L^2(\mathbb{R}^d), \ i=1, ..., d 
\ \ \text{and \ }   u = 0 \mbox{ \ outside $ D$}\}
\end{align*}
for $d > 2$ and 
\begin{align*}
\mathcal{X}:= \{u\in H^n(\mathbb{R}^d) \ &\ | \ \  u = 0 \mbox{ \ outside $ D$}\}
\end{align*}
for $d=2$, the bilinear form $B: \mathcal{X} \times \mathcal{X} \rightarrow \mathbb{R}$
\begin{align*}
B(u,v)= \int \nabla{v}\cdot a\nabla{u} + \varepsilon \sum_{i=1}^d\int \partial_i^n v \, \partial_i^{n}u
\end{align*}
is bounded thanks to (\ref{bdd}) and coercive in the sense of 
\begin{align}\label{Ex.8a}
B(u,u)\gtrsim \varepsilon \sum_{i=1}^d\int_D |\partial_i^n u|^2+ \lambda\int_D |\nabla{u}|^2,
\end{align}
thanks to (\ref{KC}).
\medskip

Let us first consider the case $d>2$: Sobolev's embedding implies that
\begin{align}\label{Ex.8d}
B(u,u) = 0 \Leftrightarrow u = 0,
\end{align}
i.e. $B$ is non-degenerate. We now argue that $B$ satisfies for every $u \in \mathcal{X}$
\begin{align}\label{Ex.8}
B(u,u)\gtrsim \varepsilon \bigl(\sup_D |u|\bigr) ^2.
\end{align}
Thanks to the coercivity condition (\ref{Ex.8a}), inequality (\ref{Ex.8}) is implied by
\begin{align*}
\lambda\int_D |\nabla{u}|^2+ \sum_{i=1}^d\int_D |\partial_i^n u|^2 \gtrsim \biggl(\sup_D |u|\biggr) ^2,
\end{align*}
which can be restated by passing to dual (i.e. Fourier) variables $k$ as
\begin{align}\label{Ex.9}
\int (|k|^2+ \sum_{i=1}^d|k_i|^{2n} ) |\hat{u}|^2 dk \gtrsim \biggl(\int |\hat{u }|dk \biggr)^2.
\end{align}
By Cauchy-Schwarz's inequality, it holds
\begin{align}\label{Ex.9a}
\int|\hat{u}|dk  &\leq \left(\int\frac{dk}{|k|^2+ \sum_{i=1}^d|k_i|^{2n}}\right)^\frac{1}{2}\left(\int(|k|^2+ \sum_{i=1}^d|k_i|^{2n})|\hat{u}|^2dk\right)^\frac{1}{2}\notag\\
 &\stackrel{ }{\lesssim} \left(\int\frac{dk}{|k|^2+ |k|^{2n}}\right)^\frac{1}{2}\left(\int(|k|^2+ \sum_{i=1}^d|k_i|^{2n})|\hat{u}|^2dk\right)^\frac{1}{2}.
\end{align}
As the conditions $n > \frac d 2 + 1 > \frac d 2$ and $d > 2$ imply that 
\begin{align*}
\int\frac{dk}{|k|^2+ |k|^{2n}} < +\infty,
\end{align*}
from (\ref{Ex.9a}) we obtain (\ref{Ex.9}) and thus (\ref{Ex.8}). Inequality (\ref{Ex.8}) in particular yields that for the linear functional $F v := v(y)$, we have for every $u \in \mathcal{X}$
\begin{align*}
B(u,u) \stackrel{(\ref{Ex.8})}{\gtrsim} \varepsilon \bigl(\sup_D |u|\bigr) ^2 \gtrsim \varepsilon |F u|^2,
\end{align*}
which implies by Riesz's representation theorem that there exists a unique $G_{D,\varepsilon}(a; \cdot , y) \in \mathcal{X}$ weakly solving (\ref{PDa}). 
As we have shown that for every $a\in \Omega$, the map $D \ni y \rightarrow G_{\varepsilon, D}(a, \cdot ,y)$ is well defined, we now also show that for every $a \in \Omega$ and $y\in D$ there exists  $\nabla_y G_{\varepsilon, D}(a; \cdot , y)$, unique (weak) solution\footnote{The fact
that the solution of (\ref{PDay}) is actually the $y$-gradient of $G_D(a; \cdot, \cdot)$ is rigorously proven by first showing that, on the one hand, by symmetry (\ref{Ex.28}), the difference quotients $\frac{1}{h} \bigl( G_{D,\varepsilon}(a; \cdot , \cdot+he_i)- G_{D,\varepsilon}(a; \cdot , \cdot+he_i)\bigr)$
are uniformly bounded in $\mathcal{X}$ for $h <<1$, and thus weakly converge up to subsequences. On the other hand, letting $ h \rightarrow 0^+$ in the equation solved by $\frac{1}{h} \bigl( G_{D,\varepsilon}(a; \cdot , y+he_i)- G_{D,\varepsilon}(a; \cdot , y+he_i)\bigr)$, we recover (\ref{PDay}) and conclude the argument by uniqueness of the solution.} of (\ref{PDay}).
We appeal again to Riesz's representation theorem : In this case, the linear functional that we need to bound
with $B$ is given by $\tilde F( v ) := \nabla v(y)$. Once again, thanks to the coercivity condition (\ref{Ex.8a}) we conclude the argument if we show that
\begin{align}\label{Ex.8c}
\lambda\int_D |\nabla u|^2+ \sum_{i=1}^d\int_D |\partial_i^n u|^2 \gtrsim \biggl(\sup_D |\nabla u|\biggr) ^2,
\end{align}
or equivalently, by passing in Fourier variables, that
\begin{align*}
\int (|k|^2+ \sum_{i=1}^d|k_i|^{2n} ) |\hat{u}|^2 dk \gtrsim \biggl( \int |k||\hat{u }|dk \biggr)^2.
\end{align*}
Similarly to (\ref{Ex.9a}), we estimate
\begin{align*}
\int|k||\hat{u}|dk  &\leq \left(\int\frac{|k|^2}{|k|^2+ \sum_{i=1}^d|k_i|^{2n}}dk\right)^\frac{1}{2}\left(\int(|k|^2+ \sum_{i=1}^d|k_i|^{2n})|\hat{u}|^2dk\right)^\frac{1}{2}\notag\\
 &\stackrel{ }{\lesssim} \left(\int\frac{dk}{1+ |k|^{2(n-1)}}\right)^\frac{1}{2}\left(\int(|k|^2+ \sum_{i=1}^d|k_i|^{2n})|\hat{u}|^2dk\right)^\frac{1}{2}
\end{align*}
and this time we appeal to $ n - 1 > \frac d 2$ to ensure that
\begin{align*}
\int\frac{dk}{1+ |k|^{2(n-1)}} < +\infty.
\end{align*}

\medskip

Let us now assume that $d=2$: Also in this case, by (\ref{bdd}) and (\ref{KC}),
B is respectively bounded and coercive in the sense of (\ref{Ex.8a}). Our assumption on $D$ and the Dirichlet boundary conditions allow us to appeal to Poincar\'e's inequality
\begin{align}\label{Poi}
\int |u|^2 \lesssim_D \int |\nabla u|^2,
\end{align}
and infer that $B$ is non degenerate in the sense of (\ref{Ex.8d}). We note that once we prove that  also in this case $B$ satisfies inequalities (\ref{Ex.8}) and (\ref{Ex.8c}), we may argue analogously to the case $d>2$ and 
conclude that there exist unique $G_{\varepsilon}(a, \cdot, y)$ and $\nabla_yG_{\varepsilon}(a, \cdot, y)$ solving in $D$ respectively (\ref{PDa}) and (\ref{PDay}) . The argument used above for (\ref{Ex.8c}) is still valid; to show (\ref{Ex.8}), we observe that by Poincar\'e's inequality and (\ref{Ex.8a}) it is enough to prove that
\begin{align*}
\int |u|^2 +  \sum_{i=1}^d\int_D |\partial_i^n u|^2 \gtrsim \biggl(\sup_D | u|\biggr) ^2.
\end{align*}
We rewrite the previous inequality in Fourier variables as
\begin{align*}
\int (1+ \sum_{i=1}^d|k_i|^{2n} ) |\hat{u}|^2 dk \gtrsim \biggl( \int |\hat{u }|dk \biggr)^2
\end{align*}
and estimate
\begin{align*}
\int|\hat{u}|dk  &\leq \left(\int\frac{dk}{(1+ \sum_{i=1}^d|k_i|^{2n})}\right)^\frac{1}{2}\left(\int(1+ \sum_{i=1}^d|k_i|^{2n})|\hat{u}|^2dk\right)^\frac{1}{2}\notag\\
 &\stackrel{ }{\lesssim} \left(\int\frac{dk}{(1+ |k|^{2n})}\right)^\frac{1}{2}\left(\int(1 + \sum_{i=1}^d|k_i|^{2n})|\hat{u}|^2dk\right)^\frac{1}{2}.
\end{align*}
Relying on our assumption $n > \frac d 2 +1 > \frac d 2 $, we have that
\begin{align*}
\int\frac{dk}{1+ |k|^{2n}} < +\infty,
\end{align*}
and thus we infer (\ref{Ex.8}).

\medskip

In addition, we note that uniqueness and the symmetry of the operator $- \nabla\cdot \nolinebreak a \nabla + \varepsilon \mathcal{L}_n$, cf. (\ref{Sym}), yield for all $a\in \Omega$, $z\in \mathbb{R}^d$, $R> 0$, $y \in \mathbb{R}^d$ and almost every $x \in \mathbb{R}^d$ that
\begin{align}
G_{\varepsilon,D}(a;x,y)&=G_{\varepsilon, D}(a;y,x),\label{Ex.28}\\
G_{\varepsilon, D}(a(\cdot+z);x,y)&=G_{\varepsilon, D+z}(a;x+z,y+z),\label{Ex.29}\\ 
G_{\varepsilon, D} (a; R {x}, R{y}) &= R^{2-d}G_{R^{2-2n}\varepsilon,R^{-1}D}(a(R \, \cdot ); {x},  y). \label{Ex.24}
\end{align}
Moreover, we claim that for every compactly supported $f \in L^2(\mathbb{R}^d)$ and every $g \in L^{2}( \mathbb{R}^d)^d$, if $u \in \mathcal{X}$ solves 
\begin{align}\label{approx.pb}
\begin{cases}
-\nabla\cdot a \nabla u + \varepsilon \mathcal{L}_n u = f + \nabla \cdot g \ \ \ \ \mbox{ in $D$}\\
\ \ u= 0 \ \ \ \ \mbox{\hspace{3.5cm} in $\partial D$,}
\end{cases}
\end{align}
in the sense of Definition 1, then we have the representation formula 
\begin{align}\label{representation.f}
u(x) = \int G_{\varepsilon, D}(a; y, x) f(y) \, dy - \int \nabla_y G_{\varepsilon, D}(a; y, x) \cdot g(y) \, dy.
\end{align}
By H\"older's inequality it is immediate to show that for the linear functional $F : \mathcal{X}\ni v \mapsto \int \nabla v \cdot g$ we have $|F v|^2 \lesssim ||g||_{L^2}^2 B(v, v)$. Sobolev's inequality for $d> 2$ and Poincar\'e's inequality for $d=2$ imply 
that also $F: \mathcal{X} \ni v \mapsto \int v f $ satisfies $|F v|^2 \lesssim ||f||_{L^2}^2 B(u, u)$. Therefore, by Riesz's representation thoerem, there exists a unique solution in $\mathcal{X}$ of \eqref{approx.pb}.
It thus remains to show that the r.h.s. of \eqref{representation.f} solves the equation: An application of H\"older's inequality, together with  the assumptions on $f$, $g$ and the fact that 
$G_{\varepsilon, D}(a; \cdot , y) \in \mathcal{X}$, implies that $u$ in \eqref{representation.f} is well-defined and belongs to $\mathcal{X}$. By \eqref{PDa} and \eqref{PDay}, $u$ satisfies the boundary conditions and for every $v \in \mathcal{X}$ we have
\begin{align*}
\int& \nabla_x v(x) \cdot  a \nabla_x \bigl( \int G_{\varepsilon, D}(a; y, x) f(y) - \int \nabla_y G_{\varepsilon, D}(a; y, x) \cdot g(y) \bigr)\\
& \quad +  \varepsilon \sum_{i=1}^d\int \partial_{x_i}^m v(x) \, \partial_{x_i}^m\bigl(  \int G_{\varepsilon, D}(a; y, x) f(y) - \int \nabla_y G_{\varepsilon, D}(a; y, x) \cdot g(y) \bigr)\\
&\stackrel{\eqref{Ex.28}}{=} \int \biggl( \int \nabla_x v(x) \cdot a \nabla_x G_{\varepsilon, D}(a; x, y) + \varepsilon \sum_{i=1}^d \int \partial_{x_i}^m v(x) \, \partial_{x_i}^m G_{\varepsilon, D}(a; x, y) \biggr) f(y)  \\
&\quad - \int \biggl( \int \nabla_x v(x) \cdot a \nabla_x\nabla_y G_{\varepsilon, D}(a; x, y) + \varepsilon \sum_{i=1}^d \int \partial_{x_i}^m v(x) \, \partial_{x_i}^m\nabla_y G_{\varepsilon, D}(a; x, y) \biggr) \cdot g(y)\\
&\stackrel{\eqref{PDa}- \eqref{PDay}}{=} \int v(y) f(y) - \int \nabla v(y) \cdot g(y).
\end{align*} 
We thus established \eqref{representation.f}.
\bigskip

{{\sc Step 2}: Uniform bounds for $\{G_{\varepsilon,D}\}_{\varepsilon>0}$ if $d>2$.} We presently argue that the family $\{G_{\varepsilon, D}(a, \cdot, \cdot)\}$ constructed in the previous step satisfies
(\ref{Teo1B})-(\ref{Teo1A2}), and (\ref{Teo1B})-(\ref{Teo1C})-(\ref{Teo1D}). By the properties (\ref{Ex.29})-(\ref{Ex.24}), without loss of generality it is sufficient to fix $z=0$ and $R=1$, i.e. to prove that for all
$\alpha > d-2$,
\begin{align}
\int_{ |y | < 1}\int_{|x|>8}&|\nabla G_{\varepsilon, D}(a; x,y)|^2 \lesssim 1, \label{Ex.7h}\\
\int_{ |y | < 1}\int_{|x| < 1}&|x-y|^{\alpha}\bigl( |\nabla_y G_{\varepsilon,D}(a; x,y)|^2 +|\nabla G_{\varepsilon,D}(a; x,y)|^2 \bigr)\lesssim 1,  \label{Ex.7f}
\end{align}
and that for every $1 \leq p < \frac{d}{d-2}$ and $1 \leq q < \frac{d}{d-1}$
\begin{align}
\int_{|y | < 1 }\int_{|x| < 1}&|G_{\varepsilon,D}(a; x,y)|^p \lesssim 1, \label{Ex.7g} \\
\int_{|y | < 1 }\int_{|x| < 1}&|\nabla_x G_{\varepsilon, D}(a; x,y)|^q+|\nabla_y G_{\varepsilon, D}(a; x,y)|^q \lesssim 1 ,\label{Ex.7}\\
\int_{ |y | < 1}\int_{|x|> 4}&|\nabla\nabla G_{\varepsilon,D}(a; x,y)|^2\lesssim 1. \label{Ex.7e}
\end{align}
For a given $L^2$-vector field $g$ with support in $\{|x| > 2 \}\cap D$, the solution\footnote{Also in this case, we consider u to be the weak solution in the sense of Definition \ref{R.6}, this time with $R=+\infty$.} of 
\begin{align*}
\begin{cases}
 -\nabla\cdot a\nabla u + \varepsilon\mathcal{L}_n u=\nabla\cdot g \mbox{\hspace{0.1cm} \ \  in $D$}\\
 \ \ u= 0 ,\ \ \ \ \mbox{\hspace{3cm} in $\partial D$}
\end{cases}
\end{align*}
satisfies by (\ref{KC}) the energy estimate $\lambda \int |\nabla u|^2 \leq \int |g|^2$. In addition, since by H\"older's inequality
\begin{align*}
\int_{|y|< 1} |u|^2 \lesssim \bigl(\int_{|y|< 1} |u|^{\frac{2d}{d-2}} \bigr)^{\frac{d-2}{d}}
\end{align*}
and the scale-invariant Sobolev Inequality 
\begin{align*}
\bigl(\int |u|^{\frac{2d}{d-2}} \bigr)^{\frac{d-2}{d}}\lesssim \int |\nabla u|^2,
\end{align*}
we have that
\begin{align*}
\int_{|y|<1} |u|^2 \lesssim \int |\nabla u|^2,
\end{align*}
and thus infer
\begin{align*}
\int_{|y|< 1} |u(y)|^2 \lesssim \int |g(y)|^2.
\end{align*}
Thus, the previous estimate and the energy estimate respectively yield, thanks to the representation formula \eqref{representation.f}, that
\begin{align} 
\int_{|y|< 1}\big|\int_{|x|> 2}\nabla G_{\varepsilon,D}(a;x,y) g(x)\big|^2\lesssim\int |g|^2, \label{Ex.6}\\
\int_{|y|< 1}\big|\int_{|x|> 2}\nabla\nabla G_{\varepsilon,D}(a;x,y) g(x)\big|^2\lesssim\int |g|^2 \label{Ex.6a}.
\end{align}
We now apply Lemma \ref{BGO3} to the families $\{ G_{\varepsilon, D}(a; \cdot, y)\}_{|y|< 1}$ and $\{\nabla_yG_{\varepsilon, D}(a; \cdot , y) \}_{|y|< 1}$, with functionals given by $\int \cdot g$ and measure $\mu( dy)= dy_{| \{ |y|< 1 \}}$.
We specify that we are allowed to use Lemma \ref{BGO3} on both families since, by the previous step, they respectively solve the problems (\ref{PDa}) and (\ref{PDay}) and thus are a-harmonic in $\{|x|> 2\}\cap D$ for ($\mu$-)almost every $y$ such that $|y|<1$.
Therefore, from (\ref{Ex.6}) and (\ref{Ex.6a}) we get
\begin{align}\label{O.6}
\int_{|y|< 1}\int_{|x|> 8} |\nabla G_{\varepsilon,D}(a; x,y)|^2 + |\nabla\nabla G_{\varepsilon, D}(a;x,y)|^2 \lesssim 1,
\end{align}
which implies the bounds (\ref{Ex.7h}) and (\ref{Ex.7e}).\\
We now turn to inequality (\ref{Ex.7f}): By the shift invariant property (\ref{Ex.29}) and the invariance under scaling of the previous argument, it follows from (\ref{O.6}) that for all $w \in \mathbb{R}^d$ and $r > 0$ it holds
\begin{align}
\int_{|y-w|<r} \int_{|x-w|>8r}|\nabla G_{\varepsilon,D}(a;x,y)|^2 \lesssim r^2,\label{Ex.28d}\\
\int_{|y-w|<r} \int_{|x-w|>8r}|\nabla\nabla G_{\varepsilon, D}(a; x,y)|^2 \lesssim 1.\label{Ex.28c}
\end{align}
We appeal to the scale-invariant Sobolev Inequality in the exterior domain\footnote{To show Sobolev's inequality in the outer domain $\{|x|> R\}$ we argue as follows: By scale invariance, we may reduce ourselves to the domain $\{|x|>1\}$;
moreover, by standard approximation, we may assume $u$ to be smooth and zero outside a ball big enough. 
We now extend u inside $\{|x|<1\}$ using the radial reflection $x \rightarrow \frac{x}{|x|^2}$, apply Sobolev's inequality on the whole space and conclude by observing that, due to our choice of extension, the Dirichlet integral in $\{|x|< 1\}$
can be controlled by the Dirichlet integral in $\{|x|>1 \}$.}  $\{|x-w| > 8r \}$ to obtain from (\ref{Ex.28c}) that
\begin{align}\label{Ex.26}
\int_{|y-w|<r} \bigl(\int_{|x-w|>8r}|\nabla_y G_{\varepsilon, D}(a; x,y)|^{\frac{2d}{d-2}}\bigr)^{\frac{d-2}{d}} \lesssim 1.
\end{align}
Thus, H\" older's inequality in the x-variable yields
\begin{align}\label{Ex.44}
\int_{|y-w|< r}&\int_{8r<|x-w|< 16r}|\nabla_y G_{\varepsilon,D}(a; x,y)|^2 \notag\\
&\lesssim  r^2 \int_{|y-w|<r} \bigl(\int_{|x-w|> 8r}|\nabla_y G_{\varepsilon, D}(a; x,y)|^{\frac{2d}{d-2}}\bigr)^{\frac{d-2}{d}}\stackrel{(\ref{Ex.26})}{\lesssim} r^2.
\end{align}
We now cover the ball $\{ |y|<1 \}$ with the union of smaller balls of radius $0< r < 1$, each of them centred in $n \sim r^{-d}$ points $\{w_i\}_{i=1}^n$ of the lattice $\frac{r}{\sqrt{d}} \mathbb{Z}^d$. Then, estimates (\ref{Ex.44}) and (\ref{Ex.28d}) yield
\begin{align}\label{Ex.44b}
\int_{|y|< 1}&\int_{9r <|x-y|< 15r}|\nabla G_{\varepsilon,D}(a; x,y)|^2 +|\nabla_y G_{\varepsilon, D}(a; x,y)|^2\notag\\
\lesssim &\sum_{i=1}^n \int_{|y-w_i|<r} \int_{9r < |x-y| < 15r}|\nabla G_{\varepsilon,D}(a; x,y)|^2 +|\nabla_y G_{\varepsilon, D}(a; x,y)|^2\notag\\
\lesssim &\sum_{i=1}^n \int_{|y-w_i|<r} \int_{8r < |x-w_i| < 16r}\hspace{-0.2cm}|\nabla G_{\varepsilon,D}(a; x,y)|^2 +|\nabla_y G_{\varepsilon, D}(a; x,y)|^2 \stackrel{(\ref{Ex.44})-(\ref{Ex.28d})}{\lesssim} r^{2-d}.
\end{align}
It follows from this that for any $\alpha > d-2$ and $0< r < 1$
\begin{align}\label{Ex.44c}
\int_{|y|< 1}\int_{9r <|x-y|< 15r}&|x-y|^{\alpha}\bigl( |\nabla G_{\varepsilon,D}(a; x,y)|^2 +|\nabla_y G_{\varepsilon, D}(a; x,y)|^2\bigr)\notag\\
\lesssim r^{\alpha}\int_{|y|< 1}&\int_{9r <|x-y|< 15r}|\nabla G_{\varepsilon,D}(a; x,y)|^2 +|\nabla_y G_{\varepsilon, D}(a; x,y)|^2 \lesssim r^{\alpha-(d-2)}.
\end{align}
Since $\alpha -(d-2)>0$, summing over dyadic annuli in the x-variable we infer
\begin{align*}
\int_{|y|< 1}\int_{ |x-y|< 2}&|x-y|^{\alpha}\bigl( |\nabla G_{\varepsilon,D}(a; x,y)|^2 +|\nabla_y G_{\varepsilon, D}(a; x,y)|^2\bigr)\lesssim 1,
\end{align*}
and thus (\ref{Ex.7f}).\\
We now claim that from (\ref{Ex.7f}) we obtain (\ref{Ex.7}): If we smuggle in (\ref{Ex.7}) the weight $|x-y|^{\frac \alpha 2 q}$ and apply H\"older's inequality first in x and then in y, we get 
\begin{align*}
\int_{|x| < 1}&\int_{|y | < 1 }|\nabla_x G_{\varepsilon, D}(a; x,y)|^q + |\nabla_y G_{\varepsilon, D}(a; x,y)|^q\\
&\lesssim \biggl(\int_{|x| < 1}\int_{|y | < 1 }|x-y|^{\alpha}\bigl(|\nabla_x G_{\varepsilon, D}(a; x,y)|^2 + |\nabla_y G_{\varepsilon, D}(a; x,y)|^2\bigr)\biggr)^{\frac{q}{2}}\\
& \hspace{0.8cm}\times\biggl(\int_{|x| < 1}\int_{|y | < 1 }|x-y|^{-\frac{q}{2-q}\alpha}\biggr)^{\frac{2-q}{2}},
\end{align*}
and thus (\ref{Ex.7}), as our assumption $1 \leq q < \frac{d}{d-1}$ ensures that there exists an $\alpha > d-2$ such that
\begin{align*}
\int_{|x| < 1}\int_{|y | < 1 }|x-y|^{-\frac{q}{2-q}\alpha} < +\infty.
\end{align*}
It only remains to establish  (\ref{Ex.7g}): We first observe that if we prove an analogy of (\ref{Ex.44}) for $G_{\varepsilon, D}$, namely that for $r > 0$
\begin{align}\label{Ex.45}
\int_{|y|< r}\int_{8r< |x| < 16r}|G_{\varepsilon,D}(a; x,y)|^2 \lesssim r^{4},
\end{align}
then by a scaling and covering argument similar to the one in (\ref{Ex.44b}) and (\ref{Ex.44c}) for $\nabla G_{\varepsilon,D}$ and $\nabla_y G_{\varepsilon,D}$, we infer
\begin{align*}
\int_{|y|< 1}&\int_{6r <|x-y|< 9r}|G_{\varepsilon,D}(a; x,y)|^2 \lesssim r^{4-d},
\end{align*}
and thus, for any $\alpha > d-4$,
\begin{align*}
\int_{|y|< 1}\int_{ |x-y|< 2}|x-y|^{\alpha} |G_{\varepsilon,D}(a; x,y)|^2 \lesssim 1.
\end{align*}
From the previous inequality, we argue as for (\ref{Ex.7}) and obtain that for $1 \leq p < \frac{d}{d-1}$
\begin{align}\label{Ex.45b}
\int_{|x| < 1}&\int_{|y | < 1 }|G_{\varepsilon, D}(a; x,y)|^p \lesssim 1.
\end{align}
We thus established inequality (\ref{Ex.7g}) with $1 \leq q < \frac{d}{d-1}$. To extend the range of the admissible exponents, we apply Poincar\'e - Sobolev's inequality
\begin{align*}
\biggl(\int_{\{|x|<1 \}\cup \{|y| <1 \}} |u|^{p^*} \biggr)^{\frac{1}{p^*}} \lesssim \biggl(\int_{\{|x|<1 \}\cup \{|y| <1 \}} |\nabla u|^{p} \biggr)^{\frac{1}{p}} + \biggl(\int_{\{|x|<1 \}\cup \{|y| <1 \}} |u|^{p} \biggr)^{\frac{1}{p}} 
\end{align*}
and thus estimate
\begin{align*}
\bigl(\int_{|x|<1}\int_{|y|< 1} |G_{\varepsilon, D}(a; x, y)&|^{p^*}\bigr)^{\frac{p}{p^*}}\\
&\lesssim \int_{|x|<1}\int_{|y|<1}|\nabla G_{\varepsilon, D} (a; x, y)|^p + |\nabla_y G_{\varepsilon, D} (a; x, y)|^p\\
&\hspace{1cm} + \int_{|x|<1}\int_{|y|<1}|G_{\varepsilon, D} (a; x, y)|^p.
\end{align*}
Appealing to inequalities (\ref{Ex.45b}) and (\ref{Ex.7}) we get for every $1 \leq p < \frac{d}{d-1}$
\begin{align*}
\int_{|x|<1}\int_{|y|< 1} |G_{\varepsilon, D}(a; x, y)&|^{p^*} \lesssim 1,
\end{align*}
 and therefore the bound (\ref{Ex.7g}) also for $\frac{d}{d-1} \leq q < \frac{d}{d-2}$.

\medskip

We thus only need to prove (\ref{Ex.45}): By the scaling property (\ref{Ex.29}) we may fix $r=1$. We apply Sobolev's inequality in the outer domain $\{ |x| > 8\}$ to get from (\ref{O.6})
\begin{align*}
\int_{|y|< 1} \bigl(\int_{|x|>8}|G_{\varepsilon, D}(a; x,y)|^{\frac{2d}{d-2}}\bigr)^{\frac{d-2}{d}} \lesssim 1,
\end{align*}
and then combine this with H\"older's inequality in the x-variable to conclude
\begin{align*}
\int_{|y|< 1} \int_{8< |x| < 16}|G_{\varepsilon, D}(a; x,y)|^2 \lesssim 1.
\end{align*}

\bigskip

{{\sc Step 3}: Uniform bounds for $\{G_{\varepsilon,D}\}_{\varepsilon>0}$ if $d=2$.}
As in the case $ d > 2$, we prove for the approximate family $\{G_{\varepsilon,D}\}_{\varepsilon>0}$ the bounds  (\ref{Teo1B})-(\ref{Teo1A2}) and  (\ref{Teo1A})-(\ref{Teo1C})-(\ref{Teo1D}). It suffices,  by property (\ref{Ex.29}), to fix $z=0$.
To show (\ref{Teo1A}), we may use the same argument of Step 2: For a given $L^2$-vector field $g$ with support in $\{|x|<2\}\cap D$, the solution\footnote{Also in this case, we consider u to be the weak solution in the sense of Definition \ref{R.6}, this time with $R=+\infty$.} of 
\begin{align*}
\begin{cases}
 -\nabla\cdot a\nabla u + \varepsilon\mathcal{L}_n u=\nabla\cdot g \mbox{\hspace{1.15cm} \ \  in D}\\
 \ \ \partial^{\alpha}u= 0 ,\ \ \ \ \mbox{\small{$0\leq |\alpha| \leq n-1$} \hspace{0.9cm} in $\partial D$}
\end{cases}
\end{align*}
satisfies by (\ref{KC}) the energy estimate $\lambda \int_D |\nabla u|^2 \leq \int |g|^2$ and yields, thanks to the representation formula \eqref{representation.f},
\begin{equation}\label{Ex.6} 
\int_{|x|> 4}\big|\int_{|y|< 2}\nabla\nabla G_{\varepsilon,D}(a;x,y) g(y)\big|^2\lesssim \int |g|^2.
\end{equation}
We now apply Lemma \ref{BGO2} to the family $\{\nabla G_{\varepsilon, D}(a; x, \cdot) \}_{\{|x|> 4\}\cap D}$, with functionals given by $\int \cdot g$ and measure $\mu( dx)= dx_{\{|x|> 4\}\cap D}$. We observe that we are allowed to use Lemma \ref{BGO2} on this family since, by (\ref{Ex.28}), we can identify
\begin{align*}
\nabla G_{\varepsilon, D}(a; x, \cdot)= \nabla_xG_{\varepsilon, D}(a; \cdot, x),
\end{align*}
with $\nabla_xG_{\varepsilon, D}(a; \cdot, x)$ constructed in Step 1 (with exchanged roles of the x and y variable). It follows from (\ref{PDay}) that for $x\in \mathbb{R}^d$ with $|x|> 4$,
\ $\nabla_xG_{\varepsilon, D}(a; \cdot, x)$ is solution of (\ref{Ex.39}) in the domain $\{|y|< 2\} \cap D$. Therefore, from (\ref{Ex.6}) we get by Lemma \ref{BGO2} the desired bound (\ref{Teo1A}).
We remark that since the scale invariant Sobolev's inequality is no more available for $d=2$ we cannot infer also (\ref{Ex.26}).
Appealing to our assumption on $D$ to have at least one bounded direction, we may use as a replacement for Sobolev's inequality the following version of Poincar\`e-Sobolev's estimate\footnote{We postpone the proof to the Appendix.}: Let $D\subset \mathbb{R}^2$ be open and having
at least one bounded direction. Then, for every $2 \leq p < +\infty$, $z\in \mathbb{R}^2$ and $R>0$, it holds
\begin{align}\label{PSI}
\bigl(\int_{|x-z| > R}|u|^p \bigr)^{\frac 1 p} \lesssim_{D,p} \bigl(\int_{|x-z|> R}|\nabla u|^2\bigr)^{\frac 1 2}, 
\end{align}
for every $u \in W^{1,1}_{loc}(\mathbb{R}^2)$ and such that $u =0$ almost everywhere outside $D$. Here, the constant depends on the size of the smallest bounded component of $D$.\\
With the same reasoning used in Step 2, once that we show that for every $\delta > 0$, $z\in \mathbb{R}^2$ and $r>0$ we have
\begin{align}
\int_{|y-z|< r}\int_{4r < |x-z| < 8r}&  |\nabla G_{\varepsilon,D}(a; x,y)|^2 + |\nabla_y G_{\varepsilon,D}(a; x,y)|^2 \lesssim_{D} r^{2-\delta},\label{LE1}\\
\int_{|y-z|< r}\int_{4r < |x-z| < 8r}&  |G_{\varepsilon,D}(a; x,y)|^2 \lesssim_{D} r^{4-\delta }, \label{LE2}
\end{align}
it follows by a covering argument, that
\begin{align*}
\int_{|y-z|< 1}\int_{5r < |x-y| < 7r}&  |\nabla G_{\varepsilon,D}(a; x,y)|^2 + |\nabla_y G_{\varepsilon,D}(a; x,y)|^2 \lesssim_{D} r^{-\delta},\\
\int_{|y-z|< 1}\int_{5r < |x-y| < 7r}&  |G_{\varepsilon,D}(a; x,y)|^2 \lesssim_{D} r^{2-\delta},
\end{align*}
and thus that for every $\alpha> d-2 = 0$
\begin{align}\label{LE4}
\int_{|y-z|< 1}\int_{|x-y|< 2}& |x-y|^\alpha\bigl( |\nabla G_{\varepsilon,D}(a; x,y)|^2 + |\nabla_y G_{\varepsilon,D}(a; x,y)|^2 \bigr)\lesssim_{D} 1,
\end{align}
and for every $\alpha > d-4 = -2$
\begin{align}\label{LE5}
\int_{|y-z|< 1}\int_{|x-y|< 2}& |x-y|^\alpha|G_{\varepsilon,D}(a; x,y)|^2 \lesssim_{D} 1.
\end{align}
We may analogously argue for a general radius $R$ and establish (\ref{LE4})-(\ref{LE5}), and thus bound (\ref{Teo1B}) for any $R> 0$. As shown in Step 2, these estimates also yield (\ref{Teo1C})-(\ref{Teo1D}) by the standard Poincar\'e-Sobolev Inequality.

\medskip

We now give the argument for (\ref{LE1}) and (\ref{LE2}): Inequality (\ref{PSI}) on $\nabla_yG_{\varepsilon, D}(a, \cdot, y)$  yields for every $2 \leq p < +\infty$
\begin{align*}
\int_{|y-z| <R} \bigl(\int_{|x-z|> 4R}& |\nabla_yG_{\varepsilon, D}(a; x, y)|^p\bigr)^{\frac 2 p}\\
&\lesssim_{D} \int_{|y-z| <R}\int_{|x-z|> 4R} |\nabla\nabla G_{\varepsilon, D}(a; x, y)|^2 \stackrel{(\ref{Teo1A})}{\lesssim_{D}} 1,
\end{align*}
and thus by H\"older's inequality in $\{4R< |x-z|< 8R \}$ also 
\begin{align}\label{Ex.44d}
\int_{|y-z| <R}\int_{4R < |x-z| < 8R} |\nabla_yG_{\varepsilon, D}(a; x, y)|^2 \lesssim_{D} R^{2\frac{p-2}{p}}.
\end{align}
Since the exponent $p$ can be chosen arbitrarily large, we obtain (\ref{LE1}) for $\nabla_yG_{\varepsilon, D}$.\\
We now observe that 
\begin{align*}
\int_{|y-z|< R}\int_{|x-w|< R}|&\nabla_y G_{\varepsilon,D}(a; x,y)|^2 \lesssim_{D}R^{2\frac{p-2}{p}},
\end{align*}
for every $w$ and $z$ such that $\{5R < |z-w| < 7R\}$. Indeed, this is implied by (\ref{Ex.44d}) and the inclusion
\begin{align*}
 \{5R < |w-z| < 7R\} \cap &\{| x-w|<R\}\\
 &\subset  \{5R < |w-z| < 7R\} \cap \{4R<| x-z|< 8R\}.
\end{align*}
For a fixed $w \in \mathbb{R}^d$, we choose $n \lesssim 1$ balls of radius R which cover the annulus $\{ 5R < |y-w| < 7R\}$ and whose centres $\{ z_i\}_{i=1}^n$ are contained in  $\{5R \leq |z-w| \leq 7R \}$. Thus, from the previous inequality we infer
\begin{align}\label{Ex.27}
\int_{5R< |y-w| < 7R}&\int_{|x-w|< R}|\nabla_y G_{\varepsilon,D}(a; x,y)|^2\notag\\
& \leq \sum_{i=1}^n\int_{|y-z_i| < R}\int_{|x-w|< R}|\nabla_y G_{\varepsilon,D}(a; x,y)|^2 \lesssim_{D} R^{2\frac{p-2}{p}}.
\end{align}
By switching the labels x and y and using the symmetry property (\ref{Ex.28}), this may be rewritten as
\begin{align*}
\int_{5R< |x-w| < 7R}&\int_{|y-w|< R}|\nabla G_{\varepsilon,D}(a; x,y)|^2 \lesssim_{D} R^{2\frac{p-2}{p}},
\end{align*}
i.e. inequality (\ref{LE1}) thanks to the arbitrariness of $ 2\leq p < +\infty$.\\
It thus remains to prove (\ref{LE2}): By Poincar\'e's inequality in the x-variable we have
\begin{align}\label{Ex.21}
 \int_{|y|< R}&\int_{4R < |x| < 8R}| G_{\varepsilon,D}(a; x,y)|^2 \notag\\
 &\hspace{1cm}\lesssim R^{2}\int_{|y|< R }\int_{4R< |x|< 8R }\hspace{-0.2cm}|\nabla G_{\varepsilon,D}(a; x,y)|^2\notag \\
 & \hspace{3cm}+ \fint_{|y|< R }\bigl| \int_{ 4R< |x|< 8R }\hspace{-0.2cm} G_{\varepsilon,D}(a; x,y) \bigr|^2.
\end{align}
Therefore, thanks to (\ref{LE1}), we conclude (\ref{LE2}) once that we show that the second term on the r.h.s. of (\ref{Ex.21}) satisfies for $\delta >0$
\begin{align}\label{LE3}
 \fint_{|y|< R }\bigl| \int_{ 4R< |x|< 8R }\hspace{-0.2cm} G_{\varepsilon,D}(a; x,y) \bigr|^2\lesssim_{D} R^{4- \delta}.
\end{align}
To do so, let us fix p and consider any function $ g \in L^{2}( \mathbb{R}^d)$ with supp$(g) \subset \{ |x| < R\}\cap D$. 
Let u be the solution of
\begin{align*}
\begin{cases}
 -\nabla\cdot a\nabla u + \varepsilon\mathcal{L}_n u= g \mbox{\hspace{0.5cm} in $D$}\\
 \ \ u= 0 , \ \ \ \mbox{ \hspace{2.45cm} in $\partial D$}.
\end{cases}
\end{align*}
The energy estimate
\begin{align*}
\int |\nabla u|^2 &\lesssim \biggl(\int_{|x|< R} |g|^{2}\biggr)^{\oh}\biggl(\int_{|x|< R} |u|^{2}\biggr)^{\oh},
\end{align*}
together with H\"older's inequality for $ 2 \leq p < +\infty$
\begin{align*}
\biggl( \int_{|x|< R} |u|^2 \biggr)^\oh \leq R^{1 -\frac 1 p}\biggl( \int |u|^p \biggr)^{\frac 1 p}
\end{align*}
and the standard Poincar\'e-Sobolev's inequality
\begin{align*}
\biggl( \int |u|^p \biggr)^{\frac 1 p} & {\lesssim_{D}} \biggl( \int |\nabla u|^{2} \biggr)^{\frac 1 2},
\end{align*}
yields
\begin{align*}
\biggl(\int |\nabla u|^2 \biggr)^\oh \lesssim_{D} R^{1 -\frac 1 p} \biggl( \int_{|x|< R} |g|^{2} \biggr)^{\oh}.
\end{align*}
Applying again H\"older's inequality in $\{|x| < R \}$ and the Poincar\'e-Sobolev's inequality stated above, the previous estimate also implies that
\begin{align}\label{Ex.20b}
\bigl( \int_{4R < |x| < 8R} |u|^2 \bigr)^{\frac 1 2} &\lesssim_{D} R^{2-\frac{4}{p}} \bigl( \int |g|^{2} \bigr)^{\frac 1 2}.
\end{align}
By the representation formula \eqref{representation.f}, estimate (\ref{Ex.20b}) can be rewritten as
\begin{align*}
\bigl( \int_{4R < |x| < 8R } |\int_{|y| < R} G_{\varepsilon, D}(a; x, y) g(y) |^2 \bigr)^{\frac 1 2}  \lesssim_{D} R^{2-\frac{4}{p}} \bigl(\int |g|^2 dy \bigr)^{\frac{1}{2}},
\end{align*}
so that Jensen's inequality implies
\begin{align*}
| \int_{|y| < R}\biggl( \int_{4R < |x| < 8R}  G_{\varepsilon, D}(a; x, y) \biggr) g(y) | \lesssim_{D} R^{3-\frac{4}{p}} \bigl(\int |g|^2 \bigr)^{\frac{1}{2}}.
\end{align*}
The arbitrariness of $g$ allows to argue by duality that
\begin{align}\label{Ex.20c}
 \fint_{|y| < R} |\int_{4R < |x| < 8R}  G_{\varepsilon, D}(a; x,y) |^2  \lesssim_{D} R^{4- \frac 8 p},
\end{align}
i.e. the desired bound  (\ref{LE3}) thanks to the arbitrariness of $2 \leq p < +\infty$.

\medskip

At last, we prove that the bound (\ref{Teo1A}) implies (\ref{Teo1A2}): Modulo a change of coordinates, we may assume $D \subset I \times \mathbb{R}$, with $I$ a bounded interval. Moreover, since by construction for almost every $y \notin D$, $G_{\varepsilon, D}(a; \cdot, y)=0$ almost surely in $\mathbb{R}^2$, we reduce ourselves to those
$z\in \mathbb{R}^2$ and $R>0$ such that $\{|y-z| < R\}\cap D \neq \emptyset$ and, without loss of generality we fix $z=0$. Therefore, for every $R \gtrsim_D 1$ the rectangle $I \times (-2R, 2R)$ is such that
$$
\{|y| < R\}\cap D \subset I \times (-2R, 2R) \subset \{|y| < 4R\}\cap D.
$$
and thus
\begin{align*}
\int_{|y| < R}\int_{|x|> 8R} |\nabla G_{\varepsilon, D}(a; x, y)|^2 \leq \int_{I \times (-2R, 2R)}\int_{|x|> 16R} |\nabla G_{\varepsilon, D}(a; x, y)|^2.
\end{align*}
Since by (\ref{Ex.28}) and (\ref{PDay}) the application $\nabla G_{\varepsilon, D}(a ; x, y)$ vanishes outside D we may apply Poincar\'e's inequality in $I \times (-2R, 2R)$ and get from the previous inequality
\begin{align*}
\int_{|y| < R}\int_{|x|> 8R} |\nabla G_{\varepsilon, D}(a; x, y)|^2 \lesssim_D R^2 \int_{I \times (-2R, 2R)}\int_{|x|> 8R} |\nabla \nabla G_{\varepsilon, D}(a; x, y)|^2
\end{align*}
and thus also
\begin{align*}
\int_{|y| <R}\int_{|x|> 8R} |\nabla &G_{\varepsilon, D}(a; x, y)|^2 \\
&\lesssim_D R^2\int_{|y|  < 4R}\int_{|x|> 8R} |\nabla \nabla G_{\varepsilon, D}(a; x, y)|^2 \stackrel{(\ref{Teo1A})}{\lesssim_D} R^2.
\end{align*}

\bigskip

{{\sc Step} 4: Existence of $G_D(a, \cdot ,\cdot)$.} In this final step we do not distinguish between the cases $d>2$ and $d=2$.
The uniform bounds (in $\varepsilon$)  (\ref{Teo1A})-(\ref{Teo1B}) and (\ref{Teo1C})-(\ref{Teo1D}) for the family $\{G_{\varepsilon, D}(a ; \cdot, \cdot) \}_{\varepsilon \downarrow 0}$ allow us to argue by weak-compactness that, modulo a subsequence, for $1 \leq q < \frac{d}{d-1}$
\begin{align}
G_{\varepsilon,D}( a; \cdot, \cdot) &\rightharpoonup G_{D}(a; \cdot, \cdot)  \mbox{\hspace{1.4cm} in $ \ W^{1,q}_{\mbox{\tiny{loc}}}(\mathbb{R}^{d} \times \mathbb{R}^{d})$,  \,}\label{Ex.36}\\
\nabla_{x,y}G_{\varepsilon,D}(a; \cdot, \cdot)& \rightharpoonup \nabla_{x,y}G_{D}(a; \cdot, \cdot)\mbox{\ \ \ \ in $ \ L^{2}_{\mbox{\tiny{loc}}}(\mathbb{R}^{d}\times \mathbb{R}^{d} \setminus \{ {x}=y \})$,}\label{Ex.36a}\\
\nabla\nabla G_{\varepsilon, D}(a; \cdot, \cdot)& \rightharpoonup \nabla\nabla G_{D}(a; \cdot, \cdot)  \mbox{\hspace{0.7cm} in $ \ L^{2}_{\mbox{\tiny{loc}}}(\mathbb{R}^{d}\times \mathbb{R}^{d} \setminus \{{x}=y\})$.}\label{Ex.36b}
\end{align}
Since $G_{D}(a; \cdot, \cdot) \in \ W^{1,q}_{\mbox{\tiny{loc}}}(\mathbb{R}^{d} \times \mathbb{R}^{d})$ with $G_{D}(a; \cdot, \cdot) \equiv 0$ outside $D \times D$,
it follows respectively that for almost every $y\in \mathbb{R}^{d}$
\begin{align*}
G_{D}(a, \cdot,y)&\in W^{1,q}_{loc}(\mathbb{R}^{d}),\\
G_{D}(a, \cdot,y)&= 0 \text{\ \ almost everywhere outside $D$}.
\end{align*}
We now show that for almost every $y\in \mathbb{R}^{d}$, the application $G_{D}(a, \cdot, y)$ solves (\ref{TG}): By construction of $G_{\varepsilon, D}$, it holds indeed that for almost every $y\in \mathbb{R}^{d}$ 
and every $\zeta \in C^{\infty}_0(D)$
\begin{align*}
\int \nabla\zeta(x)\cdot a(x)& \nabla G_{\varepsilon, D}(a; x, y) \\
 -&\varepsilon \int \mathcal{L}_n\zeta(x) G_{\varepsilon, D}(a; x, y)= \zeta(y).
\end{align*}
For every $\rho \in C^\infty_0(\mathbb{R}^{d})$, the previous identity yields 
\begin{align*}
\int \rho(y)\int \nabla\zeta(x)\cdot a(x)& \nabla G_{\varepsilon, D}(a; x, y) \\
 -&\varepsilon\int \rho(y) \int \mathcal{L}_n\zeta(x) G_{\varepsilon, D}(a; x, y)=\int \rho(y)\zeta(y),
\end{align*}
so that for $\varepsilon \rightarrow 0$, by weak convergence (\ref{Ex.36}), we get 
\begin{align}\label{Ex.42}
\int \rho(y)\int \nabla\zeta(x)\cdot a(x)& \nabla G_{D}(a; x, y) = \int \rho(y)\zeta(y).
\end{align}
The arbitrariness of the test function $\rho \in C^{\infty}_0(\mathbb{R}^{d})$ implies that for almost every $y\in \mathbb{R}^{d}$
\begin{align}\label{Ex.41}
\int \nabla\zeta(x)\cdot a(x)& \nabla G_{D}(a; x, y)= \zeta(y).
\end{align}
We now appeal to the separability of $C^\infty_0(D)$ with respect to the $C^1$ topology to conclude that for almost every $y\in \mathbb{R}^{d}$ and for every $\zeta\in C^\infty_0(D)$
\begin{align}\label{Ex.25}
\int \nabla\zeta(x)\cdot a(x) \nabla G_{D}(a; x, y) = \zeta(y),
\end{align}
i.e. for almost every $y\in \mathbb{R}^{d}$ a solution $G_{D}(a; \cdot, y)$ of (\ref{TG}) exists.
Reasoning in the same way, from (\ref{PDay}) and weak convergence (\ref{Ex.36b}) we also obtain that for every $R >0$, $ z\in \mathbb{R}^{d}$ and almost every $y \in \{|y- z|> 2R\}$,
the function $\nabla_yG_{D}(a; \cdot, y)$ solves
\begin{align}\label{PDy}
\begin{cases}
 -\nabla\cdot a\nabla \nabla_{y} G_{D}(a; \cdot , y) = 0 \mbox{\ \ \hspace{0.8cm}  in $\{|x- z| < R \} \cap D$}\\
 \ \ \nabla_{y} G_{D}(a; \cdot , y) = 0 \mbox{ \hspace{2.2cm} in $\{|x- z|< R \}\cap \partial D$.}
\end{cases}
\end{align}
Furthermore, appealing to (\ref{Ex.36}), (\ref{Ex.36a}) and (\ref{Ex.36b}) and the lower semiconinuity of the bounds (\ref{Teo1B})-(\ref{Teo1A2}),  (\ref{Teo1A}) and (\ref{Teo1C})-(\ref{Teo1D}), 
we get that they hold also for $G_{D}(a; \cdot, \cdot)$; in particular, inequalities (\ref{Teo1B})-(\ref{Teo1A2}) imply that $G_{D}(a; \cdot, \cdot)$ satisfies bound (\ref{WB}) for any $\alpha \in (d-2, d)$ as well as bound 
(\ref{WB2}) for any $R>0$ if $d> 2$ and for any $R \gtrsim_D 1$ if $d=2$. Thus, $G_D(a; \cdot ,\cdot)$ is the Green function for the domain $D$. By uniqueness (cf. Lemma \ref{U}) and symmetry of the operator $-\nabla \cdot a\nabla$, cf. (\ref{Sym}), we also have that
for all $a\in \Omega$, $z\in \mathbb{R}^d$, $R> 0$ and almost every $x,y \in \mathbb{R}^d$ it holds
\begin{align}
G_{D}(a;x,y)&=G_{D}(a;y,x),\label{Ex.28a}\\
G_{D}(a(\cdot+z);x,y)&=G_{D+z}(a;x+z,y+z),\label{Ex.29a}\\ 
G_{D} (a; R{x}, R y) &= R^{2-d}G_{R^{-1}D}(a(R \, \cdot );  x, y). \label{Ex.24a}
\end{align}

\bigskip 

\begin{remark}
We observe that also for $G_D$ holds a representation formula for weak solutions of 
\begin{align}\label{Cp}
\begin{cases}
-\nabla\cdot a \nabla u = f \ \ \ \ \mbox{ in $D$}\\
\ \ u= 0 \ \ \ \ \mbox{\hspace{1.1cm} in $\partial D$,}
\end{cases}
\end{align}
with $f \in L^q(D)$, $q>d$ and compactly supported. This may easily follows by uniqueness of the solution $u$  (via Riesz's representation theorem) and the fact that the function
\begin{align*}
\hat u(x) = \int G_D(a; y, x) f(y) \, dy 
\end{align*}
is well defined and such that $\nabla \hat u \in L^2( D)$, thanks to the bounds \eqref{Teo1A2}, \eqref{Teo1C} and \eqref{Teo1D}. Note that $\hat u$ weakly solves \eqref{Cp}:
This may be shown as for \eqref{representation.f} first for smooth test functions and then extended by standard approximation.\\ 
We also have that for any $f \in L^2(D)$ and $g \in [L^2( D)]^d$ with compact support, the weak solution of
\begin{align}\label{Cp2}
\begin{cases}
-\nabla\cdot a \nabla u = f + \nabla \cdot g \ \ \ \ \mbox{ in $D$}\\
\ \ u= 0 \ \ \ \ \mbox{\hspace{2.2cm} in $\partial D$,}
\end{cases}
\end{align}
admits the representation
\begin{align*}
u(x) = \int G_D(a; y, x) f(y) \, dy - \int \nabla G_D(a; y, x) \cdot g(y) \, dy,
\end{align*}
whenever $x$ is outside the support of both $g$ and $f$. We first consider the family $\{ u_\varepsilon \}_{\varepsilon > 0}$ of solutions to the approximate problems
\eqref{approx.pb} with the same r.h.s. : By standard weak-compactness arguments, (up to a subsequence)  $\{ u_\varepsilon\}_{\varepsilon > 0}$  weakly converge in $W^{1,2}_{loc}(D)$ to the solution $u$ of \eqref{Cp2}.
We thus conclude the identity above by using \eqref{representation.f}, together with \eqref{Ex.36a}-\eqref{Ex.36b}, and the uniqueness of the (weak) limit.
\end{remark}

\section{Proof of Corollary \ref{Cor1} and \ref{Cor2}}\label{Corollaries}
{\sc Proof of Corollary \ref{Cor1}.}\\
Let $D\subset \mathbb{R}^d$ with $d \geq 2$ be as in the statement of Corollary \ref{Cor1}. Modulo a change of coordinates, we can assume that there exists a bounded interval $I \subset \mathbb{R}$ such that $D \subset I \times \mathbb{R}^{d-1}$. 
In addition, without loss of generality we may suppose that $|I|=1$: It will become clear along the proof that the estimates obtained depend on the size of $I$.
For $I \times \mathbb{R}^{d-1}$ as above, we write $\bar x= (x_1, x') \in I \times \mathbb{R}^{d-1}$.\\
The main ingredient for the argument of Corollary \ref{Cor1} is the following elliptic regularity result (\cite{O}, Lemma 2.2), adapted to elliptic systems with Dirichlet boundary conditions. We postpone its proof to the Appendix.
\begin{lemma}\label{exp}
Let $ D$ be as introduced above, and $a\in \Omega$ such that it satisfies (\ref{StE}). For $g\in L^2(D)^{d}$, let u solve (in the sense of Definition 1 with $\varepsilon =0$ and $R= +\infty$)
\begin{align*}
\begin{cases}
-\nabla\cdot a \nabla u = \nabla\cdot g \ \ \ \ \mbox{ in $D$}\\
\ \ u= 0 \ \ \ \ \mbox{\hspace{1.9cm} in $\partial D$.}
\end{cases}
\end{align*}
Then, there exists a constant $C_0$ depending on $d$, $\lambda$ (and the size of $I$) such that for any $x'_0 \in \mathbb{R}^{d-1}$, it holds
\begin{align}\label{Ex.32}
\int \exp{(\frac {|x'-x_{0}'|} { C_0 } )} |\nabla u|^2 \lesssim \int \exp{(\frac{ |x'-x_{0}'| }{C_0})} |g|^2.
\end{align}
\end{lemma}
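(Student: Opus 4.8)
The plan is to run a weighted energy (Caccioppoli) estimate in which the weight is an exponential in the unbounded variables $x'=(x_2,\dots,x_d)$, using the strong ellipticity~(\ref{StE}) to keep the weight pointwise inside the coercivity estimate, and using a one-dimensional Poincar\'e inequality in the single bounded coordinate $x_1$ to absorb the resulting lower-order terms. Fix $x_0'\in\mathbb R^{d-1}$, let $\gamma>0$ be a small parameter to be chosen, and for $R>0$ set $\omega_R(x):=\exp\!\bigl(\gamma\min(|x'-x_0'|,R)\bigr)$, a bounded, globally Lipschitz, nonnegative function that is \emph{independent of $x_1$} and satisfies $|\nabla\omega_R|\le\gamma\,\omega_R$ a.e. Since $u$ vanishes outside $D\subset I\times\mathbb R^{d-1}$ and $\nabla u\in L^2(\mathbb R^d)$, Poincar\'e's inequality in the $x_1$-direction gives $u\in L^2(\mathbb R^d)$, so $\omega_R u\in W^{1,2}(\mathbb R^d)$ and it vanishes outside $D$; after multiplying by a spatial cutoff $\chi_n$ (equal to $1$ on $\{|x|<n\}$, supported in $\{|x|<2n\}$, with $|\nabla\chi_n|\lesssim 1/n$) this is an admissible test function, and the $\nabla\chi_n$-commutators vanish as $n\to\infty$ because $u,\nabla u\in L^2$. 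So we may legitimately test the weak formulation of $-\nabla\cdot a\nabla u=\nabla\cdot g$ with $\omega_R u$.

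Testing and using $\nabla(\omega_R u)=\omega_R\nabla u+u\,\nabla\omega_R$ gives
\begin{align*}
\int\omega_R\,\nabla u\cdot a\nabla u+\int u\,\nabla\omega_R\cdot a\nabla u=-\int\omega_R\,\nabla u\cdot g-\int u\,\nabla\omega_R\cdot g .
\end{align*}
By~(\ref{StE}) the first term is $\ge\lambda\int\omega_R|\nabla u|^2$; this is precisely the place where strong ellipticity, rather than the integrated condition~(\ref{KC}), is needed, since here the weight multiplies the quadratic form pointwise. Writing $A:=\bigl(\int\omega_R|\nabla u|^2\bigr)^{1/2}$, $B:=\bigl(\int\omega_R|g|^2\bigr)^{1/2}$ and $P:=\bigl(\int\omega_R|u|^2\bigr)^{1/2}$ (all finite thanks to the truncation), and using $|\nabla\omega_R|\le\gamma\omega_R$, the bound $|a\nabla u|\le|\nabla u|$ from~(\ref{bdd}), and Cauchy--Schwarz with respect to the measure $\omega_R\,dx$, the three remaining terms are bounded by $\gamma PA+AB+\gamma PB$, so that $\lambda A^2\le\gamma PA+AB+\gamma PB$.

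The key point is now the \emph{weighted} Poincar\'e inequality $P^2=\int\omega_R|u|^2\lesssim\int\omega_R|\partial_1 u|^2\le A^2$: it holds because $\omega_R$ is constant in the bounded direction $x_1$, so integrating the ordinary one-dimensional Poincar\'e inequality for $u(\cdot,x')$ (a $W^{1,2}(\mathbb R)$ function supported in an interval of length $|I|=1$) against $\omega_R\,dx'$ costs only the universal, $x_0'$- and weight-independent one-dimensional constant. Hence $P\lesssim A$, and the tested identity becomes $\lambda A^2\lesssim\gamma A^2+AB$, i.e.\ $\lambda A\lesssim\gamma A+B$ once $A<\infty$. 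Choosing $\gamma$ small --- concretely $\gamma\sim\lambda$, up to the universal Poincar\'e constant and the size of $I$ --- lets us absorb $\gamma A$ into the left-hand side, giving $\int\omega_R|\nabla u|^2\lesssim\lambda^{-2}\int\omega_R|g|^2\le\lambda^{-2}\int\exp(\gamma|x'-x_0'|)\,|g|^2$. Letting $R\to\infty$ and invoking monotone convergence on the left yields~(\ref{Ex.32}) with $C_0:=1/\gamma$, a constant depending only on $d$, $\lambda$ and $|I|$ and in particular independent of $x_0'$.

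The main difficulty I expect is the structural observation behind the weighted Poincar\'e inequality --- that the weight must be taken constant in a bounded direction so that the absorption closes, which is exactly what forces the hypothesis that $D$ be bounded in at least one direction --- together with the (routine but needed) justification that $\omega_R u$, truncated by $\chi_n$, is an admissible test function and that the cutoff commutators disappear in the limit. Once these are in place, the choice $\gamma\sim\lambda$ and the final absorption are completely forced, and the rest is bookkeeping.
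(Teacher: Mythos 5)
Your proof is correct and follows essentially the same strategy as the paper: an exponentially weighted energy estimate in which (\ref{StE}) is used pointwise against the weight, the weight is taken constant in the bounded direction $x_1$ so that the one-dimensional Poincar\'e inequality (integrated against the weight) controls the zero-order term, and the absorption closes once $C_0$ (your $1/\gamma$) is chosen large depending only on $\lambda$ and $|I|$. The only difference is in the regularization: the paper works with one-sided coordinate weights $\exp(\pm x_i'/C_0)$, justifies them by solving approximate problems on exhausting domains $D_M$ with truncated data $g_M$ and passing to the limit by weak lower semicontinuity, and then combines the directions by convexity of the exponential, whereas you truncate the weight itself via $\min(|x'-x_0'|,R)$ together with a spatial cutoff and conclude by monotone convergence --- a legitimate, essentially equivalent implementation of the same estimate.
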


\bigskip

We start by claiming that the previous lemma, together with an application of Lemma \ref{BGO2}, yields (\ref{Teo1F}). More precisely, we have for every $z\in \mathbb{R}^d$ and $R > 0$ that
\begin{align}
\int_{| y- z|> 8R}&\int_{\{|x-z |< R\}}|\nabla\nabla G_{D}( a; x, y)|^2 \lesssim \exp{(-\frac{2R}{C_0})},\label{Ex.31b}
\end{align}
with $C_0$ as in Lemma \ref{exp}.
Indeed, for a vector field $g\in L^2(D)^{d}$ with supp$(g) \subset \{|{x}- z| < 2R \}\cap D$, we apply Lemma \ref{exp} with $x_0' =z' $ to the solution of
\begin{align*}
\begin{cases}
 -\nabla\cdot a\nabla u =\nabla\cdot g \mbox{\ \ \  in \  $\ D$}\\
 \ \ u= 0  \hspace{2.35cm} \mbox{in \  $\partial D$}
\end{cases}
\end{align*}
and obtain, by using the formulas of Remark 2 in (\ref{Ex.32}), that
\begin{align}\label{Ex.33a}
\int_{|y- z|> 4R} &\exp{(\frac {|y'-z'|} {C_0})}\notag\\
\times | &\int_{|x- z|< 2R} \nabla\nabla G_{D}( a; x,y)\cdot g(x) |^2 \lesssim \exp{\bigl(\frac{2R}{C_0}\bigr)}\int |g|^2.
\end{align}
We now apply to (\ref{Ex.33a}) Lemma \ref{BGO2}, this time in the case $\varepsilon = 0$, with functionals given by $\int  g$, measure $\mu(dy)= \nolinebreak\exp{(\frac {|y'-z'|} {C_0})} dy|_{\{| y- z|> 4R\}}$ and to the family
of functions $\{\nabla G_{D}( a; y, \cdot)\}_{\{| y- z|> 4R\}}$, a-harmonic in $\{ | x- z| < 2R \} \cap D$ by (\ref{Ex.28a}) and (\ref{PDy}). 
From (\ref{Ex.33a}) we thus infer
\begin{align}\label{Ex.33b}
\int_{|y- z|> 4R }\int_{|x- z|< R }\hspace{-0.3cm}\exp{(\frac {|y'-z'|} {C_0} )}| \nabla\nabla G_{D}(a; x,y)|^2 \lesssim \exp{\bigl(\frac{2R}{C_0}\bigr)},
\end{align}
which implies inequality (\ref{Ex.31b}) since for $R \gtrsim_D 1$ it holds the inclusion  
$$
\{| y- z|> 8R\}\cap D \subset \mathbb{R} \times \{ |y' - z'| > 4R \} \cap D.
$$

\medskip

To obtain also (\ref{Teo1E}) we argue similarly to Step 3. of Theorem \ref{Teo1}; we first tackle the bound for the gradient of $G_D$: Without loss of generality we may reduce ourselves to consider the case  $\{|x-z| <R\}\cap D \neq \emptyset$ and fix $z=0$.
For every $R \gtrsim_D 1$ the rectangle $I \times (-2R, 2R)^{d-1}$ is such that
$$
\{|y| < R\}\cap D \subset I \times (-2R, 2R)^{d-1} \subset \{|y| < 4R\}\cap D.
$$
and thus
\begin{align*}
\int_{|y| < R}\int_{|x|> 8R} |\nabla G_{D}(a; x, y)|^2 \leq \int_{I \times (-2R, 2R)^{d-1}}\int_{|x|> 8R} |\nabla G_{D}(a; x, y)|^2.
\end{align*}
Since by (\ref{Ex.28a}) and (\ref{PDa}) the application $\nabla G_{\varepsilon, D}(a; x, y)$ vanishes outside D, we may apply Poincar\'e's inequality in $I \times (-2R, 2R)^{d-1}$ and get from the previous inequality that
\begin{align*}
\int_{|y| < R}\int_{|x|> 8R} |\nabla G_{D}(a; x, y)|^2 \lesssim_D R^2 \int_{I \times (-2R, 2R)^{d-1}}\int_{|x|> 8R} |\nabla \nabla G_{D}(a; x, y)|^2
\end{align*}
and thus that
\begin{align}\label{Ex.33}
&\int_{|y| <R}\int_{|x|> 8R} |\nabla G_{\varepsilon, D}(a; x, y)|^2 \notag\\
&\hspace{0.4cm}\lesssim_D R^2\int_{|y|  < 4R}\int_{|x|> 8R} |\nabla \nabla G_{\varepsilon, D}(a; x, y)|^2 \stackrel{(\ref{Ex.31b})}{\lesssim_D} R^2\exp{\bigl(-\frac{2R}{C_0}\bigr)}.
\end{align}
This trivially yields (\ref{Teo1E}) for $\nabla G_D$.\\
The bound (\ref{Teo1E}) for $G_D$ follows from (\ref{Ex.33}) by an application of Poincar\'e's inequality, this time in the domain $\{|x-z|> R\}\cap D$.
\footnote{The argument is analogous to the one for (\ref{PSI}) of Step 3. of Theorem \ref{Teo1}.}

\bigskip

{\sc  Proof of Corollary \ref{Cor2}.} Throughout this proof we assume $d > 2$ and recall that, for $a \in \Omega$,we adopt the notation $G(a, \cdot ,\cdot)$ for the Green function for the whole space $\mathbb{R}^d$.\\
{{\sc Step} 1: $\langle \cdot \rangle$-almost sure solutions of (\ref{TG}) and (\ref{TGy}).} We show that with the additional structure $\langle \cdot \rangle$ on $\Omega$, it holds that
\begin{align}
\text{ $\forall$ \ a.e. \ $y\in \mathbb{R}^d$ and $\langle \cdot \rangle$- a.e. \ $a \in \Omega$, $G(a; \cdot, y)$ solves (\ref{TG}),}&\label{statCor}\\
\text{ $\forall$ \ a.e. \ $y\in \mathbb{R}^d$ and $\langle \cdot \rangle$- a.e. \ $a \in \Omega$, $\nabla_yG(a; \cdot, y)$ solves (\ref{TGy}),}&\label{statCorb}
\end{align}
and
for every $R>0$, $z\in\mathbb{R}^d$, almost every $x,y \in \mathbb{R}^d$ and $\langle \cdot \rangle$-almost every $a\in\Omega$,
\begin{align}
G(a; x, y) &= G(a; y, x), \label{Ex.28b}\\
G(a, x+z, y+z) &= G(a(\cdot + z), x, y), \label{Ex.29b}\\
G(a; R x,R  y) &=R^{2-d}G(a(R \, \cdot );  x,  y) .\label{Ex.24b}
\end{align}
In other words, we prove that the ensemble on $\Omega$, chosen to be such that the $L^1(\Omega)$ space is separable (cf. Section \ref{Not} ), allows to exchange in (\ref{TG}) and (\ref{TGy}) as well as (\ref{Ex.28a}), (\ref{Ex.29a}) and (\ref{Ex.24a}) the order of
the quantors $a$ and $x,y$. This will be useful in the next steps, when we treat $G(\cdot ; \cdot, y)$ and $\nabla_yG(\cdot ; \cdot, y)$ as almost sure solutions of respectively (\ref{TG}) and (\ref{TGy}).

\medskip

From Theorem \ref{Teo1}, we have that for every $\phi\in L^1(\Omega)$ and $\zeta,\rho \in C^\infty_0(\mathbb{R}^d)$ it holds
\begin{align*}
\langle \phi(a)\int \rho(y)\int \nabla\zeta(x)\cdot a(x)& \nabla G(a; x, y)\rangle = \langle \phi(a) \int \rho(y)\zeta(y) \rangle,
\end{align*}
or equivalently by Fubini's theorem,
\begin{align*}
\int \rho(y)\langle \phi(a)\int \nabla\zeta(x)\cdot a(x)& \nabla G(a; x, y)\rangle =  \int \rho(y)\langle \phi(a)\zeta(y) \rangle.
\end{align*}
As the test function $\rho \in C^{\infty}_0(\mathbb{R}^d)$ is arbitrary, we infer that for almost every $y\in \mathbb{R}^d$
\begin{align}\label{Cor4}
\langle \phi(a) \int \nabla\zeta(x)\cdot a(x)& \nabla G(a; x, y)\rangle =  \langle \phi(a) \zeta(y) \rangle.
\end{align}
Since the space $L^1(\Omega)$ is separable, it also follows that for almost every $y\in \mathbb{R}^d$ and $\langle \cdot \rangle$-almost every $a\in \Omega$
\begin{align*}
\int \nabla\zeta(x)\cdot a(x)& \nabla G(a; x, y)=   \zeta(y).
\end{align*}
We now appeal to the separability of $C^\infty_0(\mathbb{R}^d)$ with respect to the $C^1$ topology, to conclude that for almost every $y\in \mathbb{R}^d$, $\langle \cdot \rangle$-almost every $a\in \Omega$ and for every $\zeta\in C^\infty_0(\mathbb{R}^d)$,
\begin{align*}
\int \nabla\zeta(x)\cdot a(x) \nabla G(a; x, y) = \zeta(y),
\end{align*}
i.e. claim (\ref{statCor}). With an analogous argument, from (\ref{PDy}) we also prove (\ref{statCorb}).

\medskip

In a similar way we obtain identities (\ref{Ex.28b}), (\ref{Ex.29b}) and (\ref{Ex.24b}): We show the argument only for (\ref{Ex.24b}) since the arguments for the other two are analogous. Identity (\ref{Ex.24a}) with a fixed $R>0$ yields for any triple $\phi\in L^1(\Omega)$, $\zeta,\rho \in C^\infty_0(\mathbb{R}^d)$
\begin{align*}
\langle \phi(a) \int \int \zeta(x)\rho(y) G(a, R x, R  y) \rangle = R^{2-d}\langle \phi(a) \int \int \zeta( x)\rho( y) G(\hat a, x, y) \rangle ,
\end{align*}
with $\hat \ : \Omega \rightarrow \Omega$ such that $\hat a (\cdot):= a(R \,\cdot)$. 
By Fubini's theorem we may exchange the order of integration in the previous identity and obtain that
\begin{align*}
\int \int \zeta(x)\rho(y)\langle \phi(a)  G(a, R x, R y) \rangle = R^{2-d}\int \int \zeta( x)\rho(y) \langle \phi(a) G(\hat a,x,y) \rangle.
\end{align*}
Therefore, separability of $L^1(\Omega)$ yields that for almost every $x, y \in \mathbb{R}^d$ and $\langle \cdot \rangle$-almost every $a\in \Omega$, identity (\ref{Ex.24b}) holds.

\bigskip

{{\sc Step }2: Spacially averaged annealed bounds.} We argue that for almost every $y\in \mathbb{R}^d$ and $R>0$
\begin{align}
\langle\int_{R<|x-y|<2R}|G(a;&x,y)|^2\rangle\lesssim R^{4-d},\label{O.5}\\
\langle\int_{|x-y|>R}|\nabla G(a&; x,y)|^2 + |\nabla_y G(a;x,y)|^{2}\rangle \lesssim R^{2-d},\label{O.5bis}\\
\langle\int_{|x-y|>R}|\nabla\nabla G(&a;x,y)|^2\rangle\lesssim R^{-d}.\label{O.5tris}
\end{align}
We claim that it is sufficient to prove (\ref{O.5}),(\ref{O.5bis}) and (\ref{O.5tris}) for $R =1$: Let us assume for instance that (\ref{O.5}) holds for a $R = 1$, namely that
\begin{align}\label{O.5b}
\langle\int_{1<|x-y|<2}|G(&a;x,y)|^2\rangle\lesssim 1.
\end{align}
Since for almost every $x,y \in \mathbb{R}^d$, $\langle \cdot \rangle$-almost every $a\in \Omega$ and every countable set of radii $\mathcal{R}$ identity $(\ref{Ex.24b})$ holds, we may infer from (\ref{O.5b}) that for almost every $y\in \mathbb{R}^d$, bound (\ref{O.5}) is true for every $R \in \mathcal{R}$. We now show that with an appropriate choice of $\mathcal{R}$, we extend (\ref{O.5}) to any $R>0$: Picking 
\begin{align*}
\mathcal{R}:= \{ 2^{-n}, \ \ n\in \mathbb{N} \} \cup \mathbb{N},
\end{align*}
for every $R>0$ there exist $R_1, R_2 \in \mathcal{R}$ such that $R_1 \leq R \leq R_2$ with
$\frac {R} {R_1}, \frac{R_2}{R} \leq 2 $. Thus
\begin{align*}
\langle \int_{R< |x-y|< 2R}& |G(a; x, y)|^2 \rangle \\
&\leq  \langle \int_{R_1< |x-y|< 2R_1} |G(a; x, y)|^2 \rangle + \langle \int_{R_2< |x-y|< 2R_2} |G(a; x, y)|^2 \rangle\\
&\lesssim R_1^{4-d} + R_2^{4-d} \lesssim R^{4-d}.
\end{align*}
The same reasoning holds for (\ref{O.5bis}) and (\ref{O.5tris}). Moreover, since the previous argument may be adapted to any fixed $R \simeq 1$, for convenience in the next estimates, we prove (\ref{O.5}),(\ref{O.5bis}) and (\ref{O.5tris}) with $R=3$.

\medskip

We start with inequality (\ref{O.5tris}): We claim that it is enough to prove that for almost every $y\in\mathbb{R}^d$ and $\delta <<1$,
\begin{align}
\fint_{|y'-y|< \delta} \langle \int_{|x-y'|> 3} |\nabla\nabla G(a; x, y')|^2 \rangle \lesssim 1,\label{Cor13a}
\end{align}
Indeed, using (\ref{Teo1A}) we may send $\delta \rightarrow 0$ and conclude by Lebesgue Differentiation Theorem.\\

\medskip

We thus prove (\ref{Cor13a}): We take the average $\langle \cdot \rangle$ into inequality (\ref{Teo1A}) with $z=y$, $R= 2$ and, after integrating in the  $x$ and $y'$-variables, we obtain
\begin{align}
\langle \int_{|y'-y|< 1}\int_{|x-y|> 2} |\nabla\nabla G( a ; x, y')|^2 \rangle \lesssim 1
\end{align}
and also
\begin{align}
\langle \int_{|y'-y|< 1}\int_{|x-y'|> 3} |\nabla\nabla G( a ; x, y')|^2 \rangle \lesssim 1.
\end{align}
We now consider  $n \sim \delta^{-d}$ disjoint balls of radius $\delta << 1$ centred in $\{w_i\}_{i=1}^{n}$ points and contained in the unitary ball centred at the origin:
The previous inequality yields
\begin{align}\label{Cor14}
\sum_{i=1}^n \langle \int_{|y'-w_i-y|< \delta} \int_{|x-y'|> 3} & |\nabla\nabla G( a ; x, y')|^2 \rangle\notag\\
&\leq \langle \int_{|y'-y|< 1}\int_{|x-y'|> 3} |\nabla\nabla G( a ; x, y')|^2 \rangle \lesssim 1.
\end{align}
Moreover, thanks to (\ref{Ex.29b}) and stationarity, we rewrite the l.h.s. of the previous inequality as
\begin{align*}
\sum_{i=1}^n \langle \int_{|y'-w_i-y|< \delta}& \int_{|x-y'|> 3} |\nabla\nabla G( a ; x, y')|^2 \rangle\\
= &\sum_{i=1}^n \langle \int_{|y'-w_i-y|< \delta} \int_{|x-y'|> 3} |\nabla\nabla G( a ; x-w_i, y'-w_i)|^2 \rangle, 
\end{align*}
and, by the change of coordinates $x= x-w_i$ and $y'= y'- w_i$, as
\begin{align*}
\sum_{i=1}^n \langle \int_{|y'-w_i-y|< \delta}& \int_{|x-y'|> 3} |\nabla\nabla G( a ; x, y')|^2 \rangle\\
= & n \langle \int_{|y'-y|< \delta} \int_{|x-y'|> 3} |\nabla\nabla G( a ; x, y')|^2 \rangle\\
\simeq &\delta^{-d} \langle \int_{|y'-y|< \delta} \int_{|x-y'|> 3} |\nabla\nabla G( a ; x, y')|^2 \rangle .
\end{align*}
Inserting this into the l.h.s. of (\ref{Cor14}) allows to conclude (\ref{Cor13a}) and thus establish (\ref{O.5tris}).\\

\medskip

The bound (\ref{O.5bis}) for $\nabla G$ follows analogously from inequality (\ref{Teo1A2}). To show (\ref{O.5bis}) also for $\nabla_y G$ we use Sobolev's inequality in $\{|z-y|> 2 \}$ 
\begin{align*}
\biggl( \int_{|x-y|> 2} |\nabla_yG(a;x,y)|^{\frac{2d}{d-2}} \biggr)^{\frac{d-2}{2d}} \lesssim \biggl( \int_{|x-y|> 2} |\nabla\nabla G(a;x,y)|^{2} \biggr)^{\frac{1}{2}},
\end{align*}
together with H\"older's inequality
\begin{align*}
\int_{2<|x-y|<4} |\nabla_yG(a;x,y)|^{2} \lesssim \biggl( \int_{|x-y|>2} |\nabla_y G(a;x,y)|^{\frac{2d}{d-2}} \biggr)^{\frac{d-2}{d}},
\end{align*}
and get
\begin{align}\label{C17}
\langle \int_{2<|x-y|<4} &|\nabla_y G(a;x,y)|^{2} \rangle \lesssim \langle \int_{|x-y|>2} |\nabla\nabla G(a;x,y)|^2 \rangle \stackrel{(\ref{O.5tris})}{\lesssim} 1.
\end{align}
Since as we argue above we may assume that (\ref{Ex.24b}) holds for almost every $x, y \in \mathbb{R}^d$, $\langle \cdot \rangle$-almost every $a\in \Omega$ and on a countable set of radii, we infer that from the above inequality we have also for every $n \in \mathbb{N}$ that
\begin{align*}
\langle \int_{2^n<|x-y|<2^{n+1}} |\nabla_yG(a;x,y)|^{2} \rangle\lesssim 2^{n(2-d)},
\end{align*}
so that summing over $n\in \mathbb{N}$ we conclude (\ref{O.5bis}) also for $\nabla_y G$. Inequality (\ref{O.5}) follows from (\ref{O.5bis}) for $\nabla G$, again by Sobolev's and H\"older's inequality.

\medskip

{ {\sc Step 3}: Spatially pointwise estimates.}
We now post-process (\ref{O.5}), (\ref{O.5bis}) and (\ref{O.5tris}) to obtain  (\ref{C2A}),(\ref{C2B}) and (\ref{C2C}). Reasoning as in Step 2, without loss of generality it suffices to prove (\ref{C2A}),(\ref{C2B}) and (\ref{C2C}) for almost every
$x,y \in \mathbb{R}^d$ with $10< |x-y| < 12$.
Let $w \in \mathbb{R}^d$ be fixed; We claim that if  $u(a;\cdot)$ is a-harmonic in $\{|y-w| \le 8 \}$ for $\langle \cdot \rangle$-almost every $a\in \Omega$, then for almost every $y \in \mathbb{R}^d$ with $|y-w|<1$,
it holds
\begin{align}
\langle |u(a;y)| \rangle+ \langle |\nabla &u(a;y)|\rangle \notag\\
&\lesssim \langle\int_{2<|z-y|< 6}|\nabla u(a; z)|^2\rangle^\frac{1}{2} + \langle\int_{2<|z-y|< 6}|u(a;z)|^2\rangle^\frac{1}{2}.\label{L.20}
\end{align}
Before proving (\ref{L.20}), we show how to conclude the argument for (\ref{C2A}),(\ref{C2B}) and (\ref{C2C}). We start with (\ref{C2A}): By symmetry (\ref{Ex.28b}) and (\ref{statCor}), for almost every $x \in \mathbb{R}^d$ and
$\langle \cdot \rangle$-almost every $a\in \Omega$ the application $u(a;y)=G(a;x,\cdot)$ is a-harmonic in $ |x-y| > 2$. Moreover, since we may select in this domain $N \lesssim 1$ balls of radius $8$, centred in $\{ w_i\}_{i=1}^N$ 
points such that their union covers the annulus $10< |x-y| < 12$, estimate (\ref{L.20}) yields that for almost every $y$ such that $\{10< |x-y| < 12\}$ 
\begin{align*}
\langle |G(a;x,y)&|\rangle\\
\lesssim & \langle\int_{2<|z-y|<6}|G(a;x,z)|^2\rangle^\frac{1}{2}+\langle\int_{2<|z-y|<6}|\nabla_zG(a;x,z)|^2\rangle^\frac{1}{2}\\
\stackrel{(\ref{Ex.28b})}{\lesssim}& \langle\int_{2 <|z-y|< 6}|G(a;x,z)|^2 \rangle^\frac{1}{2}+ \langle \int_{2 <|z-y|< 6}|\nabla G(a;z,x)|^2\rangle^\frac{1}{2}.
\end{align*}
In addition, by the inclusion 
$$
\{10 <|x-y| < 12\} \cap \{2<|z-y|< 6\} \subset \{10 <|x-y| < 12\} \cap \{ 4<|z-x|< 18\}
$$
we conclude from the previous inequality that
\begin{align*}
\langle |G(a;x,y)&|\rangle \stackrel{}{\lesssim}& \langle\int_{4 <|z-x|< 18}\hspace{-0.3cm}|G(a;x,z)|^2 \rangle^\frac{1}{2}+ \langle \int_{|z-x|> 4}\hspace{-0.3cm}|\nabla G(a;z,x)|^2\rangle^\frac{1}{2}\stackrel{(\ref{O.5})-(\ref{O.5bis})}{\lesssim 1},
\end{align*}
i.e. bound (\ref{C2A}).

\medskip

In order to have also (\ref{C2B})-(\ref{C2C}), we consider $u(a;y)={\nabla}G(a;x,y)$ which, thanks to symmetry (\ref{Ex.28b}) and (\ref{statCorb}), for almost every $x\in \mathbb{R}^d$ and $\langle \cdot \rangle$-almost every $a\in \Omega$
is a-harmonic in $\{|y-x| > 2\}$. Therefore, reasoning as for bound (\ref{C2A}), we may apply estimate (\ref{L.20}) to this choice of $u$ and get that for almost every $y$ such that $\{10< |x-y| < 12\}$
\begin{align*}
\langle&|\nabla G(a;x,y)| \rangle + \langle |\nabla\nabla G(a;x,y)| \rangle \\
&\lesssim \langle\int_{2<|z-y|<6}|\nabla G(a;x,z) |^2\rangle^\frac{1}{2}+\langle\int_{2<|z-y|<6}|\nabla\nabla G(a; x,z)|^2\rangle^\frac{1}{2}\\
&\stackrel{(\ref{Ex.28b})}{\lesssim} \langle\int_{2<|z-y|<6}\hspace{-0.3cm}|\nabla_x G(a;z,x) |^2\rangle^\frac{1}{2}+\langle\int_{2<|z-y|<6}\hspace{-0.3cm}|\nabla\nabla G(a; x,z)|^2\rangle^\frac{1}{2}\\
&{\lesssim} \langle\int_{|z-x|> 4}\hspace{-0.3cm}|\nabla_x G(a;z,x) |^2\rangle^\frac{1}{2}+\langle\int_{|z-x|> 4}\hspace{-0.3cm}|\nabla\nabla G(a; x,z)|^2\rangle^\frac{1}{2}\stackrel{(\ref{O.5bis})-(\ref{O.5tris})}{\lesssim} 1.
\end{align*}

\medskip

We now argue that (\ref{L.20}) is implied by the following deterministic result: Let $w \in \mathbb{R}^d$ and a family of applications $\{u(a; \cdot)\}_{a\in \Omega}$ a-harmonic in $\{|y-w| < 8\}$- Then for any fixed $a\in \Omega$ we have for almost every $\{|y-w|<1\}$
\begin{align}
|u(a;y)|\lesssim\biggl(\int_{2<|z-y|< 6}&|\nabla G(a;z,y)|^2+|G(a;z,y)|^2\biggr)^{\frac{1}{2}}\notag\\
&\times \biggl(\int_{2<|z-y|< 6}|u(a;z)|^2+|\nabla{u}(a;z)|^2\biggr)^{\frac{1}{2}}\label{W10}.
\end{align}
and
\begin{align}\label{W11}
|\nabla u(a;y)|\lesssim  \biggl(\int_{2<|z-y|<6}&|\nabla\nabla G(a;z,y)|^2+|\nabla_y G(a;z,y)|^2\biggr)^{\frac{1}{2}}\notag \\
 & \times\biggl(\int_{2< |z-y|<6}|u(a;z)|^2+|\nabla{u}(a;z)|^2\biggr)^{\frac{1}{2}}.
\end{align}
Indeed, arguing again by separability of $L^1(\Omega)$, we also infer that the previous bounds hold for almost every $y\in \mathbb{R}^d$ such that $|y-w| <1$ and for $\langle\cdot \rangle$-almost every $a\in \Omega$. Therefore, we may take  in (\ref{W10}) and (\ref{W11}) the expected value, use Cauchy-Schwarz's inequality in $\langle\cdot\rangle$ and estimate
\begin{align*}
\langle |u(a;y)| \rangle \lesssim & \biggl(\langle \int_{2<|z-y|<6}|G(a;y,z)|^2 + |\nabla_zG(a;y,z)|^2 \rangle \biggr)^{\frac{1}{2}}\\
&\hspace{1cm}\times \biggl(\langle \int_{2<|z-y|<6}|\nabla u(a;z)|^2 + |u(a;z)|^2 \rangle \biggr)^{\frac{1}{2}}\\
\stackrel{(\ref{Ex.28b})}{\lesssim }& \biggl(\langle \int_{2<|z-y|<6}|G(a;z,y)|^2 + |\nabla_zG(a;y,z)|^2 \rangle \biggr)^{\frac{1}{2}}\\
&\hspace{1cm}\times \biggl(\langle \int_{2<|z-y|<6}|\nabla u(a;z)|^2 + |u(a;z)|^2 \rangle \biggr)^{\frac{1}{2}},
\end{align*}
\begin{align*}
\langle |\nabla u(a;y)| \rangle \lesssim & \biggl(\langle \int_{2<|z-y|<6}|\nabla G(a;y,z)|^2 + |\nabla\nabla G(a;y,z)|^2 \rangle\biggr)^{\frac{1}{2}}\\
& \hspace{1cm}\times \biggl(\langle\int_{2<|z-y|<6}|u(a;z)|^2 + |\nabla u(a;z)|^2 \rangle\biggr)^\frac{1}{2}\\
\stackrel{(\ref{Ex.28b})}{\lesssim} &\biggl(\langle \int_{2<|z-y|<6}|\nabla_y G(a;z,y)|^2 + |\nabla\nabla G(a;y,z)|^2 \rangle\biggr)^{\frac{1}{2}}\\
& \hspace{1cm}\times \biggl(\langle\int_{2<|z-y|<6}|u(a;z)|^2 + |\nabla u(a;z)|^2 \rangle\biggr)^\frac{1}{2}.
\end{align*}
Inequality (\ref{L.20}) follows by (\ref{O.5}),(\ref{O.5bis}) and (\ref{O.5tris}).

\medskip

It thus only remains to tackle (\ref{W10}) and (\ref{W11}): Without loss of generality, let us fix $w=0$. For a cut-off function  $\eta$ of $\{|y|\le 3\}$ in $\{|y|\le 5\}$ we may define\footnote{ Since we are working in the systems' setting, to be notationally rigorous we should write $u\otimes \nabla\eta$ instead of $u\nabla\eta$.}
\begin{equation}\nonumber
v=\eta u,\quad f=-\nabla\eta\cdot a\nabla u,\quad g=-a(\nabla\eta u)
\end{equation}

and have, by our assumption that $u(a; \cdot)$ is a-harmonic in $\{|y| < 8 \}$, that 
\begin{align*}
-\nabla\cdot a\nabla v =-\nabla\cdot a( \nabla\eta u+\eta\nabla u)=\nabla\cdot g+f.
\end{align*}
Hence, the representation formula (see Remark 2) yields for almost every $y$ with $|y|<1$ that
\begin{align*}
u(a;y)=&\int\bigl(G(a;y,z)f(a;z) - \nabla_zG(a;y,z)\cdot g(a;z)\bigr)dz,\\
\nabla u(a;y)=&\int\bigl(\nabla G(a;y,z)f(a;z) - \nabla\nabla G(a;y,z)\cdot g(a;z)\bigr)dz.
\end{align*}
By definition of $\eta$, $v$, $f$, and $g$ together with conditions (\ref{KC})-(\ref{bdd}), H\"older's inequality implies that
\begin{align*}
|u(a;y)|\lesssim & \biggl(\int_{3<|z|<5}|G(a;y,z)|^2\biggr)^{\frac{1}{2}}\biggl(\int_{3<|z|<5}|\nabla u(a;z)|^2\biggr)^{\frac{1}{2}}\\
&+\biggl(\int_{3<|z|<5}|\nabla_zG(a;y,z)|^2 \biggr)^{\frac{1}{2}}\biggl(\int_{3<|z|< 5}|u(a;z)|^2\biggr)^{\frac{1}{2}},
\end{align*}
\begin{align*}
|\nabla u(a;y)|\lesssim & \biggl(\int_{3<|z|<5}|\nabla G(a;y,z)|^2\biggr)^{\frac{1}{2}}\biggl(\int_{3<|z|<5}|\nabla u(a;z)|^2\biggr)^\frac{1}{2}\\
&+\biggl(\int_{3<|z|<5}|\nabla\nabla G(a;y,z)|^2\biggr)^{\frac{1}{2}}\biggl(\int_{3<|z|<5}|u(a;z)|^2\biggr)^\frac{1}{2},
\end{align*}
and thus (\ref{W10}) and (\ref{W11}).

\bigskip

\section{Fourier Approach}\label{SF}
Here we summarise how the Fourier method developed in \cite{cn2} can be used to prove Corollary \ref{Cor2} provided the  system is uniformly elliptic, so we shall assume that both (\ref{bdd}) and (\ref{StE}) hold.  
The method is based on a representation of the Fourier transform of $G$ in terms of a function $\Phi:\Omega\times\mathbb{R}^d\rightarrow \mathcal{L}(Y^d,Y)$, which satisfies an elliptic PDE on $\Omega$.

 To define the PDE for $\Phi$ we introduce some notation. First observe that  $\xi \in\mathbb{R}^d$  can  be regarded as being  in the space $\mathcal{L}(Y,Y^d)$. In that case we denote its adjoint by $\xi^*\in\mathcal{L}(Y^d,Y)$. Similarly the gradient operator $D$ acts on functions $F:\Omega\rightarrow Y$ to yield a function $DF: \Omega\rightarrow Y^d$ with $(DF)_i:= D_iF$ for $i=1,...,d$ defined as
\begin{align*}
D_iF(a):= \lim_{h \downarrow 0}\frac{F(a( \cdot +he_i))- F(a)}{h},
\end{align*} 
with $e_i$ denoting the standard ith-versor in $\mathbb{R}^d$.
We denote by $D^*$ the corresponding divergence operator, which takes a function $F:\Omega\rightarrow Y^d$ to  a function $D^*F: \Omega\rightarrow Y$. Using this notation, the function $\Phi$ is the solution to the equation
\begin{equation} \label{A10}
\mathcal{P}(D^*+i\xi^*)a(D-i\xi)\Phi(a,\xi)=-\mathcal{P}(D^*+i\xi^*)a \ , \quad a\in \Omega, \ \xi \in\mathbb{R}^d,
\end{equation}
where $\mathcal{P}$ is the projection operator on $L^2(\Omega)$ orthogonal to the constant.   We can see using (\ref{StE}) that the function  $a\rightarrow (D-i\xi)\Phi(a,\xi)\in \mathcal{L}(Y^d,Y^d)$ is in  $L^2(\Omega,\mathcal{L}(Y^d,Y^d))$.  To do this we apply the adjoint $\Phi(a,\xi)^*\in \mathcal{L}(Y,Y^d)$ to (\ref{A10}) and take the expectation. This yields the inequality
\begin{equation} \label{B10}
\|(D-i\xi)\Phi(\cdot,\xi)\|_{L^2(\Omega,\mathcal{L}(Y^d,Y^d))} \ \le \ \frac{1}{\lambda} \quad \forall \xi\in\mathbb{R}^d \ .
\end{equation}
Next we define a function $q:\mathbb{R}^d\rightarrow \mathcal{L}(Y^d,Y^d)$ by
\begin{equation} \label{C10}
q(\xi)= \langle  \ a \ \rangle+\langle  \ a(D-i\xi)\Phi(a,\xi) \ \rangle \ .
\end{equation}
Then from (\ref{KC}), (\ref{StE}) it follows that $q(\xi)$ is Hermitian for $\xi\in\mathbb{R}^d$ and
\begin{equation} \label{D10}
\lambda |y|^2 \le \ y\cdot q(\xi)y \le |y|^2 \quad \forall y\in Y^d \  .
\end{equation}
From (\ref{D10}) we conclude that $\xi^* q(\xi)\xi\in \mathcal{L}(Y,Y)$ is invertible provided $\xi\ne 0$. \\

Generalising the representation of  \cite{cn2} (see equation (2.4) of \cite{cn2} or equation (8.1) of \cite{cs})  to the case of systems, we see that $\nabla_x G(a;x,y)$ is given by the Fourier inversion formula
\begin{multline} \label{E10}
\nabla_xG(a;x,y) \ = \\
 -\frac{i}{(2\pi)^d}\int_{\mathbb{R}^d} d\xi \  e^{-i(x-y)\cdot\xi} \ 
\left\{\xi+(D-i\xi)\Phi(a(\cdot+x), \xi)\xi \ \right\}\left[\xi^*q(\xi)\xi\right]^{-1}  \ .
\end{multline}
Let $\mathcal{H}$ be a Hilbert space with norm $\|\cdot\|$ and consider functions  $f:\mathbb{R}^d\rightarrow \mathcal{H}$.  For $1\le p<\infty$ we define the weak spaces $L_w^p(\mathbb{R}^d,\mathcal{H})$ in the usual way.  That is $f\in L_w^p(\mathbb{R}^d,\mathcal{H})$ if
\begin{equation} \label{F10}
m\{ \xi\in\mathbb{R}^d \ : \ \|f(\xi)\| > k \ \} \ \le \ \frac{C}{k^p} \quad \forall k>0. 
\end{equation}
The norm of $f$, which we denote by $\|f\|_{w,p}$, is the smallest constant $C$ for which (\ref{F10}) holds.  The following lemma can be proved in the the same way as Lemma 3.5 of \cite{cn2}.
\begin{lemma} \label{G10}
Let $\mathcal{H}=L^2(\Omega,\mathcal{L}(Y^d,Y^d))$ and $f:\mathbb{R}^d\rightarrow \mathcal{H}$ be the function 
\begin{equation} \label{H10}
f(\xi) \ = \  \partial_\xi^n(D-i\xi)\Phi(\cdot,\xi) \ = \ \left[\prod_{i=1}^d \left(\frac{\partial}{\partial \xi_i}\right)^{n_i}\right] (D-i\xi)\Phi(\cdot,\xi) \ ,
\end{equation}
where $n=(n_1,\cdots,n_d)$ satisfies $0<|n|<d/2$.  Then $f\in L_w^p(\mathbb{R}^d,\mathcal{H})$ with $p=d/|n|$ and $\|f\|_{w,p}\le C_d(\lambda)$, where the constant $C_d(\lambda)$ depends only on $\lambda$ and $d$. 
\end{lemma}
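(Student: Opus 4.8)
\emph{Proof proposal.} The plan is to follow the proof of Lemma~3.5 of \cite{cn2}. Write $\nabla^\xi:=D-i\xi$ and $\operatorname{div}^\xi:=D^*+i\xi^*$, so that \eqref{A10} reads $\mathcal P\operatorname{div}^\xi a\,\nabla^\xi\Phi=-\mathcal P\operatorname{div}^\xi a$; observe that $\xi\mapsto\nabla^\xi$ is affine, with $\partial_{\xi_j}\nabla^\xi=-ie_j$ and $\partial_{\xi_j}\operatorname{div}^\xi=ie_j^{\,*}$, and that the object of interest is $f(\xi)=\partial_\xi^n\bigl(\nabla^\xi\Phi\bigr)(\cdot,\xi)$. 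The argument has three parts: differentiate \eqref{A10} in $\xi$; establish by induction on $|n|$ the pointwise bound $\|f(\xi)\|_{\mathcal H}\le C_d(\lambda)\,|\xi|^{-|n|}$ for $\xi\neq0$; and deduce membership in $L^p_w(\mathbb{R}^d,\mathcal H)$ with $p=d/|n|$ directly from the definition \eqref{F10}.

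\emph{Step~1 (the differentiated equation).} Applying $\partial_\xi^n$ to \eqref{A10} and the Leibniz rule, and moving to the left the single term in which every $\xi$--derivative falls on $\Phi$, one finds that $\partial_\xi^n\Phi$ solves
\[
\mathcal P\operatorname{div}^\xi a\,\nabla^\xi\bigl(\partial_\xi^n\Phi\bigr)=\mathcal P\operatorname{div}^\xi g_n+\mathcal P\,h_n ,
\]
where $g_n=g_n(\cdot,\xi)$ and $h_n=h_n(\cdot,\xi)$ are finite linear combinations --- with $\xi$--independent coefficients of operator norm $\lesssim_d 1$ built out of $a$ and the coordinate vectors $e_j$ --- of the lower--order quantities $\partial_\xi^m\Phi$ and $\nabla^\xi\partial_\xi^m\Phi$, $|m|\le|n|-1$ (with the constant $\mathrm{Id}$ also occurring in $h_n$). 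For $|n|=1$ this reads $\mathcal P\operatorname{div}^\xi a\,\nabla^\xi\partial_{\xi_j}\Phi=\mathcal P\operatorname{div}^\xi\bigl(i\,a(e_j\Phi)\bigr)-\mathcal P\bigl(ie_j^{\,*}a(\mathrm{Id}+\nabla^\xi\Phi)\bigr)$, and the general case follows by iterating.

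\emph{Step~2 (pointwise decay in $\xi$).} I would prove $\|\partial_\xi^n\bigl(\nabla^\xi\Phi\bigr)(\cdot,\xi)\|_{\mathcal H}\le C_d(\lambda)\,|\xi|^{-|n|}$ for all $\xi\neq0$ by induction on $|n|$, the base case $n=0$ being precisely \eqref{B10}. For the inductive step one uses the equation of Step~1: after a Helmholtz--type splitting, test it against mean--zero scalar functions built from $\partial_\xi^n\Phi$ (admissible since $\langle\partial_\xi^n\Phi\rangle=\partial_\xi^n\langle\Phi\rangle=0$), integrate by parts on $L^2(\Omega)$, and invoke the strong ellipticity \eqref{StE} and \eqref{bdd} to bound $\partial_\xi^n\bigl(\nabla^\xi\Phi\bigr)$ by the lower--order quantities $\partial_\xi^m\bigl(\nabla^\xi\Phi\bigr)$, $|m|\le|n|-1$, gaining one factor of $|\xi|^{-1}$; the inductive hypothesis then closes the estimate, and since no restriction on $|\xi|$ is needed the bound holds for all $\xi\neq0$ (in particular $\|f(\xi)\|_{\mathcal H}\to0$ as $|\xi|\to\infty$, as $|n|\ge1$).

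\emph{Step~3 and the main obstacle.} Given the bound of Step~2, for every $k>0$
\[
m\bigl\{\xi\in\mathbb{R}^d:\ \|f(\xi)\|_{\mathcal H}>k\bigr\}\ \le\ m\bigl\{\xi:\ C_d(\lambda)|\xi|^{-|n|}>k\bigr\}\ =\ c_d\Bigl(\tfrac{C_d(\lambda)}{k}\Bigr)^{d/|n|},
\]
which is exactly \eqref{F10} with $p=d/|n|$; hence $f\in L^p_w(\mathbb{R}^d,\mathcal H)$ and $\|f\|_{w,p}\le c_d^{\,1/p}C_d(\lambda)=C_d(\lambda)$. The hypothesis $0<|n|<d/2$ serves only to guarantee $p=d/|n|\in(2,\infty)$. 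The real difficulty is the inductive gain of exactly one power of $|\xi|^{-1}$ in Step~2: a naive energy estimate is insufficient because the bare lower--order terms $\partial_\xi^m\Phi$ appearing in $g_n$ and $h_n$ are not controlled by $\nabla^\xi\partial_\xi^m\Phi$ through any Poincar\'e inequality that is uniform on $L^2(\Omega)$ --- the translation generator $D$ typically has full spectrum --- so the sharp exponent must be extracted from the precise structure of \eqref{A10}, equivalently from the spectral--shift structure of $\nabla^\xi=D-i\xi$, exactly as in the proof of Lemma~3.5 of \cite{cn2}. Steps~1 and~3 are, respectively, routine bookkeeping and an elementary distribution--function computation.
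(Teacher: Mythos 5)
There is a genuine gap, and it sits exactly where you placed your ``main obstacle'': Step~2. Your plan is to prove the \emph{pointwise} bound $\|\partial_\xi^n(D-i\xi)\Phi(\cdot,\xi)\|_{\mathcal H}\le C_d(\lambda)|\xi|^{-|n|}$ and then read off the weak-$L^p$ statement by Chebyshev. But under the standing assumptions (stationarity and uniform ellipticity only, no mixing or spectral gap), such a pointwise bound is not available and should not be expected. Already for $|n|=1$, the representation (\ref{Q10})--(\ref{W10b}) shows that $\partial_{\xi_i}(D-i\xi)\Phi$ involves $\partial_{\xi_i}T_\xi\mathcal{P}(a)$, whose $L^2(\Omega)$-norm at a fixed $\xi$ is, by Bochner's theorem, an integral of $|\partial_\xi[\hat G(I;\cdot)\zeta_j\zeta_k](\xi-\zeta)|^2\sim|\xi-\zeta|^{-2}$ against the spectral measure of the stationary field $a$; if that measure puts mass near $\zeta=\xi$, the norm at that particular $\xi$ can be much larger than $|\xi|^{-1}$ (it need not even be finite there). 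Only the \emph{distribution function} in $\xi$ of this quantity is controlled by the total mass of the spectral measure --- which is precisely why the conclusion of the lemma is a weak-$L^p$ bound and not a pointwise one. So the inductive ``gain of one power of $|\xi|^{-1}$'' you defer to ``the precise structure of (\ref{A10})'' cannot be extracted: Lemma~3.5 of \cite{cn2}, which you invoke, does not prove any pointwise decay; it proves the weak-type bound by an entirely different mechanism. A symptom of the mismatch is that in your argument the hypothesis $|n|<d/2$ is never used (a pointwise bound $|\xi|^{-|n|}$ would give weak-$L^{d/|n|}$ for any $0<|n|<d$), whereas in the actual proof it is essential.

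The route the paper follows (sketched in the ``Comments on Lemma~\ref{G10}'') is: write $(D-i\xi)\Phi=(1-\mathcal{P}T_{b,\xi})^{-1}T_\xi\mathcal{P}(a)$ as a convergent Neumann series (using $\|b\|_{L^\infty}\le 1-\lambda<1$); differentiate in $\xi$ via (\ref{V10}) so that $f(\xi)$ becomes a finite sum of operators $\mathcal{S}^1,\dots,\mathcal{S}^N$ applied to the deterministic function $\partial_\xi^n[\hat G(I;\xi,0)\xi_j\xi_k]$, which lies in $L^{d/|n|}_w(\mathbb{R}^d)$ pointwise; and then prove that each $\mathcal{S}^j$ is bounded from $L^p_w(\mathbb{R}^d)$ to $L^p_w(\mathbb{R}^d,L^2(\Omega))$ for $2<p<\infty$, by Hunt's interpolation between an $L^2$ bound (explicit kernels in the Neumann series, convolution with $\nabla^2G(I;\cdot,0)$ not increasing the norm) and an $L^\infty$ bound (Bochner's theorem). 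The weak-$L^p$ membership of $f$ is inherited through the operator bound, not deduced from pointwise decay, and the constraint $|n|<d/2$ is exactly what puts $p=d/|n|$ strictly above $2$ so that the interpolation applies. Your Steps~1 and~3 are fine as bookkeeping, but the core estimate must be reorganized along these lines; as written, the proposal does not prove the lemma.
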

{\sc Comments on Lemma \ref{G10}}.\\
We give below the main ideas to prove Lemma \ref{G10} as in \cite{cn2} (Lemma 3.5). We use a scalar notation, but all the arguments and techniques hold also for systems.
The proof mainly relies on the representation formula for $\Phi$ [see Lemma 3.2 in \cite{cn2}]
\begin{equation}\label{Q10}
(D-i\xi)\Phi= (1-\mathcal{P}T_{b,\xi})^{-1}T_{\xi}\mathcal{P}(a)
\end{equation}
where
\begin{eqnarray}\label{U10}
T_{\xi}(\rho)(a):=\int_{\mathbb{R}^d} \nabla^2 G(I;x,0) e^{-ix\cdot\xi}\left[\rho\right](a(\cdot + x)) dx,\\\notag
T_{b,\xi}(\rho)(a):=\int_{\mathbb{R}^d} \nabla^2 G(I;x,0) e^{-ix\cdot\xi}\left[b\rho\right](a(\cdot + x)) dx\\\notag
\end{eqnarray}
if $\rho$ is a random variable $\rho: \Omega\rightarrow \mathbb{R}^d$ and $b= I-a$.
This can be obtained from (\ref{A10}) by a standard perturbation argument applied to the operator $(D^*+i\xi^*)a(D-i\xi)$.
Being $||T_{b,\xi}||\leq ||b||_{L^\infty(\Omega)}<1$, (\ref{Q10}) is well defined and its Neumann series converges.
Moreover, derivatives with respect to $\xi$ of (\ref{U10}) can be explicitly written as
\begin{eqnarray}\label{V10}
\partial^n_{\xi}T_{\xi}(\rho)(a):=(-i)^{|n|}\int_{\mathbb{R}^d} x^{n}\nabla^2 G(I;x,0) e^{-ix\cdot\xi}\left[\rho\right](a(\cdot + x)) dx\\\notag
\partial^n_{\xi}T_{b,\xi}(\rho)(a):=(-i)^{|n|}\int_{\mathbb{R}^d} x^{n}\nabla^2 G(I;x,0) e^{-ix\cdot\xi}\left[b\rho\right](a(\cdot + x)) dx.\\\notag
\end{eqnarray}
Note that in a rigorous formulation, to assure the convergence of the integrals in (\ref{U10}), one should first work with the massive Green function $G_T$ associated to the operator $T^{-1} - \nabla\cdot a \nabla$ (\cite{GO}, Definition 2.4) and then pass to the limit $T \rightarrow +\infty$ and obtain $G$. To keep notation lean, we neglect this issue. For the same reason, we restrict our attention to the case $d=3$.\\
Due to (\ref{Q10}) and (\ref{V10}), $f(\xi)$ in (\ref{H10}) is equal to a sum of terms containing derivatives as in (\ref{V10}).
Hence, for $d=3$ we get ($|n|=1<\frac{3}{2}$)
\begin{align}\label{W10b}
\partial_{\xi_i}(D-i\xi)\Phi=&\ (1-\mathcal{P}T_{b,\xi})^{-1}\partial_{\xi_i}T_{\xi}\mathcal{P}(a)\notag\\
& +(1-\mathcal{P}T_{b,\xi})^{-1}\mathcal{P}\partial_{\xi_i}T_{b,\xi}(1-\mathcal{P}T_{b,\xi})^{-1}T_{\xi}\mathcal{P}(a).
\end{align}
More precisely, each term on the r.h.s. of (\ref{W10b}) may be rewritten an operator acting on $\partial_{\xi_i}\left[\hat{G}(I;\xi,0)\xi_j\xi_k\right]\in L^3_w(\mathbb{R}^3)$, i.e.
\begin{align}
&(1-\mathcal{P}T_{b,\xi})^{-1}\partial_{\xi_i}T_{\xi}\mathcal{P}(a)= \mathcal{S}^1\bigl(\partial_{\xi_i}\left[\hat{G}(I;\xi,0)\xi_j\xi_k\right] \bigr), \label{W10c}\\
&(1-\mathcal{P}T_{b,\xi})^{-1}\mathcal{P}\partial_{\xi_i}T_{b,\xi}(1-\mathcal{P}T_{b,\xi})^{-1}T_{\xi}\mathcal{P}(a)\notag\\
&\hspace{4.2cm}= \mathcal{S}^2\bigl(\partial_{\xi_i}\left[\hat{G}(I;\xi,0)\xi_j\xi_k\right]\bigr). \label{W10d}
\end{align}
Lemma \ref{G10} follows once it is proved that $\mathcal{S}^1$ and $\mathcal{S}^2$ are bounded from $L^p_w(\mathbb{R}^3)$ to $L^p_w(\mathbb{R}^3, L^2(\Omega))$ for every $p\in (2,+\infty)$.
The most challenging operator is $\mathcal{S}^2$: The one associated to the second term on the r.h.s. of \eqref{W10b}, where the derivative falls on $(1-\mathcal{P}T_{b,\xi})^{-1}$. To deal with it, it is convenient to first prove its boundedness from $L^p(\mathbb{R}^3)$ to $L^p(\mathbb{R}^3, L^2(\Omega))$ for $p\in \{2,+\infty\}$ and then use Hunt's interpolation theorem.
The case $p=+\infty$ follows by an application of Bochner's theorem (cf. \cite{cn2}, formula (3.14)), while for $p=2$ the main idea relies on the fact that $(1-\mathcal{P}T_{b,\xi})^{-1}$ can be written in Neumann series and every term can be explicitly expressed. Once explicit, one can recognise that each term acts on a function by essentially taking multiple convolutions of its Fourier transform with the Hessian of the standard Green function $G(I;x,y)$. Such a convolution kernel does not increase the (Frobenius) norm of the function.\\
A generalisation to higher dimensions is in the same spirit but has to deal with more involved operators $\mathcal{S}^{1}, \mathcal{S}^2,...\mathcal{S}^N$ (for $N=N(d)$).
The upper bound for the number of derivatives $|n|$ is related to the strict condition $p>2$ which ensures the boundedness of the operators between the $L^p$-weak spaces (see Lemma 3.7 and Lemma 3.9 in \cite{cn2}).

\medskip

The following lemma implies (\ref{O.5bis}) of Corollary \ref{Cor2} provided $d$ is odd. In order to prove (\ref{O.5bis}) when $d$ is even we would need to extend Lemma \ref{G10} to include fractional derivatives, something that is also required in  \cite{cn2}. 
\begin{lemma} \label{HH10}
Let $d\ge 3$ and $n=(n_1\cdots,n_d)$  be a non-negative integer tuple such that $d/2-1<|n|<d/2$. Then for any $R>0$ there exists a constant $C_d(\lambda)$ depending only on $\lambda,d$ such that 
\begin{equation} \label{I10}
\left\langle\int_{|x|<R} x^{2n}|\nabla_xG(\cdot;x,0)|^2 \ dx\right\rangle \ \le \   C_d(\lambda) R^{2(|n|+1)-d} \ .
\end{equation} 
\end{lemma}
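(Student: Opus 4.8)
The plan is to run the Fourier-space argument of \cite{cn2}: use the representation formula \eqref{E10} together with Lemma \ref{G10}, the a priori bound \eqref{B10}, and the ellipticity \eqref{D10} of $q$ to place the relevant symbol in a weak Lebesgue space, and then invert the Fourier transform on $\{|x|<R\}$ by a level-set splitting argument. All Fourier integrals and Fubini/Plancherel manipulations below are to be read, as in the comments after Lemma \ref{G10}, for the massive Green function attached to $T^{-1}-\nabla\cdot a\nabla$, with the passage $T\to\infty$ at the end; I suppress this. \emph{(Step 1: a Fourier identity.)} Put $m_0(\xi):=[\xi^\ast q(\xi)\xi]^{-1}$ and $\Psi(a,\xi):=\bigl\{\xi+(D-i\xi)\Phi(a,\xi)\xi\bigr\}m_0(\xi)$, so \eqref{E10} with $y=0$ reads $\nabla_xG(a;x,0)=-\tfrac{i}{(2\pi)^d}\int_{\R^d}e^{-ix\cdot\xi}\Psi\bigl(a(\cdot+x),\xi\bigr)\,d\xi$. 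Squaring, taking $\langle\cdot\rangle$, and applying Fubini, the shift $a\mapsto a(\cdot+x)$ is common to both factors, so by stationarity the second moment $\langle\Psi(a(\cdot+x),\xi)\,\overline{\Psi(a(\cdot+x),\xi')}\rangle$ equals $\langle\Psi(a,\xi)\,\overline{\Psi(a,\xi')}\rangle$ and is independent of $x$; hence $\langle|\nabla_xG(\cdot;x,0)|^2\rangle=\langle|\Psi^\vee(a,x)|^2\rangle$, where $\Psi^\vee(a,\cdot)$ is the inverse Fourier transform in $x$ of $\xi\mapsto-i\Psi(\cdot,\xi)$ (an $L^2(\Omega)$-valued symbol). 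Since $x^n\Psi^\vee(\cdot,x)$ is, up to a unimodular factor, the inverse Fourier transform of $\partial_\xi^n\Psi$, this yields
\[
\Bigl\langle\int_{|x|<R}x^{2n}\,|\nabla_xG(\cdot;x,0)|^2\,dx\Bigr\rangle\;=\;c_d\int_{|x|<R}\bigl\|(\partial_\xi^n\Psi)^\vee(\cdot,x)\bigr\|_{L^2(\Omega)}^2\,dx .
\]

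\emph{(Step 2: the symbol estimate.)} I claim $\partial_\xi^n\Psi\in L^{\sigma,\infty}(\R^d;L^2(\Omega))$ with $\sigma:=d/(|n|+1)$ and norm $\le C_d(\lambda)$. From \eqref{D10}, $|m_0(\xi)|\le\lambda^{-1}|\xi|^{-2}$; differentiating \eqref{C10} and using $|a|\le1$ from \eqref{bdd} gives $|\partial_\xi^k q(\xi)|\le\bigl\|\partial_\xi^k[(D-i\xi)\Phi(\cdot,\xi)]\bigr\|_{L^2(\Omega)}$, which, by Lemma \ref{G10}, belongs to $L^{d/|k|,\infty}(\R^d)$ with norm $\le C_d(\lambda)$ for $1\le|k|<d/2$ (and is bounded for $k=0$). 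An induction on $|k|$ through the quotient rule then shows $\partial_\xi^k m_0\in L^{d/(2+|k|),\infty}(\R^d)$, hence $\partial_\xi^k(\xi m_0(\xi))\in L^{d/(1+|k|),\infty}(\R^d)$, all with constants $C_d(\lambda)$: i.e. $\xi m_0$ is an order $-1$ symbol in the weak scale. Expanding $\partial_\xi^n\Psi$ by Leibniz, each summand is either $\partial_\xi^n(\xi m_0)\in L^{d/(1+|n|),\infty}$ or a product $\partial_\xi^k[(D-i\xi)\Phi(\cdot,\xi)]\cdot\partial_\xi^{n-k}[\xi m_0(\xi)]$ with $0\le k\le n$; since $|k|\le|n|<d/2$, Lemma \ref{G10} (or \eqref{B10} if $k=0$) puts the first factor in $L^{d/|k|,\infty}(\R^d;L^2(\Omega))$ (resp.\ $L^\infty$), and the second in $L^{d/(1+|n-k|),\infty}(\R^d)$. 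The weak-type H\"older inequality $L^{p,\infty}\cdot L^{r,\infty}\subset L^{s,\infty}$, $\tfrac{1}{s}=\tfrac{1}{p}+\tfrac{1}{r}$, with $|k|+|n-k|=|n|$, lands every summand in $L^{d/(|n|+1),\infty}$.

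\emph{(Step 3: inversion on a ball, and conclusion.)} I would prove the auxiliary fact that for a Hilbert space $\mathcal H$, $1<\sigma<2$, and $h\in L^{\sigma,\infty}(\R^d;\mathcal H)$ one has $\int_{|x|<R}\|h^\vee(x)\|_{\mathcal H}^2\,dx\le C(d,\sigma)\,R^{2d/\sigma-d}\,\|h\|_{L^{\sigma,\infty}}^2$ for all $R>0$. For a threshold $t>0$ split $h=h\mathbf{1}_{\{\|h\|\le t\}}+h\mathbf{1}_{\{\|h\|>t\}}$; the layer-cake formula gives $\|h\mathbf{1}_{\{\|h\|\le t\}}\|_{L^2}^2\le\tfrac{2}{2-\sigma}\|h\|_{L^{\sigma,\infty}}^\sigma t^{2-\sigma}$ (finite as $\sigma<2$) and $\|h\mathbf{1}_{\{\|h\|>t\}}\|_{L^1}\le\tfrac{\sigma}{\sigma-1}\|h\|_{L^{\sigma,\infty}}^\sigma t^{1-\sigma}$ (finite as $\sigma>1$), so by Plancherel and by $\|g^\vee\|_{L^\infty}\le c_d\|g\|_{L^1}$,
\[
\int_{|x|<R}\|h^\vee\|^2\;\le\;\tfrac{2}{2-\sigma}\|h\|_{L^{\sigma,\infty}}^\sigma\,t^{2-\sigma}\;+\;c_d\Bigl(\tfrac{\sigma}{\sigma-1}\Bigr)^2 R^d\,\|h\|_{L^{\sigma,\infty}}^{2\sigma}\,t^{2-2\sigma},
\]
and balancing at $t=R^{d/\sigma}\|h\|_{L^{\sigma,\infty}}$ gives the claim. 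Applying this with $\mathcal H=L^2(\Omega)$, $h=\partial_\xi^n\Psi$, $\sigma=d/(|n|+1)$ — for $d\ge3$ the hypothesis $d/2-1<|n|<d/2$ forces $1<\sigma<2$, and $2d/\sigma-d=2(|n|+1)-d$ — and plugging back into Step 1 yields $\langle\int_{|x|<R}x^{2n}|\nabla_xG(\cdot;x,0)|^2\,dx\rangle\le C_d(\lambda)\,R^{2(|n|+1)-d}$. (For even $d$ the admissible range contains no integer tuple, so the statement is vacuous.)

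\emph{(Expected main obstacle.)} The technical heart is Step 2, specifically showing that $\xi\mapsto[\xi^\ast q(\xi)\xi]^{-1}$ behaves like a symbol of order $-2$ in the weak-$L^p$ scale; this is not purely formal, as it forces one to differentiate \eqref{C10} and to use Lemma \ref{G10} in full strength (not merely \eqref{B10}). The restriction $|n|<d/2$ is exactly what guarantees that every derivative $\partial_\xi^k[(D-i\xi)\Phi]$ occurring in the Leibniz expansion ($|k|\le|n|$) is covered by Lemma \ref{G10}, while $|n|>d/2-1$ is precisely what makes $\sigma<2$; the latter is indispensable, since for $|n|\le d/2-1$ the $L^2$-part of the splitting in Step 3 diverges, reflecting the genuine $L^2$-non-integrability of the $x=0$ singularity of $\nabla_xG$ when $d\ge3$.
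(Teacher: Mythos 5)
Your argument is correct, and its first half coincides with the paper's: starting from \eqref{E10}, you trade the weight $x^n$ for $\partial_\xi^n$ of the symbol, and you control the differentiated symbol in a weak Lebesgue scale by combining Lemma \ref{G10} with \eqref{B10}, \eqref{C10} and \eqref{D10} (this is exactly what the paper invokes when it asserts that the function $f$ in \eqref{J10} lies in $L^p_w$ with $p=d/|n|$; note your claim, $\partial_\xi^n\Psi\in L^{d/(|n|+1),\infty}$, is in fact the slightly weaker statement, since it follows from the paper's by weak H\"older with $|\xi|^{-1}\in L^{d,\infty}$). Where you genuinely diverge is the passage back to physical space. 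The paper inserts a smooth cutoff $\phi(x/R)$, uses stationarity to write the quantity as a double frequency integral against $R^d\hat\phi(R(\xi-\xi'))$ as in \eqref{K10}--\eqref{L10}, and then estimates that double integral by the weak-type inequality \eqref{M10} with $q=1$ and $q=2$ on near-diagonal regions \eqref{N10}, dyadic summation, and the decay of $\hat\phi$. You instead use stationarity at the very start to identify $\langle x^{2n}|\nabla_xG(\cdot;x,0)|^2\rangle$ pointwise with $\|(\partial_\xi^n\Psi)^\vee(\cdot,x)\|_{L^2(\Omega)}^2$, reducing the lemma to a clean deterministic fact: for a Hilbert-space-valued $h\in L^{\sigma,\infty}$ with $1<\sigma<2$, the local $L^2$ mass of $h^\vee$ on $\{|x|<R\}$ is $\lesssim R^{2d/\sigma-d}$, proved by a single-threshold $L^1+L^2$ splitting, Plancherel, and the $L^\infty$ bound on the transform of the $L^1$ part. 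This is more elementary and more modular than the paper's double-integral estimate (no cutoff, no $\hat\phi$ decay, no two-parameter dyadic bookkeeping), while using exactly the same range conditions: $\sigma<2$ is your $|n|>d/2-1$, $\sigma>1$ is $|n|<d-1$, and $|n|<d/2$ is what makes Lemma \ref{G10} applicable to every derivative in the Leibniz expansion.

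One small caution in your Step 2: the word ``hence'' in passing from $\partial_\xi^k m_0\in L^{d/(2+|k|),\infty}$ to $\partial_\xi^k(\xi m_0)\in L^{d/(1+|k|),\infty}$ hides a real point, since multiplication by $\xi$ is not bounded on weak Lebesgue spaces. It is true, but you should argue it structurally: every term produced by differentiating $m_0=[\xi^*q(\xi)\xi]^{-1}$ carries an explicit factor $m_0^{r+1}\lesssim|\xi|^{-2(r+1)}$ together with factors $\partial^{j}(\xi^*q\xi)$ bounded by $\sum|\xi|^{2-|j_1|}\,|\partial^{j_2}q|$, so each term retains at least two explicit negative powers of $|\xi|$ and multiplying by $\xi$ still leaves the claimed weak-space membership (with constants depending only on $d,\lambda$ through Lemma \ref{G10} and \eqref{D10}). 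With that made explicit, the proof is complete.
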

\begin{proof}
We have from (\ref{E10}) on integration by parts that
\begin{multline} \label{J10}
x^n\nabla_xG(a;x,0) \ = \ \int_{\mathbb{R}^d} d\xi \  e^{-ix\cdot\xi} \ \frac{f(a(\cdot+x), \xi)}{|\xi|} \ , \quad {\rm where} \\
f(a,\xi)   \ = \   |\xi|\frac{i^{3-|n|}}{(2\pi)^d}\partial_\xi^n\left[\left\{\xi+(D-i\xi)\Phi(a, \xi)\xi \ \right\}\left\{\xi^*q(\xi)\xi\right\}^{-1}\right] \ .
\end{multline}
Taking $\mathcal{H}=L^2(\Omega,\mathcal{L}(Y,Y^d))$, it follows from (\ref{C10}),  (\ref{D10}) and Lemma \ref{G10} that $f:\mathbb{R}^d\rightarrow\mathcal{H}$ is in  $ L_w^p(\mathbb{R}^d,\mathcal{H})$ with $p=d/|n|$, and $\|f\|_{w,p}\le C_d(\lambda)$ for some constant $C_d(\lambda)$ depending only on $\lambda,d$. Let $\phi$ be a cut-off function for $\{|x|<1\}$ in $\{|x|<2\}$.  Then from  (\ref{J10}) we have that
\begin{multline} \label{K10}
\left\langle\int_{\mathbb{R}^d} \phi(x/R)x^{2n}|\nabla_xG(\cdot;x,0)|^2 \ dx\right\rangle \ = \\
 \int_{\mathbb{R}^d\times\mathbb{R}^d} d\xi \ d\xi'  \ R^d \hat{\phi}(R(\xi-\xi')) \left\langle \frac{f(\cdot, \xi)^*}{|\xi|}\frac{f(\cdot, \xi')}{|\xi'|} \right\rangle \ .
\end{multline}
It follows from (\ref{K10}) that
\begin{multline} \label{L10}
\left| \left\langle\int_{\mathbb{R}^d} \phi(x/R)x^{2n}|\nabla_xG(\cdot;x,0)|^2 \ dx\right\rangle\right| \ \le \\ 
 \int_{\mathbb{R}^d\times\mathbb{R}^d} d\xi \ d\xi'  \ R^d |\hat{\phi}(R(\xi-\xi'))| \frac{g(\xi)}{|\xi|}\frac{g(\xi')}{|\xi'|} \ ,
\end{multline} 
where $g:\mathbb{R}^d\rightarrow \mathbb{R}^+$ is in $L_w^p(\mathbb{R}^d)$ with $p=d/|n|$. 

We can estimate the RHS of (\ref{L10}) by using the inequality
\begin{equation} \label{M10}
\int_E g(\xi)^q \ d\xi \ \le \ C_qm(E)^{1-q/p} \|g\|_{p,w}^{q/p} \ ,  \quad \forall  \ {\rm measurable \ } E\subset \mathbb{R}^d, \ 1\le q<p,
\end{equation}
where the constant $C_q$ diverges as $q\rightarrow p$.  We consider for any $A>0$ the integral
\begin{multline} \label{N10}
R^d \int_{|\xi-\xi'|<A/R} d\xi \ d\xi'  \ \frac{g(\xi)}{|\xi|}\frac{g(\xi')}{|\xi'|} \ \le \ 
R^d\left[ \int_{|\xi|<4A/R} d\xi  \ \frac{g(\xi)}{|\xi|}\right]^2+ \\
2R^d \int_{|\xi|>2A/R, \ |\zeta|<A/R} d\xi \ d\zeta  \ \frac{g(\xi)g(\xi+\zeta)}{|\xi|^2} \ .
\end{multline}
Taking $q=1$ in (\ref{M10}), we see that the first term on the RHS of (\ref{N10}) is bounded by
\begin{equation} \label{O10}
CR^d \left(\frac{A}{R}\right)^{2(d-1-d/p)}\left[ \sum_{j=0}^\infty 2^{-j(d-1-d/p)}\right]^2 \ ,
\end{equation}
where $C$ is a constant depending only on $d$ and $\|g\|_{p,w}$.  Taking $p=d/|n|$, we see that the sum in (\ref{O10}) converges provided  $|n|<d-1$. If this is the case then   the first term on the RHS of (\ref{N10}) is bounded by $CA^{2(d-1-|n|)}R^{2(|n|+1)-d}$, where $C$ depends only on $\lambda,d$.  To estimate the second term in (\ref{N10}) we take $q=2$ in (\ref{M10}). Thus it is bounded by 
\begin{equation} \label{P10}
CR^d \left(\frac{A}{R}\right)^{2(d-1-d/p)} \sum_{j=0}^\infty 2^{j(d-2-2d/p)} \ ,
\end{equation}
where $C$ is a constant depending only on $d$ and $\|g\|_{p,w}$. Taking $p=d/|n|$ as before, we see that the sum in (\ref{P10}) converges provided  $|n|>d/2-1$.  We have therefore shown that if $d/2-1<|n|<d/2$ then the LHS of (\ref{N10}) is bounded by  $CR^{2(|n|+1)-d}A^{2(d-1-|n|)}$, where $C$ is a constant depending only on $\lambda,d$.  Now using the fact that for any $k=1,2,..,$ there is a constant $C_k$ such that $|\hat{\phi}(\zeta)|\le C_k/[1+|\zeta|^k]$, we conclude from (\ref{L10}) that (\ref{I10}) holds.  
\end{proof}

\section{Appendix}\label{Ax}
{ \sc Proof of Lemma \ref{U}.} Let $a\in\Omega$ and the domain $D$ be fixed, and let us assume that $G^{(1)}_D(a;\cdot,\cdot)$ and $G^{(2)}_{D}(a; \cdot, \cdot)$ are two Green's functions. For the sake of simplicity, we skip the argument $a$ in $G_D^{(1)}$ and $G_D^{(2)}$.
We argue similarly to \cite{GO}, Subsection A.3. : Let $\rho\in C^{\infty}_0(\{|y| < 1\})$. For $u := G^{(1)}_D- G^{(2)}_D$, we define for $x,y \in \mathbb{R}^d$
\begin{align*}
\hat u( x, y)&:=  \int \rho(y-y') u(x,y') \, dy'\\
&= \int \rho(y'-y) \bigl(G^{(1)}_{D}(a; x, y')-  G^{(2)}_{D}(a; x, y')\bigr) \, dy'.
\end{align*}
We first argue that for every $y\in \mathbb{R}^d$,
\begin{align}\label{Pu2}
\begin{cases}
 -\nabla\cdot a\nabla \hat u(\cdot, y) = 0 \mbox{ \hspace{0.5cm} in $D$}\\
 \ \ \hat u(\cdot , y) = 0 , \hspace{1.85cm} \mbox{in $\partial D$}.
\end{cases}
\end{align}
By definition of $u$, it indeed holds that for almost every $y'\in \mathbb{R}^d$ and every $\zeta \in C^\infty_0(D)$
\begin{align*}
\int \nabla \zeta(x) \cdot a(x) \nabla u(x,&y')\, dx\\
& = \int \nabla \zeta(x) \cdot a(x) \bigl(\nabla G^{(1)}_D(x,y')- \nabla G^{(2)}_D(x,y') \bigr)\, dx\stackrel{(\ref{TG})}{ = }0.
\end{align*}
Multiplying this with $\rho(y' - y )$ and integrating in $y'$, we thus have for every $\zeta \in C^\infty_0(D)$ that
\begin{align*}
 0= \int  \int \nabla \zeta(x)& \cdot a(x) \nabla\bigl(\rho(y'-y)u(x,y')\bigr) \, dx \, dy'\\
 & = \int \nabla \zeta(x) \cdot a(x) \nabla \bigl(\int  \rho(y'-y )u(x,y')\, dy'\bigr)\, dx,
\end{align*}
i.e. for every $y\in \mathbb{R}^d$, $\hat u(\cdot, y)$ solves the equation in (\ref{Pu2}). The boundary conditions are trivially satisfied since by definition of $G^{(1)}$ and $G^{(2)}$ we have
\begin{align*}
\int_{\mathbb{R}^d\setminus D} |\hat u(x,y)&| dx = \int_{\mathbb{R}^d\setminus D} |\int \rho(y'-y )u(x,y') dy'| dx\\
&\leq  \int |\rho(y'-y)| \int_{\mathbb{R}^d\setminus D}\bigl| G^{(1)}(x,y')-  G^{(2)}(x,y')\bigr|\, dx \, dy' \stackrel{(\ref{TG})}{=} 0.
\end{align*}

\medskip

We now claim that for every $y\in \mathbb{R}^d$ fixed it holds
\begin{align}\label{U1}
\int |\nabla \hat u(x,y)|^2\, dx < +\infty.
\end{align}
We start by noting that since
\begin{align*}
\int |\nabla \hat u(x,y)|^2\, dx=  \int |\int \rho(y'-y) \nabla u(x, y') \, dy' |^2 \, dx, 
\end{align*}
after smuggling into the r.h.s. the weight \mbox{$ \bigl( |x-y'|^{\alpha} \wedge 1 \bigr)^{\frac 1 2}$} and using H\"older's inequality in the $y'$-variable combined with \mbox{$\rho\in C^\infty_0(\{|y| < 1\})$}, this may be estimated by
\begin{align}\label{W2}
\int& |\nabla \hat u(x,y)|^2\notag\\
\leq & ||\rho||_\infty \int \int_{|y'-y|< 1}\bigl( |x-y'|^{\alpha} \wedge 1 \bigr)^{-1} \int_{|y' - y|< 1} \bigl( |x-y'|^{\alpha} \wedge 1 \bigr) |\nabla u (x, y')|^2\notag \\
=&||\rho||_\infty\int \int_{|y'-y|< 1} ( |x-y'|^{-\alpha} \vee 1)\int_{|y' - y|< 1}\bigl( |x-y'|^{\alpha} \wedge 1 \bigr) |\nabla u (x, y')|^2 .
\end{align}
For $i\in \{1,2 \}$, let $\alpha_i < d$ be the exponent in the inequality (\ref{WB}) for $G^{(i)}$. Without loss of generality, we assume that $\alpha_1 < \alpha_2 < d$ so that for every $z\in \mathbb{Z}^d$ we have
\begin{align}\label{WBu}
&\int_{|y-z|< 2 }\int_{|x-z|< 2} |x-y|^{\alpha_2}|\nabla u(x,y)|^2 \notag \\
&\leq \int_{|y-z|< 2 }\int_{|x-z|< 2} |x-y|^{\alpha_2}\bigl( |\nabla G^{(1)}(x,y)|^2 + |\nabla G^{(2)}(x,y)|^2 \bigr)\notag \\
&\stackrel{|x-y| < 4 , \ \alpha_1 < \alpha_2}{\lesssim}\int_{|y-z|< 2 }\int_{|x-z|< 2}|x-y|^{\alpha_1}|\nabla G^{(1)}(x,y)|^2 \notag\\
&\hspace{3.2cm}+ \int_{|y-z|< 2 }\int_{|x-z|< 2}|x-y|^{\alpha_2}|\nabla G^{(2)}(x,y)|^2 \stackrel{(\ref{WB})}{<} +\infty.
\end{align}
Since $\alpha= \alpha_2 <d$ we estimate for every $x\in \mathbb{R}^d$
\begin{align*}
\int_{|y'-y|< 1} ( |x-y'|^{-\alpha} \vee 1) \lesssim 1,
\end{align*}
we may reduce (\ref{W2}), with this choice of $\alpha=\alpha_2$, to
\begin{align*}
\int |\nabla \hat u(x,y)|^2 &\lesssim ||\rho||_\infty\int \int_{|y'- y|< 1} \bigl( |x-y'|^{\alpha} \wedge 1 \bigr) |\nabla u (x, y')|^2,
\end{align*}
and, by choosing $z\in \mathbb{Z}^d$ such that $\{|y'-y|<1 \} \subset \{|y' -z|<2\}$, to
\begin{align*}
&\int |\nabla \hat u(x,y)|^2 \lesssim ||\rho||_\infty\int \int_{|y'- z|< 2} \bigl( |x-y'|^{\alpha} \wedge 1 \bigr) |\nabla u (x, y')|^2.
\end{align*}
We now appeal to (\ref{WB}) and (\ref{WB2}) to conclude (\ref{U1}).\\
Thanks to (\ref{U1}), we may test the equation in (\ref{Pu2}) with $\hat u(\cdot, y)$ itself, and obtain by (\ref{KC}) and (\ref{TG}), that $\hat u(\cdot, y)=0$ almost everywhere in $\mathbb{R}^d$. 
It thus follows by definition of $\hat u$ that for every $y\in \mathbb{R}^d$ and $\zeta \in C^\infty_0(\mathbb{R}^d)$ we have
\begin{align*}
\int \int \rho(y'-y) \zeta(x) \bigl( G_D^{(1)}(x,y') - G_D^{(2)}(x,y') \bigr) \, dx \, dy'= 0.
\end{align*}
Therefore, by the arbitrariness of $y\in \mathbb{R}^d$ and of both $\zeta \in C^\infty_0(\mathbb{R}^d)$ and $\rho \in C^\infty_0(\{|y| < 1\})$ we conclude that $G^{(1)}_D= G^{(2)}_D$ for almost every $x,y \in \mathbb{R}^d$.

\bigskip

{ \sc Proof of Remark \ref{R.1} }.
We start  by showing that (\ref{R1}) implies that 
\begin{align}\label{r.3}
\sup_{|z|< R}|u(z)|\lesssim \biggl(\fint_{|z|< 8R}| u|^2 \biggr)^{\frac{1}{2}}
\end{align}
if $-\nabla\cdot a\nabla u=0$ in $\{ |x|< 8R \}$. And that (\ref{R2}), together with (\ref{R1}), yields in addition
\begin{align}\label{r.4}
\sup_{|z|< R}|\nabla u(z)|\lesssim \biggl(\fint_{|z|< 4R}|\nabla u|^2 \biggr)^{\frac{1}{2}}.
\end{align}
From this, it immediately follows that $u$ and $\nabla u$ are bounded in $\{|z|< R\}$, provided u has finite energy in $\{|z| < 8R\}$.

\medskip

To show (\ref{r.3}) and (\ref{r.4}), we use an argument similar to the one of Corollary \ref{Cor2}, Step 3. Moreover, with a standard approximation argument we can assume that u and a are smooth. By scaling, without loss of generality we can reduce ourselves to consider the case $R=1$. For a cut-off function $\eta$ of $\{|x|<2\}$ in $\{|x|<4\}$, we have\footnote{Similarly to Corollary \ref{Cor2} the correct notation would be $u\otimes \nabla\eta$ instead of $u\nabla \eta$.}
\begin{align}\label{R.A}
-\nabla\cdot a\nabla{(\eta{u})}=-\nabla\cdot a (\nabla\eta u )-\nabla\eta\cdot a\nabla{u}-\eta \nabla\cdot a \nabla u.
\end{align}  
Therefore, since u is assumed $a$-harmonic, $v:=\eta u$ satisfies in $\{|x|<8\}$ the equation
\begin{equation*}
 -\nabla\cdot a\nabla{v}=-\nabla\cdot a (\nabla\eta u )-\nabla\eta\cdot a\nabla{u}.
\end{equation*}
By the representation formula and the choice of the cut-off $\eta$, we get for almost every $\{|x| < 1\}$
\begin{align*}
u(x)=&\int u(z) \nabla G(a;z,x)\cdot a\nabla\eta(z)dz -\int G(a;z,x)\nabla \eta(z) \cdot a\nabla u(z)dz,\\
\nabla u(x)=&\int u(z) \nabla\nabla G(a;z,x)\cdot a\nabla\eta(z)dz -\int \nabla_xG(a;z,x)\nabla \eta(z) \cdot a\nabla u(z)dz,
\end{align*}
and since
\begin{align*}
\int \nabla\nabla G(a; z, x) \cdot a \nabla \eta(z)\stackrel{(\ref{Sym})-(\ref{TGy})}{=} \nabla\eta(x) \stackrel{|x|<1}{=} 0,
\end{align*}
it also follows
\begin{align*}
u(x)=&\int u(z)\nabla G(a;z,x)\cdot a\nabla\eta(z)dz -\int G(a;z,x)\nabla \eta(z) \cdot a\nabla u(z)dz,\\
\nabla u(x)=&\int (u(z)-c) \nabla\nabla G(a;z,x)\cdot a\nabla\eta(z)dz -\int \nabla_xG(a;z,x)\nabla \eta(z) \cdot a\nabla u(z)dz,
\end{align*}
for any constant $c \in \mathbb{R}$.
We now apply H\"older's inequality in the integrals on the r.h.s. of the previous inequalities and obtain by (\ref{KC}) and the choice of the test-function $\eta$
\begin{align}
 &|u(x)|\lesssim\int_{2<|z|<4}|\nabla G(a;z,x)||u(z)| +\int_{1<|z|<2}|G(a;z,x)||\nabla{u(z)}| \label{r.6}\\
&\lesssim \biggl(\int_{2<|z|<4}|\nabla G(a;z,x)|^2+|G(a;z,x)|^2\biggr)^{\frac{1}{2}}\biggl(\int_{|z|<4}|u|^2+|\nabla{u}|^2\biggr)^{\frac{1}{2}}\label{r.5},
\end{align}
and
\begin{align}\label{r.10}
 |\nabla u(x)|\lesssim  \biggl(\int_{2<|z|<4}|\nabla\nabla G(a;z,x)|^2&+|\nabla_x G(a;z,x)|^2\biggr)^{\frac{1}{2}}\notag \\
 & \times\biggl(\int_{|z|<4}|u-c|^2+|\nabla{u}|^2\biggr)^{\frac{1}{2}}.
\end{align}
H\"older's inequality in $\{2<|z|<4\}$ and Sobolev's inequality in $\{|z|>2\}$ imply
\begin{align*}
\int_{2<|z|<4}&|G(a;z,x)|^2\lesssim \biggl(\int_{2<|z|<4}|G(a;z,x)|^{\frac{2d}{d-2}}\biggr)^{\frac{d-2}{d}}\lesssim \int_{|z|>2}|\nabla{G(a;z,x)}|^2,
\end{align*}
and analogously
\begin{align*}
\int_{2<|z|<4}&|\nabla_x G(a;z,x)|^2\lesssim \int_{|z|>2}|\nabla_z\nabla_x{G(a;z,x)}|^2 { =} \int_{|z|>2}|\nabla\nabla{G(a;z,x)}|^2 ,
\end{align*}
where in the last identity we use the symmetry of G, cf. (\ref{Ex.28a}).
By plugging the two previous inequalities in (\ref{r.5}) and (\ref{r.10}) respectively, we reduce to
\begin{align*}
 |u(x)|\lesssim& \biggl(\int_{|z|> 2}|\nabla G(a;z,x)|^2\biggr)^{\frac{1}{2}}\biggl(\int_{2< |z|<4}|u|^2+|\nabla{u}|^2\biggr)^{\frac{1}{2}},\\
 |\nabla u(x)|&\lesssim \biggl(\int_{|z|>2}|\nabla\nabla G(a;z,x)|^2\biggr)^{\frac{1}{2}}\biggl(\int_{|z|<4}|u-c|^2+|\nabla{u}|^2\biggr)^{\frac{1}{2}}.
\end{align*}
By the assumptions $ |x| < 1$ and $d>2$, we can appeal to (\ref{R1}) and (\ref{R2}) to respectively get
\begin{align*}
|u(x)| &\lesssim \biggl(\int_{|z|<4}|u|^2+|\nabla{u}|^2\biggr)^{\frac{1}{2}},\\
|\nabla u(x)| &\lesssim \biggl(\int_{|z|<4}|u-c|^2+|\nabla{u}|^2\biggr)^{\frac{1}{2}}.\\
\end{align*}
By choosing $c= \fint_{|z|< 4}u$ in the last estimate and applying Poincar\'e inequality in $\{|z|< 4\}$, we conclude (\ref{r.4}). To obtain (\ref{r.3}) from the second to last inequality, we use that u is $a$-harmonic in $\{|z|<8\}$ and apply the Caccioppoli's estimate 
\begin{align}\label{Ca}
\int_{|z|<4}|\nabla u|^2 \lesssim \int_{|z|<8}| u|^2.
\end{align}
This last one is proven by testing the equation $-\nabla\cdot a\nabla u=0$ with $\bar\eta^2 u$ where $\bar \eta$ cuts $\{|z| < 4\}$ in $\{|z|<8 \}$ and using\footnote{Also here, we should write $u\otimes \nabla\bar\eta$ and $u\otimes u$ instead of $u\nabla \eta$ and $u\times u$.}
\begin{align}\label{comp}
\nabla(\bar\eta u)\cdot a\nabla(\bar\eta u) &=\bar\eta^2\nabla{u}\cdot a \nabla{u}+2 \bar\eta \bigl( u \nabla\bar\eta \bigr)\cdot a \nabla u + |u|^2 \nabla\bar \eta\cdot a \nabla\bar\eta\\
&\stackrel{(\ref{Sym})}{=}\nabla(\bar\eta^2u )\cdot a \nabla u + |u|^2 \nabla\bar\eta\cdot a \nabla\bar\eta
\end{align}
together with properties (\ref{KC}) and \eqref{bdd}.

\medskip

It remains to prove that (\ref{R3}) implies the Liouville property (\ref{L}): Let us consider $a\in\Omega$ smooth and let $u$ solve $-\nabla\cdot a\nabla u=0$ in $\mathbb{R}^d$. Arguing analogously as for (\ref{r.4}) (cf. also \eqref{r.10}), (\ref{R3}) yields for every $r \leq R$
 \begin{align*}
\sup_{|z|< r}|\nabla u(z)|\lesssim R^{\frac{\alpha}{2}} \biggl(\fint_{|z|<4R}|\nabla u|^2 \biggr)^{\frac{1}{2}},
\end{align*}
and thus
 \begin{align*}
\sup_{|z|< r}|\nabla u(z)|\lesssim R^{\beta} \biggl(\fint_{|z|<4R}|\nabla u|^2 \biggr)^{\frac{1}{2}},
\end{align*}
for every $R \geq 1$ and $\beta \in [\frac \alpha 2 , 1)$. 
We now apply the rescaled Caccioppoli's Inequality (\ref{Ca}), namely
\begin{equation}\label{r.9}
 \int_{|z|< 4R}|\nabla u|^2\lesssim R^{-2}\int_{|z|< 8R}|u|^2,
\end{equation}
and get
 \begin{align*}
\sup_{|z|< r}|\nabla u(z)|\lesssim R^{-1+\beta} \biggl(\fint_{|z|<8R}| u|^2 \biggr)^{\frac{1}{2}},
\end{align*}
Therefore, the Liouville property $(L)$ is obtained by sending $R\rightarrow +\infty$ in this last inequality and by the arbitrariness of r.\\

\bigskip

{\sc Proof of Lemma \ref{BGO2}}. Using translation and scaling invariance, up to a relabelling of the domain $D$, an elementary covering argument shows that it is enough to establish 
(\ref{b210}) with radius $R$ replaced by $\frac{R}{\sqrt{d}}$. Therefore, 
it suffices to show the result with 
the inner ball $\{|x|<\frac{R}{\sqrt{d}}\}\cap D$ replaced by the cube $(- R, R)^d\cap D$ and the outer
ball $\{|x|<2R\}\cap D$ in (\ref{b29}) by the cube $(-2R,2R)^d\cap D$. By scale invariance, again up to a relabelling of D, we may reduce to
$(-\frac{\pi}{4},\frac{\pi}{4})^d \cap D$ and $(-\frac{\pi}{2},\frac{\pi}{2})^d \cap D$, respectively. Thus the proof boils down to showing that
\begin{equation}\label{b5}
\int \int_{(-\frac{\pi}{4},\frac{\pi}{4})^d}|\nabla u|^2dx \mathrm{d}\mu \lesssim
\sup_{F}{\int |F u|^2 \mathrm{d}\mu},
\end{equation}
where the supremum is taken over all linear functionals $F$ satisfying
\begin{equation}\label{b3}
|Fv|^2 <\int_{(-\frac{\pi}{2},\frac{\pi}{2})^d}|\nabla v|^2dx.
\end{equation}
with v satisfying (i) and (ii) of Definition \ref{R.6} in  $(-\frac{\pi}{2},\frac{\pi}{2})^d\cap \partial D$.

\medskip

We consider the Fourier cosine series coefficients 
\begin{equation}\label{b4}
{\mathcal F}u(k):={\textstyle\sqrt\frac{2}{\pi^d}}
\int_{(-\frac{\pi}{2},\frac{\pi}{2})^d} u(x)\Pi_{i=1}^d\cos(k_ix_i)dx
\quad\mbox{for}\;k\in\mathbb{Z}^d-\{0\}.
\end{equation}
We observe that if we show that for any even $l\in\mathbb{N}$ we have
\begin{equation}\label{b2}
\int_{(-\frac{\pi}{4},\frac{\pi}{4})^d}|\nabla u|^2dx
\lesssim\sum_{k\in\mathbb{Z}^d-\{0\}}\frac{1}{|k|^{2l}}|{\mathcal F}u(k)|^2,
\end{equation}
then we conclude (\ref{b5}). Indeed, for every $k\in \mathbb{Z}^d - \{0\}$, the linear functional $\mathcal{F}u (k)$ has the boundedness property (\ref{b3}): since $k\neq 0$, it holds for any $c\in \mathbb{R}^d$

\begin{align*}
{\mathcal F}u(k) = \int_{(-\frac{\pi}{2},\frac{\pi}{2})^d} (u(x) - c)\Pi_{i=1}^d\cos(k_ix_i)dx.
\end{align*}
Choosing $c = \fint_{(-\frac{\pi}{2},\frac{\pi}{2})^d} u$, we may apply Poincar\'e's inequality in $(-\frac{\pi}{2},\frac{\pi}{2})^d$ and thus get
\begin{equation}\nonumber
{\mathcal F}u(k) \lesssim  \int_{(-\frac{\pi}{2},\frac{\pi}{2})^d} |\nabla u(x) |^2 dx
\end{equation}
and also (\ref{b3}) by renormalising the definition of the linear functionals given by (\ref{b4}).
Hence, after integrating in $\mu$, we may reformulate (\ref{b2}) as
\begin{equation}\nonumber
\int\int_{(-\frac{\pi}{4},\frac{\pi}{4})^d}|\nabla u|^2dx \mathrm{d}\mu
\lesssim\sum_{k\in\mathbb{Z}^d-\{0\}}\frac{1}{|k|^{2l}} \int|\mathcal{F}u(k)|^2\mathrm{d}\mu.
\end{equation}
Now picking $l\in\mathbb{N}$ with $l>\frac{d}{2}$ so that 
$\sum_{k\in\mathbb{Z}^d-\{0\}}\frac{1}{|k|^{2l}}\lesssim 1$, we obtain (\ref{b5}).

\medskip
We now turn to the argument for (\ref{b2})
and introduce the abbreviation $\|\cdot\|$ for the $L^2((-\frac{\pi}{2},\frac{\pi}{2})^d)$-norm. Let $\eta$ be a cut-off function for $(-\frac{\pi}{4},\frac{\pi}{4})^d$ in
$(-\frac{\pi}{2},\frac{\pi}{2})^d$ with
\begin{equation}\label{b8}
\sum_{1 \leq |\alpha| \leq n}|\partial^\alpha\eta|\lesssim 1 .
\end{equation}
The two ingredients for (\ref{b2}) are the interpolation inequality for any function $v$ of zero spatial average and even $l\in \mathbb{N}$
\begin{eqnarray}\label{b7}
\|\eta^l v\|
&\lesssim&\|\eta^{l+1}\nabla v\|^\frac{l}{l+1}
\Big(\sum_{k\in\mathbb{Z}^d-\{0\}}\frac{1}{|k|^{2l}}|{\mathcal F}v(k)|^2\Big)^\frac{1}{2(l+1)}\nonumber\\
&&+\Big(\sum_{k\in\mathbb{Z}^d-\{0\}}\frac{1}{|k|^{2l}}|{\mathcal F}v(k)|^2\Big)^\frac{1}{2},
\end{eqnarray}
and the Caccioppoli estimate

\begin{align}\label{Ex.11}
|| \eta^{m}\nabla  u||
\lesssim \inf_{c\in\mathbb{R}^d}||\eta^{(m-n)}(u-c)||,
\end{align}
for $m \geq 2n$.\\

Before proving (\ref{b7}) and (\ref{Ex.11}) we show how to obtain (\ref{b2}) from them: We insert the Caccioppoli estimate (\ref{Ex.11}) with  $m=n^2$ in
the interpolation inequality (\ref{b7}) with $v=u- \fint_{(-\frac{\pi}{2},\frac{\pi}{2})^d} u$, $\eta$ replaced by $\eta^n$ and $l=n-1$. Appealing to Young's inequality we obtain
\begin{align*}
|| \eta^{n(n-1)} ( u - \fint_{(-\frac{\pi}{2},\frac{\pi}{2})^d} u)||^2 \lesssim \sum_{k\in\mathbb{Z}^d-\{0\}}\frac{1}{|k|^{2l}}|{\mathcal F}u(k)|^2
\end{align*}
and thus inequality (\ref{b2}) by another application of the Caccioppoli estimate (\ref{Ex.11}) and the choice of the support of $\eta$.

\medskip

To obtain the interpolation estimate (\ref{b7}), we rewrite it without Fourier transform, appealing to the representation of the Laplacian $-\triangle_N$ with Neumann boundary
conditions through the Fourier cosine series by ${\mathcal F}(-\triangle_N)w(k)=|k|^2{\mathcal F}w(k)$:
\begin{eqnarray*}
\Big(\sum_{k\in\mathbb{Z}^d-\{0\}}\frac{1}{|k|^{2l}}|{\mathcal F}v(k)|^2\Big)^\frac{1}{2}
&=&\|w\|\quad\mbox{where}\quad(-\triangle_N)^\frac{l}{2}w=v.
\end{eqnarray*}
For (\ref{b7}) it thus suffices to show that for an arbitrary function $w$  
\begin{align}\label{int0}
\|\eta^{l}\triangle^\frac{l}{2}w\|
&\lesssim \|\eta^{l+1}\nabla\triangle^\frac{l}{2}w\|^\frac{l}{l+1}
\|w\|^\frac{1}{l+1}+\|w\|.
\end{align}
It is easily seen that this family of interpolation
estimates indexed by even $l$ follows from the following two-tier family of
interpolation inequalities indexed by $m\in\mathbb{N}$
\begin{align}\label{int1}
\|\eta^{2m}\triangle^m& w\|\notag\\
&\lesssim\|\eta^{2m+1}\nabla\triangle^mw\|^\frac{1}{2}
\|\eta^{2m-1}\nabla\triangle^{m-1} w\|^\frac{1}{2}
+\|\eta^{2m-1}\nabla\triangle^{m-1} w\|
\end{align}
and
\begin{align}\label{int2}
\|\eta^{2m-1}\nabla \triangle^{m-1}& w\|\notag\\
&\lesssim \|\eta^{2m}\triangle^{m}w\|^\frac{1}{2}
\|\eta^{2m-2}\triangle^{m-1} w\|^\frac{1}{2}
+\|\eta^{2m-2}\triangle^{m-1} w\|.
\end{align}
Indeed, plugging \eqref{int2} in \eqref{int1} yields
\begin{align*}
\|\eta^{2m}\triangle^m& w\|\notag\\
&\lesssim\|\eta^{2m+1}\nabla\triangle^mw\|^\frac{1}{2}
\|\eta^{2m}\triangle^{m}w\|^\frac{1}{4}
\|\eta^{2m-2}\triangle^{m-1} w\|^\frac{1}{4}\\
& \quad + \|\eta^{2m+1}\nabla\triangle^mw\|^\frac{1}{2}\|\eta^{2m-2}\triangle^{m-1} w\|^\frac{1}{2} \\
&
\quad +\|\eta^{2m}\triangle^{m}w\|^\frac{1}{2}
\|\eta^{2m-2}\triangle^{m-1} w\|^\frac{1}{2} +\|\eta^{2m-2}\triangle^{m-1} w\|
\end{align*}
and after an application of Young's inequality
\begin{align*}
\|\eta^{2m}\triangle^m& w\|\notag\\
&\lesssim \|\eta^{2m+1}\nabla\triangle^mw\|^\frac{2}{3}\|\eta^{2m-2}\triangle^{m-1} w\|^\frac{1}{3}\\
&  \quad +\|\eta^{2m+1}\nabla\triangle^mw\|^\frac{1}{2}\|\eta^{2m-2}\triangle^{m-1} w\|^\frac{1}{2} + \|\eta^{2m-2}\triangle^{m-1} w\|\\
&=\|\eta^{2m+1}\nabla\triangle^mw\|^\frac{2}{3}\|\eta^{2m-2}\triangle^{m-1} w\|^\frac{1}{3}\\
&  \quad +\|\eta^{2m+1}\nabla\triangle^mw\|^\frac{1}{2}\|\eta^{2m-2}\triangle^{m-1} w\|^\frac{1}{4}\|\eta^{2m-2}\triangle^{m-1} w\|^\frac{1}{4} + \|\eta^{2m-2}\triangle^{m-1} w\|.
\end{align*}
We apply once more Young's inequality to the first to last term on the r.h.s and get
\begin{align*}
\|\eta^{2m}\triangle^m& w\|\notag\\
&\lesssim \|\eta^{2m+1}\nabla\triangle^mw\|^\frac{2}{3}\|\eta^{2m-2}\triangle^{m-1} w\|^\frac{1}{3} +  \|\eta^{2m-2}\triangle^{m-1} w\|.
\end{align*}
By iterating the previous estimates we conclude \eqref{int0} from \eqref{int1} and \eqref{int2}.

\medskip

Obviously, the two-tier family \eqref{int1}-\eqref{int2} reduces to the two estimates
\begin{eqnarray*}
\|\eta^{2m}\triangle v\|
&\lesssim&\|\eta^{2m+1}\nabla\triangle v\|^\frac{1}{2}
\|\eta^{2m-1}\nabla v\|^\frac{1}{2}
+\|\eta^{2m-1}\nabla v\|,\\
\|\eta^{2m-1}\nabla v\|
&\lesssim&\|\eta^{2m}\triangle v\|^\frac{1}{2}
\|\eta^{2m-2} v\|^\frac{1}{2}
+\|\eta^{2m-2} v\|,
\end{eqnarray*}
which by Young's inequality follow from
\begin{eqnarray*}
\|\eta^{2m}\triangle v\|
&\lesssim&(\|\eta^{2m+1}\nabla\triangle v\|+\|\eta^{2m}\triangle v\|)^\frac{1}{2}
\|\eta^{2m-1}\nabla v\|^\frac{1}{2},\\
\|\eta^{2m-1}\nabla v\|
&\lesssim&(\|\eta^{2m}\triangle v\|+\|\eta^{2m-1}\nabla v\|)^\frac{1}{2}
\|\eta^{2m-2} v\|^\frac{1}{2}.
\end{eqnarray*}
Thanks to (\ref{b8}) and the choice of the support of $\eta$, these two last estimates immediately follow
from integration by parts, the Cauchy-Schwarz and the triangle inequalities.

\medskip

In proving (\ref{Ex.11}) we also introduce the notation  $(\cdot, \cdot)$ for the usual $L^2$-inner product in $(-\frac{\pi}{2},\frac{\pi}{2})^d$.  We first observe that by identity (\ref{comp}) in the proof of Remark \ref{R.1} and properties (\ref{KC}) and (\ref{bdd}), it holds for any test function $\bar\eta$ in $(-\frac{\pi}{2},\frac{\pi}{2})^d$
\begin{align}\label{Ex.4a}
\int \nabla(\bar\eta^2u )\cdot a \nabla u &\gtrsim \int |\nabla(\bar \eta u )|^2 - \int |\nabla\bar\eta|^2 |u|^2\notag\\
 &\gtrsim \int \bar\eta^2|\nabla u |^2 - \int |\nabla\bar\eta|^2 |u|^2.
\end{align}
We now test (\ref{Ex.39}) with $\eta^{2m}(u-c)$; by the invariance of the equation under translations, we may fix without loss of generality $c=0$. Thanks to the cut-off function $\eta$, we obtain
\begin{align*}
\int \nabla(\eta^{2m}u )\cdot a \nabla u+&\varepsilon \sum_{i=1}^d (\partial_i^{n}\bigl(\eta^{2m}u\bigr),\partial_i^{n}u)=0,
\end{align*}
and by (\ref{Ex.4a})
\begin{align}\label{CC1}
\lambda ||\eta^m\nabla u||^2 +&\varepsilon \sum_{i=1}^d (\partial_i^{n}\bigl(\eta^{2m}u\bigr),\partial_i^{n}u)\lesssim ||\nabla\eta^{m}u||^2 \stackrel{(\ref{b8})}{\lesssim} ||\eta^{(m-1)}u||^2.
\end{align}
This last inequality implies (\ref{Ex.11}) provided that we have that for every $i= 1, ..., d$ it holds
\begin{align}\label{Ex.4}
\varepsilon ( \partial_i^{n}(\eta^{2m}u) , \partial_i^{n}u ) \gtrsim - ||\eta^{(m-n)}u||^2.
\end{align}
To simplify the notation, we drop the index $i$. We claim that to obtain (\ref{Ex.4}) it is sufficient to show that
\begin{align}\label{Ex.12a}
||\eta^{\alpha}\partial^k u|| &\lesssim  || \eta^{(\alpha+l)} \partial^{k+l}u||^{\frac{k}{k+l}} ||\eta^{(\alpha-k)}u||^{\frac{l}{k+l}} + ||\eta^{(\alpha-k)}u||.
\end{align}
Indeed, the l.h.s of (\ref{Ex.4}) can be estimated from below by
\begin{align*}
\varepsilon (\partial^{n}(\eta^{2m}u \bigr), \partial^{n}u)\gtrsim& \ \varepsilon ||\eta^{m}\partial^nu||^2- \varepsilon \sum_{i=1}^n (|\partial^{i}\eta^{2m}|| \partial^{n-i}u|, |\partial^n u|)\\
\stackrel{(\ref{b8})}{\gtrsim}& \varepsilon ||\eta^{m}\partial^nu||^2- \varepsilon \sum_{i=1}^n (\eta^{m-i}|\partial^{n-i}u|, \eta^{m}|\partial^nu|),
\end{align*}
and after an application of Cauchy-Schwarz's inequality followed by Young's inequality, by
\begin{align}\label{Ex.14}
\varepsilon (\partial^{n}(\eta^{2m}u \bigr), \partial^{n}u) \notag\\
&\gtrsim \varepsilon||\eta^{m}\partial^n u||^2  - \varepsilon C\sum_{i=1}^{n}|| \eta^{m-i}\partial^{n-i}u||^2 \notag\\
&\gtrsim  \varepsilon||\eta^{m}\partial^n u||^2 -\varepsilon C ||\eta^{(m-n)}u||^2 -\varepsilon C\sum_{i=1}^{n-1}|| \eta^{m-i}\partial^{n-i}u||^2.
\end{align}
We now may plug (\ref{Ex.12a}) with $\alpha= l= i$ and $k= n-i$ in the terms of the sum in (\ref{Ex.14})  and apply Young's inequality to conclude (\ref{Ex.4}).

\medskip

Inequality (\ref{Ex.12a}) is implied by the interpolation estimate
\begin{align}\label{Ex.30}
||\eta^{\alpha}\partial u|| \lesssim ||\eta^{(\alpha-1)}u|| + ||\eta^{(\alpha+l)}\partial^{l+1} u||^{\frac {1}{(l+1)} }||\eta^{(\alpha-1)} u||^{\frac {l}{(l+1)}},ì
\end{align}
for all $l \geq 1$, combined with an iterated application of Young's inequality, similar to the one used to infer \eqref{int0} from \eqref{int1}-\eqref{int2}. We prove (\ref{Ex.30}) by induction on l: For $l=1$, we estimate
\begin{align*}
||\eta^{\alpha}\partial u||^2 = -(\partial\eta^{2\alpha} \partial u, u)& - (\eta^{2\alpha} \partial^2 u, u)\\
&\stackrel{(\ref{b8})}{\lesssim} (\eta^{\alpha-1}|u|, \eta^\alpha |\partial u |) + ( \eta^{\alpha-1} |u|, \eta^{\alpha+1}|\partial^2 u|),
\end{align*}
and use Cauchy-Schwarz's inequality to get
\begin{align*}
||\eta^{\alpha}\partial u||^2 \lesssim ||\eta^{(\alpha-1)} u|| ||\eta^{\alpha}\partial u|| + ||\eta^{(\alpha+1)}\partial^2 u|| ||\eta^{(\alpha-1)} u||.
\end{align*}
Young's inequality yields
\begin{align*}
||\eta^{\alpha}\partial u||^2 \lesssim ||\eta^{(\alpha-1)}u||^2 + ||\eta^{(\alpha+1)}\partial^2 u|| ||\eta^{(\alpha-1)} u||,
\end{align*}
i.e. (\ref{Ex.30}) with $l=1$. Let us now assume that (\ref{Ex.30}) holds for every $i \leq l$ and $l > 1$: First using (\ref{Ex.30}) with $i=1$, we  have
\begin{align*}
||\eta^{\alpha}\partial u||^2 \lesssim  \  ||\eta^{(\alpha-1)} u||^2 &+ ||\eta^{(\alpha+1)}\partial^2 u|| ||\eta^{(\alpha-1)} u||,
\end{align*}
then, another application of (\ref{Ex.30}) with $i=l$ on $||\eta^{(\alpha+1)}\partial^2 u||$ yields
\begin{align*}
||\eta^{\alpha}\partial u||^2{\lesssim } ||\eta^{(\alpha-1)} u||^2 &+ ||\eta^{\alpha}\partial u||||\eta^{(\alpha-1)} u||\\
&+||\eta^{(\alpha+1+l)} \partial^{l+2} u||^{\frac {1}{(l+1)} }||\eta^{\alpha}\partial u||^{\frac {l}{(l+1)}}||\eta^{(\alpha-1)} u||.
\end{align*}
By Young's inequality we can absorb the term $||\eta^{\alpha}\partial u|| $ on the r.h.s and thus obtain (\ref{Ex.30}) with $l+1$ . 

\bigskip

{\sc Proof of Lemma \ref{BGO3}}.
Using translation and scaling invariance, up to a relabelling of the domain $D$, we may reduce ourselves to $R \simeq 1$. 
As in the proof of Lemma \ref{BGO2} it is convenient to work with boxes instead of balls and thus argue that for a function $u$ satisfying (\ref{Ex.39}) in the domain $\mathbb{R}^d \setminus [-\frac{\pi}{8},\frac{\pi}{8}]^d \cap D$ we have 
\begin{equation}\label{b210b}
\int \int_{\mathbb{R}^d \setminus [-\frac{\pi}{2},\frac{\pi}{2}]^d}|\nabla u|^2 \dx \mathrm{d}\mu \lesssim
\sup_{F}{\int |F u|^2 \mathrm{d}\mu},
\end{equation}
where the supremum runs over all linear functionals $F$ bounded in the sense of
\begin{equation}\label{b29b}
|Fv|^2\leq \int_{\mathbb{R}^d \setminus [-\frac{\pi}{8},\frac{\pi}{8}]^d}|\nabla v|^2 \dx,
\end{equation}
with v satisfying (i) and (ii) in the sense Definition \ref{R.6} in $\mathbb{R}^d \setminus [-\frac{\pi}{8},\frac{\pi}{8}]^d$.

\medskip

We observe that Lemma \ref{BGO2} also implies by Poincar\'e's inequality in $(-\frac{\pi}{4},\frac{\pi}{4})^d$ that for all $w$ satisfying (\ref{Ex.39a}) in $(-\frac{\pi}{2},\frac{\pi}{2})^d$
\begin{align*}
\int \inf_{c\in \mathbb{R}}{ \int_{(-\frac{\pi}{4},\frac{\pi}{4})^d} |w- c|^2 dx} d\mu \lesssim \sup_{\mathcal{F}}\int |\mathcal{F}(w)|^2 d\mu ,
\end{align*}
where the linear functionals $\mathcal{F}$ satisfy (\ref{b29a}) in $(-\frac{\pi}{2},\frac{\pi}{2})^d$.
We want to argue that a similar estimate holds also for $u$ solving (\ref{Ex.39}) in the domain $\mathbb{R}^d \setminus [-\frac{\pi}{8},\frac{\pi}{8}]^d \cap D$, namely that
\begin{align}\label{Annulus}
\int \inf_{c\in \mathbb{R}}{ \int_{(-\frac{\pi}{4},\frac{\pi}{4})^d \setminus (-\frac{\pi}{6},\frac{\pi}{6})^d} |u- c|^2 dx} d\mu \lesssim \sup_{\mathcal{F}}\int |\mathcal{F}(u)|^2 d\mu,
\end{align}
for all linear functionals satisfying (\ref{b29b}).

\medskip

Before tackling (\ref{Annulus}) we show how to conclude the proof: Thanks to (\ref{Annulus}), if we define $\mathcal{F}_0(u):= \fint_{(-\frac{\pi}{4},\frac{\pi}{4})^d \setminus (-\frac{\pi}{6},\frac{\pi}{6})^d} u$, it holds by the triangle inequality
\begin{align}\label{Ann1}
\int \int_{(-\frac{\pi}{4},\frac{\pi}{4})^d \setminus (-\frac{\pi}{6},\frac{\pi}{6})^d} |u|^2 dx d\mu \lesssim \sup_{\mathcal{F}}\int |\mathcal{F}(u)|^2 d\mu + \int |\mathcal{F}_0 u|^2 d\mu.
\end{align}
We now observe that since $d>2$, we may apply first H\"older's inequality in $(-\frac{\pi}{4},\frac{\pi}{4})^d \setminus (-\frac{\pi}{6},\frac{\pi}{6})^d$ and then Sobolev's inequality in the outer domain
$\{|x| > \frac \pi 6\}$ to infer that
\begin{align*}
|\mathcal{F}_0(u)|^2 \lesssim \bigl(\int_{(-\frac{\pi}{4},\frac{\pi}{4})^d \setminus (-\frac{\pi}{6},\frac{\pi}{6})^d} |u|^{\frac{2d}{d-2}}\bigr)^{\frac{d-2}{d}} \lesssim \int_{\mathbb{R}^d \setminus [-\frac{\pi}{8},\frac{\pi}{8}]^d } |\nabla u|^2,
\end{align*}
i.e. $\mathcal{F}_0$ satisfies (\ref{b29b}). Therefore, it follows from (\ref{Ann1}) that 
\begin{align}\label{Annulus2}
\int \int_{(-\frac{\pi}{4},\frac{\pi}{4})^d \setminus (-\frac{\pi}{6},\frac{\pi}{6})^d} |u|^2 dx d\mu \lesssim \sup_{\mathcal{F}}\int |\mathcal{F}(u)|^2 d\mu.
\end{align}
We obtain  (\ref{b210b}) by plugging in the previous estimate the Caccioppoli's inequality
\begin{align}\label{CC3}
 \int \hat\eta^{2m}|\nabla  u|^2 dx\lesssim \int  |u|^2 \mathbf{1}_{(-\frac{\pi}{4},\frac{\pi}{4})^d\setminus(-\frac{\pi}{6},\frac{\pi}{6})^d }dx
\end{align}
where $\hat\eta$ is any cut-off function for the set $\mathbb{R}^d\setminus (-\frac{\pi}{5},\frac{\pi}{5})^d$ in $\mathbb{R}^d\setminus (-\frac{\pi}{6},\frac{\pi}{6})^d$ satisfying (\ref{b8}) and $m \in\mathcal{N}$, $m > n$. We remark that this last inequality is obtained in a similar way to (\ref{Ex.11}),
this time testing the equation in (\ref{Ex.39}) with $\hat\eta^{2m} u$ which is an admissible test function since $u$ solves (\ref{Ex.39}). This time, with a more careful look, we rewrite (\ref{CC1}) as
\begin{align}\label{CC2}
\lambda ||\hat\eta^{m}\nabla u||^2 +\varepsilon \sum_{i=1}^d (\partial_i^{n}\bigl(\hat\eta^{2m}u\bigr),\partial_i^{n}u)&\lesssim ||\nabla\hat\eta^{m}u||^2 \\\notag
&{\lesssim} ||\hat\eta^{(m-1)}u \mathbf{1}_{(-\frac{\pi}{5},\frac{\pi}{5})^d\setminus(-\frac{\pi}{6},\frac{\pi}{6})^d }||^2
\end{align}
and the second term on the l.h.s of (\ref{CC2}) as
\begin{align*}
\varepsilon (\partial^{n}& (\hat\eta^{2m}u \bigr), \partial^{n}u)\gtrsim \ \varepsilon ||\hat\eta^{m}\partial^nu||^2- \varepsilon \sum_{i=1}^n (|\partial^{i}\hat\eta^{2m}|| \partial^{n-i}u|, |\partial^n u|)\\
{\gtrsim}& \varepsilon ||\hat\eta^{m}\partial^nu||^2- \varepsilon \sum_{i=1}^n (\hat\eta^{m-i}\mathbf{1}_{\mbox{supp}(\nabla\hat\eta)}|\partial^{n-i}u|, \hat\eta^{m}|\partial^nu|)\\
\gtrsim &\varepsilon||\hat\eta^{m}\partial^n u||^2  - \varepsilon C\sum_{i=1}^{n}|| \hat\eta^{m-i}\partial^{n-i}u \mathbf{1}_{\mbox{supp}(\nabla\hat\eta)}||^2 \\
\gtrsim & \varepsilon||\hat\eta^{m}\partial^n u||^2 -\varepsilon C ||\hat\eta^{(m-n)}u \mathbf{1}_{\mbox{supp}(\nabla\hat\eta)}||^2 -\varepsilon C\sum_{i=1}^{n-1}|| \hat\eta^{m-i}\partial^{n-i}u \mathbf{1}_{\mbox{supp}(\nabla\hat\eta)}||^2.
\end{align*}
We now choose another function $\bar \eta$, satisfying (\ref{b8}) and such that it cuts off the set supp$(\nabla\hat\eta)$ in  $(-\frac{\pi}{4},\frac{\pi}{4})^d\setminus(-\frac{\pi}{8},\frac{\pi}{8})^d$. We thus have for any $\alpha>0$
\begin{align}
\hat\eta^\alpha \mathbf{1}_{\mbox{supp}(\nabla\hat\eta)} \lesssim (\hat\eta \bar\eta)^\alpha \lesssim \hat\eta^\alpha,\ \ \ \ \ \ \mbox{supp}(\bar\eta\hat\eta) \subset (-\frac{\pi}{4},\frac{\pi}{4})^d\setminus(-\frac{\pi}{6},\frac{\pi}{6})^d\label{CC4}
\end{align}
Therefore, we may bound
\begin{align*}
\varepsilon (\partial^{n}& (\hat\eta^{2m}u \bigr), \partial^{n}u)\\
\gtrsim & \varepsilon||\hat\eta^{m}\partial^n u||^2 -\varepsilon C ||(\hat\eta \bar\eta)^{(m-n)}u ||^2 -\varepsilon C\sum_{i=1}^{n-1}|| (\hat\eta \bar \eta)^{m-i}\partial^{n-i}u||^2.
\end{align*}
We now may apply the interpolation inequality (\ref{Ex.12a}) with $\eta= \hat\eta\bar\eta$ to the second and third term on the r.h.s. and appeal to (\ref{CC4}) to estimate
and conclude that
\begin{align*}
\varepsilon (\partial^{n}& (\hat\eta^{2m}u \bigr), \partial^{n}u)\gtrsim -\varepsilon ||\hat\eta^{(m-n)}u  \mathbf{1}_{(-\frac{\pi}{4},\frac{\pi}{4})^d\setminus(-\frac{\pi}{6},\frac{\pi}{6})^d}||,
\end{align*}
so that the Caccioppoli's inequality (\ref{CC3}) follows by the previous inequality and (\ref{CC2}).

\medskip

It thus remains only to prove inequality (\ref{Annulus}): We fix a $\tilde \eta$ to be a cut-off function of $\mathbb{R}^d \setminus (-\frac{\pi}{6},\frac{\pi}{6})^d$ in $\mathbb{R}^d \setminus (-\frac{\pi}{8},\frac{\pi}{8})^d$ and set 
$$
\tilde u:= \tilde\eta (u- \fint_{(-\frac{\pi}{2},\frac{\pi}{2})^d \setminus (-\frac{\pi}{8},\frac{\pi}{8})^d}u).
$$
In the same spirit of Lemma \ref{BGO2}, we define for $k\in\mathbb{Z}^d-\{0\}$
\begin{align}\label{Fourier}
\tilde{ \mathcal F}u(k)&:={\textstyle\sqrt\frac{2}{\pi^d}}
\int_{(-\frac{\pi}{2},\frac{\pi}{2})^d} \tilde\eta (u- \fint_{(-\frac{\pi}{2},\frac{\pi}{2})^d \setminus (-\frac{\pi}{8},\frac{\pi}{8})^d}u)\Pi_{i=1}^d\cos(k_ix_i)dx\notag\\
&={\textstyle\sqrt\frac{2}{\pi^d}}\int_{(-\frac{\pi}{2},\frac{\pi}{2})^d} \tilde u \ \Pi_{i=1}^d\cos(k_ix_i)dx = \mathcal{F}(\tilde u)(k),
\end{align}
with $\mathcal{F}(\tilde u)(k)$ the k-th coefficient of the cosine Fourier series of  the function $\tilde u$ defined in (\ref{b4}). We first note that  the functional $\tilde{\mathcal{F}}(u)(k)$ satisfies (\ref{b29b}) for all $k \in \mathbb{Z}^d \setminus \{0 \}$: Indeed, similarly to the proof of Lemma \ref{BGO2} we may write
\begin{align*}
|\tilde{ \mathcal F}u(k)|^2= \bigl({\textstyle\sqrt\frac{2}{\pi^d}}\int_{(-\frac{\pi}{2},\frac{\pi}{2})^d} (\tilde u - \fint_{(-\frac{\pi}{2},\frac{\pi}{2})^d \setminus (-\frac{\pi}{8},\frac{\pi}{8})^d}\tilde u)\Pi_{i=1}^d\cos(k_ix_i)dx\bigr)^2, 
\end{align*}
so that first Cauchy-Schwarz's inequality, then Poincar\'e's inequality and the definition of $\tilde u$ yield
\begin{align*}
|\tilde{ \mathcal F}u(k)|^2&\lesssim  \int_{(-\frac{\pi}{2},\frac{\pi}{2})^d}|\nabla \tilde u|^2 dx\\
& \lesssim  \int_{(-\frac{\pi}{2},\frac{\pi}{2})^d\setminus(-\frac{\pi}{8},\frac{\pi}{8})^d } |\nabla u|^2 dx + \int_{(-\frac{\pi}{2},\frac{\pi}{2})^d\setminus(-\frac{\pi}{8},\frac{\pi}{8})^d }\hspace{-0.8cm}|u -\fint_{(-\frac{\pi}{2},\frac{\pi}{2})^d \setminus (-\frac{\pi}{8},\frac{\pi}{8})^d}u |^2 dx
\end{align*}
Another application of Poincar\'e's inequality on the second term of the r.h.s implies (\ref{b29b}). Therefore, analogously to Lemma \ref{BGO2}, if we show that for any even $l\in\mathbb{N}$ we have
\begin{equation}\label{Annulus3}
\inf_{c\in \mathbb{R}}\int_{(-\frac{\pi}{4},\frac{\pi}{4})^d \setminus (-\frac{\pi}{6},\frac{\pi}{6})^d}|u -c|^2dx
\lesssim\sum_{k\in\mathbb{Z}^d-\{0\}}\frac{1}{|k|^{2l}}|{\tilde{\mathcal F}}u(k)|^2,
\end{equation}
we may conclude (\ref{Annulus}) by integrating in $d \mu$ and taking $l > \frac d 2$. 

\medskip

To show (\ref{Annulus3}), we first apply the interpolation inequality (\ref{b7}) to the function $\tilde u - \fint_{(-\frac{\pi}{2},\frac{\pi}{2})^d} \tilde u$ and observe that, if we choose as cut-off $\eta$ in (\ref{b7}) any smooth function cutting
$ (-\frac{\pi}{4},\frac{\pi}{4})^d \setminus (-\frac{\pi}{5},\frac{\pi}{5})^d$ in $ (-\frac{\pi}{2},\frac{\pi}{2})^d \setminus (-\frac{\pi}{6},\frac{\pi}{6})^d$, then it holds by our choice of $\tilde u$ that
\begin{align*}
 \inf_{c \in \mathbb{R}}\int \eta^{2l} |u - c|^2 dx &\lesssim \bigl( \int \eta^{2(l+1)}|\nabla u|^2 \bigr)^\frac{l}{l+1}
\Big(\hspace{-0.1cm}\sum_{k\in\mathbb{Z}^d-\{0\}}\frac{1}{|k|^{2l}}|{\mathcal F}\tilde u(k)|^2\Big)^\frac{1}{(l+1)}\hspace{-0.2cm}\\
&+\hspace{-0.1cm}\sum_{k\in\mathbb{Z}^d-\{0\}}\frac{1}{|k|^{2l}}|{\mathcal F}\tilde u(k)|^2\\
&\stackrel{(\ref{Fourier})}{=} \bigl( \int \eta^{2(l+1)}|\nabla u|^2 \bigr)^\frac{l}{l+1}
\Big(\hspace{-0.1cm}\sum_{k\in\mathbb{Z}^d-\{0\}}\frac{1}{|k|^{2l}}|\tilde{\mathcal F} u(k)|^2\Big)^\frac{1}{(l+1)}\hspace{-0.2cm}\\
&+\hspace{-0.1cm}\sum_{k\in\mathbb{Z}^d-\{0\}}\frac{1}{|k|^{2l}}|\tilde{\mathcal F} u(k)|^2.
\end{align*}
Since our choice of $\eta$ implies that the Caccioppoli inequality (\ref{Ex.11}) holds also for $u$, we may buckle up the previous estimate and get (\ref{Annulus3}).

\bigskip

{\sc Poincar\'e-Sobolev's inequality (\ref{PSI})}. Without loss of generality, we assume that $R=1$ and $z=0$. We start by observing that (\ref{PSI}) immediately follows if we combine the estimate
\begin{align}\label{PSI1}
\bigl( \int_{|x|> 1} |v|^p \bigr)^{\frac 1 p} \lesssim \bigl( \int_{|x|> 1} |\nabla v|^2 \bigr)^{\frac 1 2 } + \bigl(\int_{|x|> 1} |v|^2\bigr)^\oh
\end{align}
for every $2 \leq p < +\infty$ and $v\in W^{1,2}(\mathbb{R}^2)$, with the Poincar\'e's inequality
\begin{align}\label{PSI3}
\bigl( \int_{|x|> 1} |w|^2 \bigr)^{\frac 1 2} \lesssim_D  \bigl( \int_{|x|> 1} |\nabla w|^2 \bigr)^{\frac 1 2 }
\end{align}
for $w \in W^{1,2}(\mathbb{R}^2)$ which vanishes outside $D$.

\medskip

By standard approximation, we may assume in both inequalities that $w$ and $v$ are $C^1(\mathbb{R}^2)$. We first show (\ref{PSI1}): Thanks to Young's inequality and by interpolation it is enough to prove that for every integer $p \geq 1$
\begin{align}\label{PSI4}
\bigl( \int_{|x| > 1} |v|^{2p} \bigr)^{\frac{1}{2p}} \lesssim \bigl( \int_{|x|> 1} |\nabla v|^2 \bigr)^{\frac{p-1}{2p}} \bigl( \int_{|x|>1} |v|^2 \bigr)^{\frac{1}{2p}}.
\end{align}
This, in turn, is implied by the same inequality on the whole space $\mathbb{R}^2$
\begin{align}\label{PSI5}
\bigl( \int  |v|^{2p} \bigr)^{\frac{1}{2p}} \lesssim \bigl( \int |\nabla v|^2 \bigr)^{\frac{p-1}{2p}} \bigl( \int  |v|^2 \bigr)^{\frac{1}{2p}}.
\end{align}
Indeed, arguing as for the Sobolev's inequality on outer domains, if $v$ is defined in $\{|x| >1 \}$, we may extend it to the whole space by the radial reflection $x \rightarrow \frac{x}{|x|^2}$, apply (\ref{PSI5}) and controlling
\begin{align*}
\int_{|x|<1} |v|^2 + |\nabla v|^2 \lesssim   \int_{|x|> 1} |v|^2 + |\nabla v|^2,
\end{align*}
thanks to our choice of extension.\\
Inequality (\ref{PSI5}) for $p=1$ is trivial; We thus show it for every $p \geq 2$ by induction. For $p=2$, the Isoperimetric Inequality (\cite{EG}, Section 5.6.1. Theorem 1) applied to $v^2$ and the chain rule yield
\begin{align*}
\bigl( \int |v|^4 \bigr)^{\frac 1 2} \lesssim \int |\nabla(v^2)| \lesssim \int |v| |\nabla v|,
\end{align*}
so that after an application of H\"older's inequality this turns into
\begin{align*}
 \bigl( \int |v|^4 \bigr)^{\frac 1 2} \lesssim \bigl(\int |v|^2 \bigr)^{\frac 1 2}\bigl( \int|\nabla v|^2 \bigr)^{\frac 1 2},
\end{align*}
i.e. estimate (\ref{PSI5}) for $p=2$. Let us now assume that (\ref{PSI5}) holds for $p \geq 2$. Appealing to the Isoperimetric Inequality applied this time to $v^{p+1}$, we obtain by the chain rule and H\"older's inequality that
\begin{align*}
\bigl( \int |v|^{2(p+1)} \bigr)^{\frac 1 2} \lesssim  \bigl(\int |v|^{2p} \bigr)^{\frac 1 2}\bigl( \int|\nabla v|^2 \bigr)^{\frac 1 2}.
\end{align*}
Plugging (\ref{PSI5}) with $p$ in the previous inequality it follows
\begin{align*}
\bigl( \int |v|^{2(p+1)} \bigr)^{\frac 1 2} &\lesssim \bigl( \int |\nabla v|^2 \bigr)^{\frac{p-1}{2}} \bigl( \int  |v|^2 \bigr)^{\frac{1}{2}}\bigl( \int|\nabla v|^2 \bigr)^{\frac 1 2}\\
&\lesssim \bigl( \int |\nabla v|^2 \bigr)^{\frac{p}{2}} \bigl( \int  |v|^2 \bigr)^{\frac{1}{2}}.
\end{align*}
This establishes (\ref{PSI5}) for $p+1$ and thus concludes the proof of (\ref{PSI4}).

\medskip

We now turn to Poincar\'e's inequality (\ref{PSI3}): Without loss of generality, we prove (\ref{PSI3}) on the cylindrical domain $(-1, 1) \times \mathbb{R}$.
In the following estimates we write $x =(x_1, x_2) \in (-1, 1) \times  \mathbb{R}$. Thanks to the geometry of the domain and our assumption on $w$, for every $(x_1, x_2) \in (-1, 1) \times \mathbb{R} \setminus \{|x| <1\}$, there exists
a $x_1' \in (-1, 1)$ such that $w(x_1', x_2) =0$. We may thus bound
\begin{align*}
 | w(x_1, x_2)|^2 &= |w(x_1, x_2) - w(x_1', x_2)|^2 \\
 &\lesssim | \int_{x_1}^{x_1'} \nabla w( z, x_2) dz |^2 \lesssim \int_{(-1, 1)}|\nabla w( z, x_2)|^2 dz, 
\end{align*}
where in the last inequality we applied H\"older's inequality in $z$ and the fact that $x_1, x_1' \in (-1, 1)$. Therefore, estimate (\ref{PSI3}) follows once we integrate the previous inequality in $\{|x| >1\}$ and appeal again to the Dirichlet boundary conditions.

\bigskip

{\sc Proof of Lemma \ref{exp}.} \ Throughout this proof, we use the same notation of Corollary \ref{Cor1}. We recall that $\lesssim_D$ stands for $\leq C$ with a constant depending on d, $\lambda$ and the smallest bounded direction of D.
Moreover, without loss of generality, we may reduce ourselves to work in a cylindrical domain \mbox{$D = I \times \mathbb{R}^{d-1}$} with $I \subset \mathbb{R}$ bounded. Therefore since in this setting the implicit multiplicity constant in \eqref{Ex.32} depends on $D$ through the size of $I$, we may substitute the notation $\lesssim_D$ with $\lesssim_I$.
We start by observing that it is sufficient to prove that there exists a constant $C_0$ such that for every $i=1,...,d-1$
\begin{align}
\int \exp{\bigl( -\frac{x'_i-x_{0,i}'}{C_0} \bigr)} |\nabla u|^{2} \lesssim_I \int \exp{\bigl( -\frac{x'_i-x_{0,i}'}{C_0} \bigr)} |g|^2,\label{exp00}\\
\int \exp{\bigl( \frac{x'_i-x_{0,i}'}{C_0} \bigr)} |\nabla u|^{2} \lesssim_I \int \exp{\bigl( \frac{x_i'-x_{0,i}'}{C_0} \bigr)} |g|^2, \label{exp01}
\end{align}
where $x'=( x'_1, .., x_{d-1}')$ and $x'_0=( x'_{0,1}, .., x_{0,d-1}')$. Indeed, from \eqref{exp00}-\eqref{exp01} it follows that
\begin{align*}
\int \sum_{i=1}^{d-1} \exp{\bigl( \frac{|x'_i-x_{0,i}'|}{C_0} \bigr)} |\nabla u|^{2} \lesssim_I \int \sum_{i=1}^{d-1} \exp{\bigl( \frac{|x'_i-x_{0,i}'|}{C_0} \bigr)} |g|^2
\end{align*}
and thus also \eqref{Ex.32} thanks to the convexity of the function $\exp( \frac{|s|}{C_0} )$ and the equivalence of the norms in $\mathbb{R}^{d-1}$. 

\medskip

Without loss of generality, we fix $x_0'=0$ in \eqref{exp00}-\eqref{exp01}. We start with the argument for (\ref{exp00}) for a fixed i, say $i=1$. As $D$ is unbounded, we are not allowed a priori to test the equation before (\ref{Ex.32}) with $\exp{\bigl( \pm \frac{x'_1}{C_0} \bigr)} u$. We first need to consider the following approximate problem: For every $M \in \mathbb{N}$, let $ D_M:= I \times (-M ; +\infty )^{d-1}$ and $u_M$ be the solution of
\begin{align}\label{PM}
\begin{cases}
 -\nabla\cdot a\nabla u_M = \nabla\cdot g_M \mbox{\ \ \  in \  $D_M$}\\
 \ \ u_M= 0  \hspace{2.35cm} \mbox{in \  $\partial D_M $,}
\end{cases}
\end{align}
where $g_M(x_1, x') = \mathcal{X}_{(-(M-1), +\infty)^{d-1}}(x')g(x_1, x')$. 
We start by showing that if we prove (\ref{exp00}) for every $u_M$, namely
\begin{align}\label{exp002}
\int \exp{\bigl( -\frac{x'}{C_0} \bigr)} |\nabla u_M|^{2} \lesssim_I \int \exp{\bigl( -\frac{x'}{C_0} \bigr)} |g_M|^2,
\end{align}
then we can conclude (\ref{exp00}).
Indeed, thanks to the geometry of the domain and the Dirichlet boundary conditions in $\partial I \times (-M; +\infty)^{d-1}$  for every $x' \in (-M; +\infty)^{d-1}$ it holds by Poincar\`e's inequality in $I$ that
\begin{align}\label{Poi}
\int_D |u(x_1, x')|^2 d x_1 \leq |I|^2 \int |\nabla_{x_1} u(x_1, x')|^2.
\end{align}
 Therefore, by integrating over $x'\in\mathbb{R}$, it follows
\begin{align*}
\int_{D} |u_M|^2 \leq |I|^2 \int |\nabla u_M|^2 \lesssim_I \int |\nabla u_M|^2 ,
\end{align*}
and by the energy estimate and the definition of $g_M$,
\begin{align*}
\int_{D} |u_M|^2 \lesssim_I \int |\nabla u_M|^2 \lesssim_I \int |g_M|^2 \lesssim_I \int |g|^2
\end{align*}
Thus, $\{ u_M \}_{M \in \mathbb{N}}$ is uniformly bounded in $W^{1,2}(\mathbb{R}^d)$ and, up to subsequences, weakly converges to u, the solution of the equation before (\ref{Ex.32}). 
In addition, since by construction $D_1 \subset \dots \subset D_N \subset D_{N+1} \subset \dots \subset D$, from (\ref{exp002}) it holds that for every $N, M \in \mathbb{N}$ with $N \leq M$
\begin{align*}
\int_{D_N}\exp{\bigl( -\frac{x'_1}{C_0} \bigr)} |\nabla u_M|^2 \lesssim_I \int \exp{\bigl( -\frac{x'_1}{C_0} \bigr)} |g|^2.
\end{align*}
If we send $M \rightarrow +\infty$ we obtain by weak lower semi-continuity
\begin{align*}
\int_{D_N}\exp{\bigl( -\frac{x'_1}{C_0} \bigr)} |\nabla u|^2 \lesssim_I \int \exp{\bigl( -\frac{x'_1}{C_0} \bigr)} |g|^2.
\end{align*}
Taking also the limit $N\rightarrow +\infty$, we conclude (\ref{exp00}).

\medskip

We turn now to the argument for (\ref{exp002}): In the rest of the proof, we write $u, g$ respectively for $u_M$ and $g_M$.
By multiplying the inequality (\ref{Poi}) with $\rho(x'_1):= \exp{\bigl( -\frac{x'_1}{C_0}\bigr)}$, $C_0 > 0$, and then integrating over $x'\in \mathbb{R}^{d-1}$, we obtain
\begin{align}\label{exp1}
\int_{D} \rho |u|^2\lesssim_I \int_{D} \rho|\partial_{x_1} u|^2 \lesssim_I \int_{D} \rho|\nabla u|^2,
\end{align}
which implies, being $\nabla u \in L^2(D)$ and $\rho \in L^\infty (D_M)$, that also $u$ and $\rho u \in L^2(D_M)$. Therefore, we can test the equation in (\ref{PM}) with $\rho u$ and use assumption (\ref{StE}) on $a\in \Omega$ to bound
\begin{align}\label{exp3}
\lambda \int \rho |\nabla u|^2 &\leq \int\rho \nabla u \cdot a \nabla u \notag\\
&\stackrel{(\ref{bdd})}{\lesssim} 
 \int |u|\ |\partial_{x'_1}\rho||\nabla u| + \int \rho |\nabla u| | g| + \int |u|\ |\partial_{x'_1}\rho||g |.
\end{align}
By our choice of $\rho$, we thus obtain
\begin{align}
\int \rho |\nabla u|^2 \lesssim \frac{1}{C_0} \int \rho |u| |\nabla u| + \frac{ 1}{C_0} \int \rho |u| |g | + \int \rho |\nabla u| | g|,
\end{align}
so that H\"older's inequality in every term of the r.h.s yields
\begin{align}\label{exp4}
\int \rho |\nabla u|^2 \lesssim & \ \frac{1}{C_0}\biggl( \int \rho|u|^2 \biggr)^{\frac 1 2}\biggl( \int \rho|\nabla u|^2 \biggr)^{\frac 1 2}\\
&+ \frac{1}{C_0} \biggl(\int \rho |u|^2 \biggr)^{\frac 1 2} \biggl( \int \rho |g |^2 \biggr)^{\frac 1 2} + \biggl( \int \rho|\nabla u|^2  \biggr)^{\frac 1 2}\biggl( \int \rho| g|^2 \biggr)^{\frac 1 2}. \notag
\end{align}
Appealing to (\ref{Poi}), (\ref{exp4}) turns into
\begin{align*}
\int \rho |\nabla u|^2 \lesssim_I & \ \frac {1}{ C_0 } \int \rho|\nabla u|^2 \notag\\
&+ \frac{1}{ C_0 }\biggl(\int \rho |\nabla u|^2 \biggr)^{\frac 1 2} \biggl( \int \rho |g |^2 \biggr)^{\frac 1 2} + \biggl( \int \rho|\nabla u|^2  \biggr)^{\frac 1 2}\biggl( \int \rho| g|^2 \biggr)^{\frac 1 2}.
\end{align*}
Therefore, after an application of Young's inequality in the last two terms on the r.h.s., we may choose $C_0 > C(\lambda,|I|)$, absorb the term $\int \rho|\nabla u|^2$ and thus establish (\ref{exp002}).

\medskip

Inequality (\ref{exp01}) follows similarly by considering an approximate problem in $D\times (-\infty; M)^{d-1}$ and $\rho= \exp{\bigl( \frac{x'}{C_0} \bigr)}$.


\begin{thebibliography}{9}
\bibitem{aronson}
D.~Aronson,
\newblock {\em Bounds for the fundamental solution of a parabolic equation},
\newblock Bull. Am. Math. Soc. {\bf 73} (1967), 890-896.
\bibitem{at}
P.~Auscher and Ph.~Tchamitchian,
\newblock{\em Gaussian estimates for second order elliptic divergence operators on Lipschitz and $C^1$ domains},
\newblock Evolution equations and their applications in physical and life sciences (Bad Herrenalb, 1998),
15–32, Lecture Notes in Pure and Appl. Math., 215, Dekker, New York, 2001.
\bibitem{AL1}
M.~Avellaneda and F.-H.~Lin,
\newblock {\em Compactness methods in the theory of homogenization},
\newblock  Comm. Pure Appl. Math. {\bf 40} (1987), no. 6, 803–847. MR 910954 (88i:35019).
\bibitem{BG}
P.~Bella and A.~Giunti,
\newblock{\em Green's function for elliptic systems : moment bounds},
\newblock arXiv:1512.01029
\bibitem{BGO}
P.~Bella, A.~Giunti, and F.~Otto,
\newblock{\em Quantitative stochastic homogenization: local control of homogenization error through corrector}, 
\newblock  arXiv:1504.02487, to appear in IAS/ Park City Mathematics Series.
\bibitem{br2}
D.~ Brydges,
\newblock{\em Lectures on the renormalisation group},
\newblock  Statistical Mechanics, pp. 7-93, IAS/Park City Math. Ser., {\bf 16}, Amer. Math. Soc., Providence 2009. 
\bibitem{cdk}
S.~Cho, H.~Dong and S.~ Kim,
\newblock{\em Global estimates for Green’s matrix of second order parabolic systems with
application to elliptic systems in two dimensional domains},
\newblock Potential Analysis {\bf 36} (2012), (2) 339-372
\bibitem{cn2}
J.~Conlon and A.~Naddaf,
\newblock {\em Greens Functions for Elliptic and Parabolic Equations with
Random Coefficients}, 
\newblock New York J. Math. {\bf 6} (2000), 153-225.
\bibitem{cs}
J.~Conlon and T.~Spencer,
\newblock {\em Strong Convergence to the  homogenized limit of elliptic equations with random coefficients},
\newblock Trans. Amer. Math. Soc.  {\bf 366} (2014), 1257-1288.
\bibitem{DeG}
E.~De Giorgi,
\newblock{\em Un esempio di estremali discontinue per un problema variazionale di tipo ellittico},
\newblock Boll. UMI {\bf 4} (1968), 135-137.
\bibitem{dd}
T.~Delmotte and J.~Deuschel,
\newblock {\em On estimating the derivatives of symmetric diffusions in
 stationary random environment,  with applications to $\nabla\phi$ interface model},
\newblock Probab. Theory Relat. Fields {\bf 133}, 358-390 (2005).
\bibitem{dm}
G.~Dolzmann and S.~M\"uller,
\newblock{\em Estimates for Green’s matrices of elliptic systems by $L^p$-theory},
\newblock Manuscripta Math. {\bf 88} (1995), no. 2, 261–273.
\bibitem{EG}
L.C.~Evans and R.F.~Gariepy,
\newblock {\em Measure theory and fine properties of functions},
\newblock CRC Press (1992).
\bibitem{FO}
J.~Fischer , F.~Otto,
\newblock{\em A higher-order large-scale regularity theory for random elliptic operators},
\newblock{ arXiv:1503.07578 .}
\bibitem{f}
M. ~Fuchs, 
\newblock{ \em The Green matrix for strongly elliptic systems of second order with continuous coefficients},
\newblock Z. Anal. Anwendungen {\bf 5} (1986), no. 6, 507–531.
\bibitem{gk}
K.~Gawedzki and A.~Kupiainen,
\newblock {\em Lattice Dipole Gas and $(\nabla\phi)^4$ Models at Long Distances: Decay of Correlations and Scaling Limit},
\newblock Comm. Math. Phys. {\bf 92} (1984), 531-553.
\bibitem{go1}
A.~ Gloria and F.~Otto,
\newblock {\em An optimal variance estimate in stochastic homogenization of discrete elliptic equations},
\newblock  Ann. Probab. {\bf 39} (2011), 779-856.
\bibitem{GO}
A.~ Gloria and F.~Otto,
\newblock {\em Quantitative results on the corrector equation in Stochastic Homogenization},
\newblock arXiv : 1409.0801.
\bibitem{GNO}
A.~Gloria, S.~Neukamm and F.~Otto,
\newblock{\em An optimal quantitative two-scale expansion in stochastic homogenization of discrete elliptic equations},
\newblock  ESAIM / Mathematical modelling and numerical analysis, 48 (2014) {\bf 2}, p. 325-346 ;
\bibitem{gw}
M.~Gr\"uter and K.O.~Widman,
\newblock{\em The Green function for uniformly elliptic equations},
\newblock Manuscr. math. {\bf 37} (1982), 303-342.
\bibitem{hk}
S.~Hofmann and  S.~Kim,
\newblock{\em The Green function estimates for strongly elliptic systems of second order},
\newblock Manuscripta Math. {\bf 124} (2007), no. 2, 139-172.
\bibitem{kk}
K.~Kang and S.~Kim,
\newblock{\em Global pointwise estimates for Green's matrix of second order elliptic systems},
\newblock Journal of Diff. Equations {\bf 249} (2010), no. 11, 2643-2662.
\bibitem{lsw}
W.~Littman, G.~Stampacchia and H.F. ~Weinberger,
\newblock{\em Regular points for elliptic equations with discontinuous coefficients},
\newblock Ann. Scuola Norm. Sup. Pisa (III) {\bf 17} (1963), 43-77.
\bibitem{MO}
D.~Marahrens and F.~Otto,
\newblock{\em Annealed estimates on the Green's function},
\newblock Probability theory and related fields, {\bf 163} (2014) 3, 527-573.
\bibitem{ns}
A.~Naddaf and T.~Spencer,
\newblock {\em On homogenization and scaling limit of some gradient
perturbations of a massless free field},
\newblock Comm. Math. Phys. {\bf 183} (1997), 55-84. 
\bibitem{ns2}
A.~Naddaf and T.~Spencer,
\newblock{\em Estimates on the variance of some homogenization problems}
\newblock Preprint, 1998.
\bibitem{O}
 F. ~Otto, \newblock{ \em Evolution of microstructure : an example},
\newblock { Ergodic theory, analysis, and efficient simulation of dynamical systems. / Bernd Fiedler (ed.)
Berlin [u. a.] : Springer (2001), 501-522.}
\bibitem{PS}
L. C.~Piccinini and S.~Spagnolo,
\newblock{\em On the H\"older continuity of solutions of second order elliptic
equations in two variables.}
\newblock{ Ann. Scuola Norm. Sup. Pisa \textbf{(3)}, 26:391–402, 1972}
\bibitem{sp}
T.~Spencer,
\newblock  Scaling, the free field and statistical mechanics, in
\newblock {\em The Legacy of Norbert Wiener: A Centennial Symposium} (Cambridge, MA, 1994).
\newblock {\em Proc. Sympos. Pure Math.} Vol. 60 (Amer. Math. Soc., Providence, 1997).
\end{thebibliography}
\end{document}